\newcommand{\GG}[1]{}
\providecommand{\PhaseMatrix}[1]{\begin{bmatrix} \cos({#1}) & -\sin({#1})\,\mhDelta^{-\frac12} \\ \sin({#1}) \mhDelta^\frac12 & \cos({#1}) \end{bmatrix}}
\newcommand{\Xplus}{{X^{\texttt{+}}}}
\newcommand{\Xminus}{{X^{\texttt{-}}}}
\newcommand{\xplus}{{x^{\texttt{+}}}}
\newcommand{\xminus}{{x^{\texttt{-}}}}
\newcommand{\Tcal}{{\mathcal{T}}}
\newcommand{\Mrom}{\boldsymbol{\mathrm{M}}}
\DeclareMathOperator{\dist}{d}
\newcommand{\PhaseSpace}{{\Hcaldot^{1/2}}}
\newcommand{\fboldstar}{\fbold_{\!\star}}
\newcommand{\fboldstarprime}{\fbold_{\!\star}'}
\newcommand{\fboldstartheta}{{{\Bd f}_{\theta}}}
\newcommand{\vstartheta}{v_{\theta}}
\newcommand{\vboldstartheta}{{{\Bd v}_{\theta}}}
\newcommand{\Fbold}{{\Bd F}}
\newcommand{\fboldbot}{{{\Bd f}_\bot}}
\newcommand{\fboldbotprime}{{\Bd{f}'_\bot}}
\newcommand{\mboldbot}{{{\Bd m}_\bot}}
\newcommand{\gboldbot}{{{\Bd g}_\bot}}
\newcommand{\hboldbot}{{{\Bd h}_\bot}}
\newcommand{\mbold}{{\Bd{m}}}
\newcommand{\wbold}{{\Bd{w}}}
\newcommand{\tautilde}{{\tilde{\tau}}}
\newcommand{\xitilde}{{\tilde{\xi}}}
\newcommand{\Wtilde}{{\tilde{W}}}
\DeclareMathOperator{\Span}{span}
\newcommand{\utildebold}{{\tilde{\ubold}}}
\newcommand{\obold}{{\Bd{0}}}
\newcommand{\smallpar}{\delta}
\newcommand{\Lfourthree}{{L^4(\R^{1+3})}}
\newcommand{\RPolar}{R}
\newcommand{\mhDelta}{\Tonde{-\Delta}}
\newcommand{\sqrtDelta}{\sqrt{-\Delta}}
\newcommand{\Lfour}{{L^4(\R^{1+3})}}
\newcommand{\Gammaalpha}{{\Gamma_{\!\alphabold}}}
\newcommand{\sigmabold}{\Bd{\sigma}}
\newcommand{\Arom}{{\mathrm{A}}}
\newcommand{\Pbold}{{\Bd P}}
\title[Asymptotics of the Strichartz norm for nonlinear waves]{A sharp Lorentz-invariant Strichartz norm expansion for the cubic wave equation in $\R^{1+3}$ }
\author{Giuseppe Negro}
\thanks{Supported by the MINECO grants SEV-2011-0087, SEV-2015-0554, SEV-2017-0718 and MTM2017-85934-C3-1-P, by the ERCEA Adv.\,Grant 2014 669689-HADE, by the BERC 2018-2021 program, and by the LAGA, Université~Paris 13.}
\address{BCAM - Basque Center for Applied Mathematics
}
\email{gnegro@bcamath.org}
\date{\today}
\begin{document}

\begin{abstract} 
We provide an asymptotic formula for the maximal Strichartz norm of small solutions to the cubic wave equation in Minkowski space. The leading coefficient is given by Foschi's sharp constant for the linear Strichartz estimate. We calculate the constant in the second term, which differs depending on whether the equation is focussing or defocussing. The sign of this coefficient also changes accordingly. 

\end{abstract}

\maketitle

\section{Introduction}
Considering solutions $v$ to the linear wave equation $\partial^2_{t}v=\Delta v$ in Minkowski space, 
Foschi \cite{Foschi07} found the best constant $\Scal_0=\frac{3}{16\pi}$ in the Strichartz inequality 
\begin{equation}\label{eq:StrichartzIneq}
	\norm{v}_{L^4(\R^{1+3})}^4 \le \Scal_0\norm{\vbold(t)}_{\Hcaldot^{1/2}(\R^3)}^4,
\end{equation} 
where $$\vbold(t)=\big(v(t), \partial_t v (t)\big)\quad  \text{and}\quad \Hcaldot^{1/2}(\R^3)=\Hdot^{1/2}\times \Hdot^{-1/2}(\R^3).$$ He also characterized the maximizers via symmetries of the inequality, including Lorentzian boosts.

Here we consider real-valued, global solutions $u$ to  the cubic equation
\begin{equation}\label{eq:cubic_NLW}\tag{$\mathrm{NLW}$}
	\begin{array}{cc}
   \partial^2_tu - \Delta u = \sigma u^3,\quad \text{on} \ \R^{1+3}, \\ 
   	\end{array}
\end{equation} 
where $\sigma\ne 0$. This equation is locally well-posed in $\PhaseSpace$, and small solutions are global; see the second section. It is well-known, and we present a proof in Appendix~B, that $\norm{\ubold(t)}_{\Hcaldot^{1/2}}$ is neither conserved in time, nor invariant under Lorentzian boosts. This has proved to be a fundamental obstruction; see~\cite{Do18, DL} .

In order to circumvent these difficulties, we consider
\begin{equation}\label{eq:I_functional}
  I(\delta) = \sup \Big \{\ \norm{u}_{L^4(\R^{1+3})}^4 \ \Big|\  \lim_{t\to-\infty} \norm{\ubold(t)}_{\Hcaldot^{1/2}(\R^3)}\le\delta \ \Big\},
\end{equation}
which is manifestly invariant under translations in time, and  we will prove in the third section that this is also invariant under Lorentzian boosts. 

Our main concern thereafter, will be the proof of  the following  sharp asymptotic estimate.
\begin{theorem}\label{thm:MainThree}
Let $\Scal_0=\frac{3}{16\pi}$ and let $\delta>0$ be sufficiently small.  Then the supremum in~\eqref{eq:I_functional} is attained and
    \begin{equation}\label{eq:MainThreeFormula} 
      I(\delta)= \Scal_0 \delta^4 +\sigma \Scal_1\delta^6 + O(\delta^8),
    \end{equation}
as $\delta \to 0$,  where
  \begin{equation}\label{eq:Scal_one}
    \Scal_1 = \begin{cases} \frac{29}{2^{10}\pi^3}, & \sigma>0 \quad (\text{focusing}),
    \\ \frac{5}{2^{10}\pi^3}, & \sigma<0 \quad (\text{defocusing}).\end{cases}
  \end{equation}
\end{theorem}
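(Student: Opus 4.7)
The plan is to analyse the supremum \eqref{eq:I_functional} perturbatively, around the extremisers of Foschi's linear inequality \eqref{eq:StrichartzIneq}. The first task is to show the supremum is attained: combining the Lorentz-invariance of $I(\delta)$ proved in Section~3 with a Bahouri--Gérard profile decomposition of the free evolutions of a maximising sequence of scattering data, and with the small-data scattering theory for~\eqref{eq:cubic_NLW} from Section~2, I expect a maximising sequence to be compact modulo the symmetry group. The resulting limit is a solution $u_\delta$ of \eqref{eq:cubic_NLW} whose scattering data at $t=-\infty$ has $\PhaseSpace$-norm exactly $\delta$.

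I then write $u_\delta = v_\delta + w_\delta$, with $v_\delta$ the free wave sharing the scattering data of $u_\delta$ at $-\infty$ and $w_\delta$ the Duhamel correction
\[
  w_\delta(t,x) \;=\; \sigma\!\int_{-\infty}^t \frac{\sin\bigl((t-s)\sqrtDelta\bigr)}{\sqrtDelta}\,u_\delta(s)^3\,ds.
\]
Small-data scattering gives $\norm{w_\delta}_{\Lfour} = O(\delta^3)$. Expanding the fourth power and replacing $u_\delta^3$ by $v_\delta^3$ inside $w_\delta$ at the cost of an $O(\delta^8)$ error, I obtain
\[
  \norm{u_\delta}_{\Lfour}^4 \;=\; \norm{v_\delta}_{\Lfour}^4 \;+\; 4\sigma\!\iint_{\R^{1+3}} v_\delta(t,x)^3\!\int_{-\infty}^t \frac{\sin\bigl((t-s)\sqrtDelta\bigr)}{\sqrtDelta}v_\delta(s)^3\,ds\,dt\,dx \;+\; O(\delta^8).
\]
The leading term is bounded by $\Scal_0 \delta^4$ via Foschi's inequality, with equality on linear extremisers; the correction is a quadratic form in $v_\delta^3$ of size $\delta^6$ whose sign is that of $\sigma$.

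To identify $\Scal_1$ I would evaluate this quadratic form on a concrete Foschi extremiser $v_*$. The natural choice is the conformal bubble obtained by pulling back a constant function from the Einstein cylinder $\R\times\mathbb{S}^3$ under the Penrose transform: on the cylinder, the retarded parametrix for $\partial_t^2-\Delta_{\mathbb{S}^3}$ diagonalises in spherical harmonics, so the cross term reduces to a finite sum of explicit integrals of products of Gegenbauer polynomials. In the focusing case $\sigma>0$, the linear and nonlinear contributions push $u_\delta$ in the same direction, so $u_\delta$ is close in $\PhaseSpace$ to a Foschi extremiser of mass $\delta$ and the computation directly produces $\Scal_1 = 29/(2^{10}\pi^3)$.

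The main obstacle is the defocusing case $\sigma<0$, where the two contributions compete: to mitigate the negative quadratic form it may be advantageous to pull $v_\delta$ away from the linear extremiser, at the price of losing in the leading Foschi bound. Quantifying this trade-off requires a Bianchi--Egnell-type stability estimate for~\eqref{eq:StrichartzIneq}, showing that the deficit $\Scal_0 \norm{\fbold}_{\PhaseSpace}^4-\norm{v}_{\Lfour}^4$ dominates the squared $\PhaseSpace$-distance from $\fbold$ to the manifold of extremisers. Once this is established, the optimal $v_\delta$ is determined by minimising a finite-dimensional quadratic functional on the tangent space to the extremiser manifold, and the competition yields the smaller constant $\Scal_1 = 5/(2^{10}\pi^3)$. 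The discrepancy between $29$ and $5$ is precisely the contribution absorbed by this tangential variation; the remainder is $O(\delta^8)$ in both cases.
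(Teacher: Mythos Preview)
Your outline matches the paper's architecture in several respects---profile decomposition for existence, Picard expansion to isolate the $\delta^6$ cross term, Penrose transform for the explicit integral, and the Bianchi--Egnell sharpening of Foschi's inequality (which the paper quotes as Lemma~\ref{lem:SharpenedStrichartz}). But the mechanism you propose for the $29$ versus $5$ discrepancy is wrong, and this is not a cosmetic issue: it changes what needs to be proved.

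In the paper, the sharpened Strichartz inequality is used \emph{symmetrically} in both the focusing and defocusing cases (Lemma~\ref{lem:FirstOrder} and Proposition~\ref{prop:SecondOrder}) to show that the scattering data $\fbold$ of any near-maximiser satisfies $\dist(\fbold,\Mrom)=O(\delta^3)$. Consequently the normal component $\fboldbot$ contributes only $O(\delta^8)$ to the expansion, and there is \emph{no} competition at order $\delta^6$ between the Foschi deficit and the cross term, in either sign regime. The constant $\Scal_1$ is therefore determined by optimising the cross term \emph{over $\Mrom$ itself}, not over normal perturbations.

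The point you are missing is that $\Mrom$ carries an extra parameter $\theta\in\SSS^1$ (the phase rotation $\Ph_\theta$ of Remark~\ref{rem:Ph_no_symmetry}) which is a symmetry of the linear flow but \emph{not} of the cubic equation. After quotienting by dilations, translations and Lorentz boosts---all of which commute with $\antibox$---the cross term reduces to a function of $\theta$ alone, and the Penrose computation gives
\[
\Scal(v_\theta)=\frac{\pi^3}{128}\bigl(24\cos^2\theta+5\bigr).
\]
Maximising $\sigma\Scal(v_\theta)$ over $\theta$ selects $\theta=0$ (value $29$) when $\sigma>0$ and $\theta=\pi/2$ (value $5$) when $\sigma<0$. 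So the $29/5$ dichotomy comes from choosing a different \emph{point on} $\Mrom$, not from pulling $v_\delta$ off $\Mrom$ and balancing a quadratic penalty. Your proposed ``finite-dimensional quadratic functional on the tangent space'' is not the right object; what is actually optimised is a scalar function on the one-dimensional quotient $\Mrom/\{\text{nonlinear symmetries}\}\cong\SSS^1$.
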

A similar asymptotic expansion was proven for the nonlinear Schr\"odinger equation, with $n=1$ or $2$, by Duyckaerts, Merle and Roudenko~\cite{DuMeRo11}; this is so far the only paper concerning maximizers for Strichartz norms in the nonlinear case. Our asymptotic analysis in the fourth section will be an adaptation of their argument. A key ingredient is the following version of the Strichartz inequality~\eqref{eq:StrichartzIneq}, in which the left-hand side is sharpened by adding a term proportional to the distance from the set~$\Mrom$ of maximizers. This was proved recently in~\cite{Negro18}. Consideration of inequalities of this type originated in a question of Brezis and Lieb~\cite[question~(c)]{BreLie85}, who asked whether the Sobolev inequality could be sharpened in the same way.
\begin{lem}\label{lem:SharpenedStrichartz}
Denote by $v=S\fbold$ the solution to $\partial_t^2 v=\Delta v$ with $\vbold(0)=\fbold$. Then there is a constant $c>0$ such that
  \begin{equation}\label{eq:SharpenedStrichartz}
    \norm{S\fbold}_{L^4(\R^{1+3})}^2 + c\dist(\fbold, \Mrom)^2\le \Scal_0^{1/2}\norm{\fbold}_{\Hcaldot^{1/2}(\R^3)}^2 ,
  \end{equation}
  where
  \begin{equation}\label{eq:MManifold}
  \Mrom:=\Set{ \gbold\in\Hcaldot^{1/2}(\R^3) | \norm{ S\gbold}_{L^4(\R^{1+3})}^4= \Scal_0 \norm{\gbold}_{\Hcaldot^{1/2}(\R^3)}^4 },
\end{equation}
and $\dist(\fbold, \Mrom):=\inf \big\{ \norm{\fbold-\gbold}_{\Hcaldot^{1/2}(\R^3)}\, |\, \gbold\in\Mrom\big\}$.
\end{lem}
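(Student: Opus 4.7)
The plan is to carry out a Bianchi--Egnell-type stability analysis for the deficit functional
\begin{equation}
J(\fbold) := \Scal_0^{1/2}\norm{\fbold}_{\Hcaldot^{1/2}(\R^3)}^2 - \norm{S\fbold}_{L^4(\R^{1+3})}^2,
\end{equation}
which is nonnegative by Foschi's sharp estimate \eqref{eq:StrichartzIneq} and vanishes precisely on $\Mrom$. The conclusion \eqref{eq:SharpenedStrichartz} is equivalent to the quadratic lower bound $J(\fbold)\ge c\,\dist(\fbold,\Mrom)^2$, which I would establish by combining a local coercivity estimate near $\Mrom$ with a global concentration-compactness reduction.

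First I would reduce the inequality to a statement in a small $\Hcaldot^{1/2}$-neighbourhood of $\Mrom$. Arguing by contradiction, suppose there is a sequence $\fbold_n$ with $\norm{\fbold_n}_{\Hcaldot^{1/2}}=1$ and $J(\fbold_n)/\dist(\fbold_n,\Mrom)^2\to 0$. Since $J$ is bounded away from $0$ on the complement of any neighbourhood of $\Mrom$ (by Foschi's strict inequality for non-extremizers), one has $\dist(\fbold_n,\Mrom)\to 0$. To extract convergence in spite of the noncompact symmetry group of \eqref{eq:StrichartzIneq} (space-time translations, dilations and Lorentzian boosts), apply a Bahouri--G\'erard-type profile decomposition adapted to the wave Strichartz estimate. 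The vanishing of $J(\fbold_n)$ forces a single profile to carry all the $\Hcaldot^{1/2}$-mass, and Foschi's classification of extremizers identifies that profile, after acting by the appropriate symmetry, with a fixed $\gbold_\infty\in\Mrom$. Thus it suffices to prove the coercivity of $J$ locally near each $\gbold\in\Mrom$.

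For the local analysis, write $\fbold=\gbold+\Bd{h}$ with $\Bd{h}$ orthogonal in $\Hcaldot^{1/2}$ to the tangent space $T_\gbold\Mrom$, and Taylor expand
\begin{equation}
J(\gbold+\Bd{h}) = \tfrac12\, Q_\gbold(\Bd{h}) + o\!\left(\norm{\Bd{h}}_{\Hcaldot^{1/2}}^2\right),
\end{equation}
where $Q_\gbold$ is the self-adjoint form given by the second variation of $J$ at $\gbold$. Invariance of \eqref{eq:StrichartzIneq} under the symmetry group exhibits $T_\gbold\Mrom$ as a finite-dimensional subspace of $\ker Q_\gbold$. To prove $Q_\gbold\ge c\,\mathrm{Id}$ on the $\Hcaldot^{1/2}$-orthogonal complement of $T_\gbold\Mrom$, I would transport the problem to the conformal compactification $\mathbb{S}^3\times\R$: Foschi's extremizers become the simplest stationary state there, and the linearized operator becomes a tractable Schr\"odinger-type operator whose spectrum can be computed by spherical-harmonic decomposition. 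A Fredholm-alternative argument then yields the spectral gap, and combining this with the reduction in the previous paragraph contradicts the assumed vanishing ratio and produces the constant $c$.

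The main obstacle is the spectral gap step, namely verifying that $\ker Q_\gbold$ contains \emph{no} directions beyond those generated by the symmetry Lie algebra. Compared with the Euclidean Schr\"odinger setting of~\cite{DuMeRo11}, the presence of the three boost generators enlarges the expected kernel, so the dimensions of the symmetry orbit and of the null eigenspace on $\mathbb{S}^3\times\R$ must be matched by explicit computation; ruling out hidden zero modes among the higher spherical harmonics is the delicate point on which the whole scheme rests.
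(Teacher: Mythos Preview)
The paper does not actually prove Lemma~\ref{lem:SharpenedStrichartz}; it is stated in the introduction with the attribution ``This was proved recently in~\cite{Negro18}'' and is used as a black box thereafter. So there is no proof in the present paper to compare your proposal against.

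That said, your outline is the correct Bianchi--Egnell strategy and is, in broad strokes, what is carried out in~\cite{Negro18}. The local second-variation coercivity you describe is precisely the content of Lemma~\ref{lem:transv_nondegenerate} here (quoted verbatim from Lemma~5.1 of~\cite{Negro18}), and the conformal compactification you propose is the Penrose transform of Section~\ref{pen}, under which the Foschi extremizers become constants on $\R\times\SSS^3$ and the Hessian diagonalizes over spherical harmonics. Your identification of the spectral-gap step as the crux is accurate: the tangent space $T_{\fboldstar}\Mrom$ is $10$-dimensional (scaling, phase $\theta$, three boosts, four spacetime translations --- see~\eqref{eq:alpha_bold} and~\eqref{eq:tangent_space_at_cgammaalpha}), and one must check that the null space of the second variation is no larger. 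One small correction to your sketch: the reduction to a neighbourhood of $\Mrom$ does not follow merely from ``Foschi's strict inequality for non-extremizers,'' since $J$ is not a priori bounded below by a positive constant away from $\Mrom$; one genuinely needs the profile decomposition (with Lorentz boosts, as in~\cite{Ramos12}) already at this global step to force $\dist(\fbold_n,\Mrom)\to 0$ along a maximizing sequence for the Strichartz quotient.
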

In the fifth section, we use the Penrose transform to calculate the constant~$\Scal_1$. This step requires the explicit knowledge of the linear maximizers. In the sixth section, we will prove the existence of maximizers using a standard argument based on a nonlinear profile decomposition, which will be proved in Appendix~A. In the final section we give a partial result concerning the uniqueness of these maximizers. This requires the study of some geometrical properties of~$\Mrom$, which is carried out in Appendix~C.

There is intense research currently on the dynamics of the cubic wave equation~\eqref{eq:cubic_NLW} in $\Hcaldot^{1/2}$; see~\cite{Do16_1,Do16, DL, Ro17, Sh12} and the recent~\cite{Do18_1,Do18,DoLaMeMu18}.  However, to the knowledge of the author, the only paper, other than the present one, that deals with Lorentzian transformations is the work of Ramos~\cite{Ram18}; see also~\cite{KiStVi12} for the Klein-Gordon equation. 

The problem of finding sharp bounds for the Strichartz norm of solutions to nonlinear equations is open for large data. Duyckaerts and Merle~\cite{DuMe09b} obtained a sharp bound for solutions to the focusing quintic wave equation that are close to the \emph{threshold} solution. For the defocusing quintic wave equation in $\R^{1+3}$, Tao~\cite{Ta08} gives a bound of the $L^4(\R;L^{12}(\R^3))$ norm in terms of a tower of exponentials of the $\dot{H}^1\times L^2$ norms of initial data. This result holds for all data, not just small, but is unlikely to be sharp, and it is interesting to note that a much smaller bound had previously been given in the radial case by Ginibre, Soffer and Velo~\cite{GiSoVe92}.

\subsection*{Acknowledgements} This work formed part of my PhD thesis; I would like to express my gratitude to my directors, Thomas Duyckaerts and Keith Rogers. I would also like to thank Javier Ramos for helpful conversations, and the anonymous referee for helpful suggestions to improve the manuscript. 

\section{Preliminaries}

For a general function $w$ on $\R^{1+3}$, we will write $\wbold(t):=(w(t, \cdot), \partial_t w(t, \cdot))$. We use the box notation for the d'Alembert operator; 
\begin{equation}\label{eq:box}
	\Box\, w:= \partial^2_{t}w-\Delta w.
\end{equation}
For solutions to the linear equation $\Box\,v=0$ we will tend to use the letter $v$ and if the initial data is given $\vbold(0)=\fbold$, we denote $S\fbold=v$. 

We now turn to the definition of  a \emph{solution} to \eqref{eq:cubic_NLW}. Here we will consider only global solutions which scatter to linear solutions as $t\to -\infty$. The following operator is adapted to this.
\begin{defn}\label{def:antibox}
 For $F\in L^{4/3}(\R^{1+3})$, we define 
  \begin{equation}\label{eq:antibox}
    \antibox F(t, \cdot) = \int_{-\infty}^t \frac{\sin( (t-s)\sqrtDelta)}{\sqrtDelta}(F(s, \cdot))\, ds.
  \end{equation}
\end{defn}
\noindent This is well-defined because of the inhomogeneous Strichartz estimate, which follows by a standard duality argument from the Strichartz estimate of the introduction; see, for example,~\cite[Corollary~1.3]{KeTa98}.
\begin{prop}\label{prop:ConfStrichartz}
  Let $F\in L^{4/3}(\R^{1+3})$ and $w=\antibox F$. Then
  \begin{equation}\label{eq:ConfStrichartz}
    \norm{w}_{L^{4}(\R^{1+3})}+\sup_{t\in\R}\norm{\wbold(t)}_{{\Hcaldot^{1/2}}}\le 
    C \norm{F}_{L^{4/3}(\R^{1+3})}.
  \end{equation}
  Moreover, the map 
  \begin{equation}\label{eq:w_phasespace_cont}
    t\in\R\mapsto \wbold(t)\in \PhaseSpace(\R^3)
  \end{equation}
is continuous.
\end{prop}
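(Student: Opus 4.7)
The estimate splits into an energy bound on $\wbold(t)$ and an $L^4$ bound on $w$, and both follow from the linear Strichartz inequality~\eqref{eq:StrichartzIneq} by a $TT^*$-type duality argument, supplemented by the Christ--Kiselev lemma to deal with the truncation in time.

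For the energy bound, I would fix $t \in \R$ and exploit the Hilbert structure of $\Hcaldot^{1/2}$: by Riesz it suffices to control $\langle \wbold(t), \gbold \rangle_{\Hcaldot^{1/2}}$ for a unit $\gbold$. A direct Plancherel computation rewrites this pairing as
\[
 \langle \wbold(t), \gbold \rangle_{\Hcaldot^{1/2}} = \int_{-\infty}^{t}\!\int_{\R^3} F(s,x)\,\Phi(s,x)\,dx\,ds,
\]
where $\Phi$ is a component of the free wave generated by $\gbold$ and propagated backward from time $t$; in particular $\Box \Phi = 0$ and $\norm{\Phi(s)}_{\Hcaldot^{1/2}} = \norm{\gbold}_{\Hcaldot^{1/2}}$ for every $s$. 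H\"older in space-time followed by~\eqref{eq:StrichartzIneq} applied to $\Phi$ then yield $|\langle \wbold(t), \gbold \rangle_{\Hcaldot^{1/2}}| \le \Scal_0^{1/4}\norm{F}_{L^{4/3}(\R^{1+3})}\norm{\gbold}_{\Hcaldot^{1/2}}$, uniformly in $t$, which gives the claim after taking the supremum over $\gbold$.

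For the $L^4$ bound on $w = \antibox F$, the plan is to first handle the \emph{untruncated} operator
\[
 \widetilde w(t,\cdot) \, := \, \int_\R \frac{\sin((t-s)\sqrtDelta)}{\sqrtDelta}\,F(s,\cdot)\,ds.
\]
Writing $T\fbold := S\fbold$, the linear estimate~\eqref{eq:StrichartzIneq} reads $\norm{T}_{\Hcaldot^{1/2}\to L^4(\R^{1+3})} \le \Scal_0^{1/4}$; hence $\norm{TT^*}_{L^{4/3}(\R^{1+3})\to L^4(\R^{1+3})} \le \Scal_0^{1/2}$, and a short computation identifies $TT^*F$ with $\widetilde w$ up to a harmless constant factor. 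To recover the truncation $\{s<t\}$ present in $\antibox F$, I would invoke the Christ--Kiselev lemma, applicable here because the target exponent $4$ is strictly greater than the source exponent $4/3$; this transfers the bound from $\widetilde w$ to $\antibox F$. This Christ--Kiselev step is the only piece with real content beyond the linear estimate; everything else is standard duality.

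Continuity of $t \mapsto \wbold(t)$ in $\Hcaldot^{1/2}$ is then a density argument: for $F \in C_c^\infty(\R^{1+3})$, continuity is immediate from the smoothness of the sine and cosine propagators in $t$, and for general $F \in L^{4/3}(\R^{1+3})$ one approximates by smooth compactly supported functions and uses the uniform energy bound just proved to pass to the limit.
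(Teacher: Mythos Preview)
Your argument is correct and is exactly the standard duality route the paper indicates by citing \cite[Corollary~1.3]{KeTa98}; the paper itself supplies no further details, so there is nothing to compare. One minor computational point: with $T=S$ the composition $TT^*F$ works out to have kernel $\cos((t-s)\sqrtDelta)/\sqrtDelta$ rather than the sine kernel in your $\widetilde w$, but the two are interchanged by a unitary rotation of the initial data and hence satisfy the same bound---or, more simply, run the $TT^*$ step on each half-wave $e^{\pm it\sqrtDelta}$ separately and sum.
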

\begin{rem}\label{rem:Inhomogeneous_Interval}
	Replacing $F$ with $F\mathbf1_{\{t<T\}}$, we immediately see that the following estimate also holds; 
	  \begin{equation}\label{eq:ConfStrichartzInterval}
	  	\begin{array}{cc}\Ds
    \norm{w}_{L^{4}((-\infty, T)\times \R^{3})}+\sup_{t\le T}\norm{\wbold(t)}_{{\Hcaldot^{1/2}}}\le 
    C \norm{F}_{L^{4/3}((-\infty, T)\times \R^{3})}, & \forall T\in \R.
    	\end{array}
  \end{equation}
\end{rem}
With this we obtain existence and uniqueness of small solutions by a standard application of the fixed-point theorem.
\begin{prop}\label{prop:small_data_theory}
	  There exists $\delta>0$ such that, if $\norm{\fbold}_{\PhaseSpace(\R^3)}\le \delta$, 
	  then there exists a unique solution $u$ to~\eqref{eq:cubic_NLW} that satisfies the condition 
	  \begin{equation}\label{eq:ancient_past} 
	    \lim_{t\to -\infty} \norm{ \ubold(t)-S\fbold(t)}_{\Hcaldot^{1/2}} = 0,
	  \end{equation}
 which we define as the fixed point of the mapping 
	\begin{equation}\label{eq:fixpoint}
		w\mapsto S \fbold + \sigma \antibox (w^3),
	\end{equation}
	in the space $L^4(\R^{1+3})\cap C(\R; \PhaseSpace(\R^3))$. 
	  Moreover, the nonlinear operator $$\Phi : \fbold \mapsto u $$ is locally bounded on $\PhaseSpace(\R^3)$, in the sense that
	  \begin{equation}\label{eq:crude_estimates}
	\norm{\Phi(\fbold)}_{\Lfourthree}+  \sup_{t\in\R} \norm{\Phi(\fbold)(t)}_{{\Hcaldot^{1/2}}} \le C_{\delta}\norm{\fbold}_{\Hcaldot^{1/2}}.
	  \end{equation}
	 \end{prop}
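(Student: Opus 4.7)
The plan is a standard Picard iteration based on the inhomogeneous Strichartz estimate~\eqref{eq:ConfStrichartz}. Equipping the space $L^4(\R^{1+3})\cap C(\R;\PhaseSpace)$ with the norm
\[
\norm{w}_X:=\norm{w}_{\Lfourthree}+\sup_{t\in\R}\norm{\wbold(t)}_{\PhaseSpace},
\]
and choosing $R:=2C_0\delta$, where $C_0$ denotes a constant controlling the free evolution $S$ as a map $\PhaseSpace\to X$ through the conservation of the $\PhaseSpace$-norm under the free flow and the homogeneous Strichartz inequality~\eqref{eq:StrichartzIneq}, I would show that the map $\Phi_\fbold(w):=S\fbold+\sigma\antibox(w^3)$ in~\eqref{eq:fixpoint} is a contraction of the closed ball of radius $R$ in $X$. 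Since $\norm{S\fbold}_X\le C_0\delta$, and since Proposition~\ref{prop:ConfStrichartz} combined with the identity $\norm{w^3}_{L^{4/3}}=\norm{w}_{L^4}^3$ gives $\norm{\sigma\antibox(w^3)}_X\le C_1|\sigma|R^3$, the self-mapping condition holds as soon as $4C_1|\sigma|C_0^2\delta^2\le 1$.

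For the contraction, the pointwise estimate $|w_1^3-w_2^3|\le 3(w_1^2+w_2^2)|w_1-w_2|$ combined with H\"older gives $\norm{w_1^3-w_2^3}_{L^{4/3}}\le 3R^2\norm{w_1-w_2}_{L^4}$, so $\Phi_\fbold$ is a $\tfrac12$-contraction on this ball once $\delta$ is small enough in terms of $|\sigma|$ and the Strichartz constants. The Banach fixed-point theorem then yields the unique $u$ in this ball with $u=S\fbold+\sigma\antibox(u^3)$, and the crude bound~\eqref{eq:crude_estimates} is immediate with $C_\delta:=2C_0$.

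It remains to verify the scattering condition~\eqref{eq:ancient_past}. Since $\ubold(t)-S\fbold(t)$ is the Cauchy pair at time $t$ of $\sigma\antibox(u^3)$, the interval version~\eqref{eq:ConfStrichartzInterval} gives
\[
\norm{\ubold(T)-S\fbold(T)}_{\PhaseSpace}\le C|\sigma|\norm{u}_{L^4((-\infty,T)\times\R^3)}^3,
\]
and the right-hand side vanishes as $T\to-\infty$ by dominated convergence, since $u\in L^4(\R^{1+3})$. Continuity of $t\mapsto\ubold(t)$ in $\PhaseSpace$ is inherited from~\eqref{eq:w_phasespace_cont}. Uniqueness in the full class $L^4\cap C(\R;\PhaseSpace)$, rather than merely within the fixed-point ball, follows from~\eqref{eq:ancient_past}: any competing solution satisfies $\norm{\ubold(t)-S\fbold(t)}_{\PhaseSpace}\to 0$ and hence lies in the small-ball $X_R$ on some left half-line $(-\infty,T_0]$, where the contraction pins it down, after which a standard continuation argument propagates the agreement to all of $\R$. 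The argument is essentially routine; the only point requiring care is the tuning of $R$ against $\delta$ and the Strichartz constants so that self-mapping and contraction hold simultaneously.
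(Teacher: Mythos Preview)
Your proposal is correct and is precisely the standard contraction-mapping argument that the paper invokes but does not write out (the paper simply states that the result follows ``by a standard application of the fixed-point theorem'' after Proposition~\ref{prop:ConfStrichartz}). The only minor quibbles are the exact numerical constants in the self-mapping and contraction estimates, which are irrelevant to the argument; otherwise your write-up fills in exactly what the paper leaves implicit.
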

	In particular, we see that $I(\delta)$ is finite for small enough values of $\delta>0$. 
	 \begin{rem}\label{rem:Differentiability}
	 	The nonlinear operator $\Phi$ is also differentiable for $\norm{\fbold}_\PhaseSpace <\delta$. We denote its directional derivative by
		\begin{equation}\label{eq:DirDerivPhi}
			\begin{array}{cc}
	\Ds 			\Phi'(\fbold)\gbold:=\left.\frac{d}{d\eps}\Phi(\fbold+\eps \gbold)\right|_{\eps=0}, &\Ds \forall \gbold\in \PhaseSpace.
	\end{array}
	\end{equation}
	\end{rem}
	 \section{Lorentzian invariance}\label{sym}
For all $\alpha\in (-1, 1)$ we define a linear transformation of $\R^{1+3}$ as
\begin{equation}\label{eq:LorentzMatrix}
  L^\alpha(\tau, \xi_1, \xi_2, \xi_3)=\begin{bmatrix} \gamma & -\gamma \alpha & 0 & 0 \\ -\gamma \alpha & \gamma & 0 & 0 \\ 0 & 0& 1 & 0 \\ 0& 0 & 0 &1\end{bmatrix}\begin{bmatrix} \tau \\ \xi_1 \\ \xi_2 \\ \xi_3 \end{bmatrix},
\end{equation}
where $\gamma:=(1-\alpha^2)^{-{1/2}}$. Clearly, $\det L^\alpha=1$ and $(L^\alpha)^{-1}=L^{-\alpha}$; moreover, for all $(t, x), (\tau, \xi)\in \R^{1+3}$, 
\begin{equation}\label{eq:LorentzSymmetric}
  L^\alpha(\tau, \xi)\cdot(t, x)=(\tau, \xi)\cdot L^\alpha(t, x).
\end{equation}
Denoting $(\tautilde, \xitilde)=L^\alpha(\tau, \xi)$ we also have the fundamental property 
\begin{equation}\label{eq:LorentzNormPreserve}
	\tau^2-\abs{\xi}^2 = \tautilde^2 - |\xitilde|^2,
\end{equation}
from which it descends that, if $\tau=\abs{\xi}$, then $\tautilde=|\xitilde|$; to see this, note that $\tautilde^2=|\xitilde|^2$, and $\tautilde=\gamma\abs\xi -\gamma\alpha \xi_1\ge 0$. Analogously, if $\tau=-\abs{\xi}$ then $\tautilde=-|\xitilde|$. 

We also have the Dirac delta identity
 \begin{equation}\label{eq:DiracDeltaLorentz}
 	2\delta(\tau^2-\abs\xi^2)\mathbf 1_{\{\pm \tau >0\}}=\frac{\delta(\tau \mp \abs\xi)}{\abs \xi};
\end{equation}
see, for example,~\cite{Foschi07}. By the previous considerations, the left-hand side is Lorentz-invariant, and so 
\begin{equation}\label{eq:invariant_LHS}
	\frac{\delta(\tau \mp \abs\xi)}{\abs \xi}=\delta(\tau^2-\abs\xi^2)\mathbf 1_{\{\pm \tau >0\}}=\delta(\tautilde^2-|\xitilde|^2)\mathbf 1_{\{\pm \tautilde >0\}}=\frac{\delta(\tautilde \mp |\xitilde|)}{|\xitilde|},
\end{equation}
which implies the integration formula
  \begin{equation}\label{eq:LorentzVarchange}
   \int_{\R^3} F(L^\alpha(\pm\abs\xi, \xi) )G(\pm \abs\xi, \xi)\frac{d\xi}{|\xi|} = \int_{\R^3} F(\pm|\xitilde|, \xitilde)G(L^{-\alpha}(\pm|\xitilde|, \xitilde))\frac{d\xitilde}{|\xitilde|}.
  \end{equation}

We will now prove that $\antibox$ commutes with $L^\alpha$. It is for this reason that we defined $\antibox$ as an integral over  $(-\infty,t)$ rather that $(0,t)$. Ramos considered the operator as an integral over $(0,t)$, but in that case the operators do not commute precisely; see \cite[Proposition 1]{Ram18}. 
\begin{lem}\label{lem:LorentzCommute}
	Let $F\in L^{4/3}(\R^{1+3})$. Then, for all $\alpha\in(-1, 1)$,
  \begin{equation}\label{eq:antibox_commutes}
    \antibox(F\circ L^\alpha)=(\antibox F)\circ L^\alpha.
  \end{equation}
\end{lem}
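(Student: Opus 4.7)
The plan is to recognize $\antibox F$ as a convolution with the retarded fundamental solution of $\Box$ in $\R^{1+3}$, whose Lorentz invariance will reduce the identity to a volume-preserving change of variable. Using Duhamel's formula together with the spherical-means representation of $\sin(t\sqrtDelta)/\sqrtDelta$, one sees that for smooth compactly-supported $F$
\[
\antibox F(t,x) = \int_{\R^{1+3}} E_+\!\big((t,x)-(s,y)\big)\,F(s,y)\,ds\,dy, \qquad E_+(t,x):=\tfrac{1}{4\pi\abs{x}}\delta(t-\abs{x})\mathbf{1}_{t\ge 0};
\]
by the identity \eqref{eq:DiracDeltaLorentz} this kernel may be rewritten in the manifestly Lorentz-invariant form $E_+(t,x)=\frac{1}{2\pi}\delta(t^2-\abs{x}^2)\mathbf{1}_{t>0}$.

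Next I would verify that $E_+$ is genuinely invariant under $L^\alpha$. The factor $\delta(t^2-\abs{x}^2)$ is invariant by \eqref{eq:LorentzNormPreserve}, and the forward-cone indicator $\mathbf{1}_{t>0}$ is preserved by the observation following \eqref{eq:LorentzNormPreserve} that $L^\alpha$ maps the forward light cone into itself. With this, the change of variables $(s,y)=L^\alpha(s',y')$ in the convolution, which preserves Lebesgue measure since $\det L^\alpha=1$, combined with the linearity of $L^\alpha$ and the identity $E_+\circ L^\alpha=E_+$, yields
\[
(\antibox F)\circ L^\alpha(t',x') = \int_{\R^{1+3}} E_+\!\big((t',x')-(s',y')\big)\,F\big(L^\alpha(s',y')\big)\,ds'\,dy' = \antibox(F\circ L^\alpha)(t',x').
\]
Extension from smooth $F$ to $F\in L^{4/3}(\R^{1+3})$ is by density: the pullback $F\mapsto F\circ L^\alpha$ is an isometry of $L^{4/3}$ since $\det L^\alpha=1$, and Proposition~\ref{prop:ConfStrichartz} ensures that $\antibox\colon L^{4/3}\to L^4$ is continuous, so both sides of \eqref{eq:antibox_commutes} depend continuously on $F$ in $L^4$.

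The main point to watch is why the identity is \emph{exact}, rather than holding only modulo a free solution of $\Box$: this is because the integration in \eqref{eq:antibox} begins at $-\infty$, so that the support of $E_+$ is the entire solid forward cone, which $L^\alpha$ maps bijectively onto itself. This is precisely the discrepancy with \cite{Ram18} flagged in the paragraph before the statement; had the lower limit been a finite constant, the change of variables would have deformed the domain of integration and produced a non-trivial free-wave correction.
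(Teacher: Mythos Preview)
Your proof is correct and takes a genuinely different, more direct route than the paper's. The paper works in Fourier space: it decomposes $\antibox=\antibox_+-\antibox_-$ into half-wave pieces, applies the Lorentz change of variable through the light-cone integration formula~\eqref{eq:LorentzVarchange}, and obtains $(\antibox F)\circ L^\alpha$ as an expression identical to $\antibox(F\circ L^\alpha)$ except that the indicator $\mathbf{1}_{\{s<t\}}$ is replaced by $\mathbf{1}_{\{s<t-\alpha(x_1-y_1)\}}$; the difference is then shown to vanish because the physical-space support of $\int \tfrac{\sin(s|\xi|)}{|\xi|}e^{-iy\cdot\xi}\,d\xi$ --- the fundamental solution, supported on the cone $\{|y|^2\le s^2\}$ --- meets the region where $\mathbf{1}_{\{s<\alpha y_1\}}\ne\mathbf{1}_{\{s<0\}}$ only at the origin. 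Your approach short-circuits this computation by recognising from the outset that $\antibox$ is spacetime convolution with the retarded fundamental solution $E_+$ and invoking the Lorentz invariance of $E_+$ in its manifestly invariant form $\tfrac{1}{2\pi}\delta(t^2-|x|^2)\mathbf{1}_{\{t>0\}}$. Both arguments ultimately rest on the same geometric fact (the forward light cone is preserved by $L^\alpha$), but yours isolates it cleanly and dispenses with the Fourier manipulations; the paper's route, by contrast, stays within the Fourier-analytic framework used elsewhere and makes the role of the lower limit $-\infty$ explicit through the indicator-function difference.
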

\begin{proof}
By the definition \eqref{eq:antibox} and Fubini's theorem, $\antibox(F\circ L^\alpha)(t, x)$ can be written as
\begin{equation}\label{eq:Lorentz_antibox}
  \iiint\frac{\sin((t-s)\abs\xi)}{\abs{\xi}}e^{i(x-y)\cdot\xi} F(L^\alpha(s,y))\mathbf1_{\{ s <t\}}\, dsdy\frac{d\xi}{\abs \xi},
\end{equation}
modulo irrelevant factors of $(2\pi)^{-3}$. On the other hand,  we divide the operator
\begin{equation}\label{eq:antibox_decomp}
  \antibox = \antibox_+  - \antibox_-,  
\end{equation}
where, for an arbitrary $H\in L^{4/3}(\R^{1+3})$,
\begin{equation}\label{eq:pm_antibox}
  \antibox_{\pm} H (t, x) := \iiint \frac{e^{i(t, x)\cdot(\pm\abs \xi, \xi) - i(s, y)\cdot(\pm\abs \xi, \xi)}}{2i} H(s, y) \mathbf1_{\{s<t\}}\, dsdy\frac{d\xi}{\abs \xi}.
\end{equation}
We compute a convenient expression for $(\antibox_{\pm} F)(L^\alpha(t, x))$ using the properties of $L^\alpha$ that we recalled in the beginning of the section;
\begin{equation}\label{eq:pm_antibox_Lorentz}
  \begin{split}
   &\ \iiint \frac{e^{iL^\alpha(t, x)\cdot(\pm\abs \xi, \xi) - i(s, y)\cdot(\pm\abs \xi, \xi)}}{2i} F(s, y) \mathbf1_{\{s<\gamma t-\gamma \alpha x_1\}}\, dsdy\frac{d\xi}{\abs \xi} \\ 
  = &\ \iiint \frac{e^{i(t, x)\cdot L^\alpha(\pm\abs \xi, \xi) - i(s, y)\cdot(\pm\abs \xi, \xi)}}{2i} F(s, y) \mathbf1_{\{s<\gamma t-\gamma \alpha x_1\}}\, dsdy\frac{d\xi}{\abs \xi} \\
    =&\ \iiint \frac{e^{i(t, x)\cdot (\pm\abs \xi, \xi) - i(s, y)\cdot L^{-\alpha}(\pm\abs \xi, \xi)}}{2i} F(s, y) \mathbf1_{\{s<\gamma t-\gamma \alpha x_1\}}\, dsdy\frac{d\xi}{\abs \xi} \\
     =&\ \iiint \frac{e^{i(t, x)\cdot (\pm\abs \xi, \xi) - i L^{-\alpha}(s, y)\cdot(\pm\abs \xi, \xi)}}{2i} F(s, y) \mathbf1_{\{s<\gamma t-\gamma \alpha x_1\}}\, dsdy\frac{d\xi}{\abs \xi} \\
  =&\ \iiint \frac{e^{i(t, x)\cdot (\pm\abs \xi, \xi) - i(s, y)\cdot(\pm\abs \xi, \xi)}}{2i} F(L^\alpha(s, y)) \mathbf1_{\{\gamma s-\gamma \alpha y_1<\gamma t-\gamma \alpha x_1\}}\, dsdy\frac{d\xi}{\abs \xi}.
   \end{split}  
\end{equation}
\begin{figure}[!]
\centering
\begin{tikzpicture}
\fill[green!20!white] (-2, 0) -- (0,0) -- (-2,-1) -- cycle;
\fill[green!20!white] (2, 0) -- (0,0) -- (2,1) -- cycle;
\fill[gray!20!white] (-2, -2) -- (2, 2) -- (-2, 2) -- (2, -2) -- cycle;
\draw[->] (-2, 0) -> (2.2,0) node[scale=0.8, anchor=west] {$y_1$};
\draw[->] (0, -2) -> (0, 2.2) node[scale=0.8, anchor=south]{$s$};
\draw (-2, -2) -- (2, 2) node[scale=0.8, anchor=south west] {$\abs{s}=\lvert y_1\rvert$}; 
\draw (-2, 2) -- (2, -2);
\draw (-2, -1) -- (2, 1) node[scale=0.8, anchor=west] {$s=\alpha y_1$};
\end{tikzpicture}
\caption{The support of $\mathbf1_{\{s<\alpha y_1\}}-\mathbf1_{\{s<0\}}$ (green) intersects the light cone (gray) only at the origin.}
\label{fig:LorentzSupport}
\end{figure}
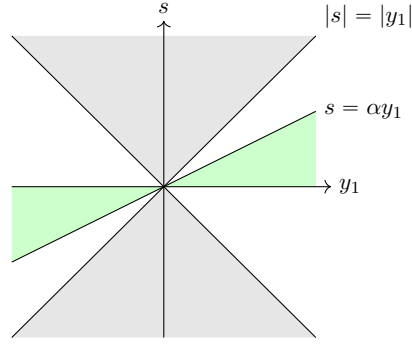
We conclude that $(\antibox F)(L^\alpha(t, x))$ is equal to  
\begin{equation}\label{eq:antibox_Lorentz}
  \iiint\frac{\sin((t-s)\abs\xi)}{\abs{\xi}}e^{i(x-y)\cdot\xi} F(L^\alpha(s,y))\mathbf1_{\{ s < t- \alpha (x_1-y_1)\}}\, dsdy\frac{d\xi}{\abs \xi}.
\end{equation}

Using these two expressions, the difference $\antibox(F\circ L^\alpha)-(\antibox F)\circ L^\alpha$ can be written as
\begin{equation}\label{eq:LorentzCommutator}
 \iiint \frac{\sin(s\abs\xi)}{\abs \xi} e^{-iy\cdot \xi} G(s, y) \Tonde{ \mathbf1_{\{s<\alpha y_1\}} - \mathbf1_{\{s<0\}}}\,dsdyd\xi,
\end{equation}
where  $G(s, y):=F(L^\alpha(s+t, y+x))$.
We now note that the distribution $v$, defined by the formal integral 
\begin{equation}\label{eq:Fourier_fund_sol}
  v(s, y):=\int_{\R^3}\frac{\sin(s\abs\xi)}{\abs\xi}e^{-iy\cdot \xi}\, d\xi,
\end{equation}
is a fundamental solution to the wave equation, that is,
\begin{equation}\label{eq:fund_wave_eqn}
  \begin{cases} \Box\, v=0, & \text{on }\R^{1+3},\\ 
   \vbold(0)=(0, \delta),
  \end{cases}
\end{equation}
where $\delta$ is the Dirac distribution. Therefore, $v$ is supported in the cone $\{|y|^2\le s^2\}$, which intersects the support of $\mathbf1_{\{s<\alpha y_1\}} - \mathbf1_{\{s<0\}}$ only at the origin (recalling that $|\alpha|<1$); see Figure~\ref{fig:LorentzSupport}. 
Thus the integral~\eqref{eq:LorentzCommutator} vanishes, completing the proof.
\end{proof}                                                                                                                                                                                                                                                                                                                
\begin{cor}\label{cor:LorentzPhaseSpaceContinuous}
 Let $\alpha\in(-1, 1)$, let $F\in L^{4/3}(\R^{1+3})$, and let $w_\alpha=\antibox F\circ L^\alpha$. Then the map $t\in \R\mapsto \wbold_\alpha(t)\in \PhaseSpace(\R^3)$ is continuous.
\end{cor}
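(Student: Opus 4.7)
The plan is to reduce the corollary to a direct application of Proposition~\ref{prop:ConfStrichartz}, using the commutation identity already established in Lemma~\ref{lem:LorentzCommute}. The first step is to observe that the Lorentz transformation $L^\alpha$ has $\abs{\det L^\alpha}=1$, so the change of variables $(s, y)\mapsto L^\alpha(s, y)$ preserves Lebesgue measure on $\R^{1+3}$. Consequently $F\circ L^\alpha$ still belongs to $L^{4/3}(\R^{1+3})$, with the same norm as~$F$.

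Next, by Lemma~\ref{lem:LorentzCommute} the function $w_\alpha=(\antibox F)\circ L^\alpha$ equals $\antibox(F\circ L^\alpha)$, so $w_\alpha$ is itself produced by applying $\antibox$ to an $L^{4/3}$ datum. Proposition~\ref{prop:ConfStrichartz} applied to $F\circ L^\alpha$ in place of $F$ then yields the continuity of $t\mapsto \wbold_\alpha(t)$ from $\R$ into $\PhaseSpace(\R^3)$, which is exactly the claim.

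I do not expect any serious obstacle. The one mildly confusing point is that, if one were to compute $\wbold_\alpha(t)=(w_\alpha(t, \cdot), \partial_t w_\alpha(t, \cdot))$ by naively applying the chain rule to $(\antibox F)(L^\alpha(t, x))$, the off-diagonal entries of $L^\alpha$ would entangle the time and spatial derivatives of $\antibox F$, and one would have to check continuity of a linear combination of these derivatives. This is entirely dodged by rewriting $w_\alpha$ as $\antibox(F\circ L^\alpha)$ via Lemma~\ref{lem:LorentzCommute}: after this rewriting, $\partial_t w_\alpha$ is simply the time derivative of a single anti-box solution, and Proposition~\ref{prop:ConfStrichartz} applies verbatim. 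The substantive content of the corollary has been fully absorbed into Lemma~\ref{lem:LorentzCommute}.
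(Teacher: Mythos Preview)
Your proposal is correct and matches the paper's intended argument: the corollary is stated without proof precisely because it follows immediately from Lemma~\ref{lem:LorentzCommute} and Proposition~\ref{prop:ConfStrichartz}, exactly as you outline.
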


The full symmetry group of solutions to~\eqref{eq:cubic_NLW} that we consider in this paper consists of Lorentzian boosts, dilations and spacetime translations. The Lorentzian boost of velocity $\beta\in \R^{3}$, with $|\beta|<1$, is defined by
\begin{equation}\label{eq:LorentzBoost}
	\begin{array}{cc}
  L^\beta(\tau, \xi)=R^{-1}\circ L^\alpha\circ  R (\tau, \xi),& \text{where } \alpha=\abs\beta,
  	\end{array} 
\end{equation}
and $R(\tau, \xi)=(\tau, R'\xi)$, with $R'$ being a rotation that maps $(1,0,0)$ to $\beta/|\beta|$. By convention we assume that $L^{(0,0,0)}$ is the identity. We denote
	\begin{equation}\label{eq:Lambda_Poinc}
	  \Lambda(t, x)= L^\beta\big(\lambda (t-t_0), \lambda (x-x_0)\big), 
	\end{equation}
where $t_0\in \R, x_0\in \R^3, \lambda>0$ and $\beta\in\R^3$, with $\abs{\beta}<1$; note that Lemma~\ref{lem:LorentzCommute} readily implies that, for all $F\in L^{4/3}(\R^{1+3})$, 
\begin{equation}\label{eq:GeneralAntiboxCommute}
	\antibox(F\circ \Lambda)=\lambda^{-2}(\antibox F)\circ \Lambda.
\end{equation}
It is well-known that these transformations act unitarily on solutions to the linear wave equation with data in $\PhaseSpace$, as in the following lemma. For a proof, see, for example, the third section of~\cite{Negro18}.
\begin{lem}\label{lem:LinearPoincareAction}
	Let $\fbold\in\PhaseSpace(\R^3)$. There exists a unique $\fbold_\Lambda\in\PhaseSpace(\R^3)$ such that 
	\begin{equation}\label{eq:LinearPoincareAction}
		\lambda S\fbold(\Lambda(t, x)) = S\fbold_\Lambda(t,x).
	\end{equation}
	Moreover,  $\norm{\fbold}_{{\Hcaldot^{1/2}}}=\norm{\fbold_\Lambda}_{{\Hcaldot^{1/2}}}$.
\end{lem}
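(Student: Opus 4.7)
The plan is to use the Fourier representation of linear wave solutions on the light cone, under which the $\PhaseSpace$ norm becomes an $L^2$ norm with respect to the Lorentz-invariant measure $d\xi/|\xi|$ appearing in~\eqref{eq:LorentzVarchange}. Writing
\begin{equation*}
S\fbold(t,x) = \int_{\R^3}\Bigl( a_+(\xi)\, e^{i(t,x)\cdot(|\xi|,\xi)} + a_-(\xi)\, e^{i(t,x)\cdot(-|\xi|,\xi)}\Bigr)\frac{d\xi}{|\xi|},
\end{equation*}
with $a_\pm$ determined by $\fbold$ through $a_\pm(\xi) = c\,(|\xi|\hat{f}_0(\xi) \mp i\hat{f}_1(\xi))$, a short computation shows that the cross terms between the forward and backward components cancel and
\begin{equation*}
\norm{\fbold}_{\PhaseSpace}^2 = C\int_{\R^3}\bigl(|a_+(\xi)|^2+|a_-(\xi)|^2\bigr)\frac{d\xi}{|\xi|}.
\end{equation*}

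By composition, it suffices to treat translations, dilations, rotations, and the boost $L^\alpha$ separately. Translations modulate $a_\pm$ by unimodular factors; dilations compose them with $\xi \mapsto \xi/\lambda$, whose Jacobian is absorbed exactly by the factor $\lambda$ in~\eqref{eq:LinearPoincareAction}; and rotations act by change of variable in $\xi$. In all three cases the measure $d\xi/|\xi|$ is preserved and the statement is immediate. The general Lorentz boost $L^\beta$ reduces to $L^\alpha$ by conjugation with the rotation $R$ from~\eqref{eq:LorentzBoost}.

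For the main case $\Lambda = L^\alpha$, I define $\tilde v(t,x) := S\fbold(L^\alpha(t,x))$. Using~\eqref{eq:LorentzSymmetric} to transfer the action of $L^\alpha$ from the spacetime variable to the frequency variable gives
\begin{equation*}
\tilde v(t,x) = \int_{\R^3}\Bigl( a_+(\xi)\, e^{i(t,x)\cdot L^\alpha(|\xi|,\xi)} + a_-(\xi)\, e^{i(t,x)\cdot L^\alpha(-|\xi|,\xi)}\Bigr)\frac{d\xi}{|\xi|},
\end{equation*}
and the change of variable~\eqref{eq:LorentzVarchange} recasts this in the standard form with new densities $\tilde a_\pm = a_\pm \circ L^{-\alpha}$ restricted to the forward and backward cones. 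The element $\fbold_\Lambda \in \PhaseSpace$ with these densities then satisfies $\tilde v = S\fbold_\Lambda$, and the invariance of the measure yields $\norm{\tilde a_\pm}_{L^2(d\xi/|\xi|)} = \norm{a_\pm}_{L^2(d\xi/|\xi|)}$, hence $\norm{\fbold_\Lambda}_{\PhaseSpace} = \norm{\fbold}_{\PhaseSpace}$.

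The main subtlety is that, a priori, $\tilde v$ is only a tempered distribution satisfying $\Box\tilde v=0$, so there is no obvious reason for its trace $\tilde\vbold(0)$ to belong to $\PhaseSpace$. The light-cone Fourier picture bypasses this: once $\tilde v$ is displayed from the outset as a superposition of plane waves with $L^2(d\xi/|\xi|)$ densities $\tilde a_\pm$, membership in $\PhaseSpace$, the identity $\tilde v=S\fbold_\Lambda$, and the norm equality are read off simultaneously, and the uniqueness of $\fbold_\Lambda$ follows because a linear wave solution is determined by its Cauchy data.
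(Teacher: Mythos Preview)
The paper does not prove this lemma; it states the result as well-known and refers to the third section of~\cite{Negro18} for a proof. Your argument is correct and is essentially the standard one: represent $S\fbold$ on the Fourier side as a superposition of plane waves on the two sheets of the light cone with densities $a_\pm\in L^2(d\xi/|\xi|)$, identify $\norm{\fbold}_{\PhaseSpace}^2$ with $C\int(|a_+|^2+|a_-|^2)\,d\xi/|\xi|$, and then use the Lorentz invariance of the measure $d\xi/|\xi|$ via~\eqref{eq:LorentzVarchange} to see that the boost acts by the measure-preserving change of variable $a_\pm\mapsto a_\pm\circ L^{-\alpha}$ on each sheet. This is exactly the machinery the paper sets up in~\eqref{eq:DiracDeltaLorentz}--\eqref{eq:LorentzVarchange}, so your proof fits naturally into the paper and is almost certainly the argument the cited reference has in mind.
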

The transformation $\Lambda$ also maps smooth solutions of~\eqref{eq:cubic_NLW} to smooth solutions. Using Lemma~\ref{lem:LorentzCommute}, we can now describe the action of $\Lambda$ on the class of solutions that we defined in Proposition~\ref{prop:small_data_theory}.
\begin{thm}\label{thm:NonlinearPoincareAction}
 Let $u\in L^4(\R^{1+3})$, with $\ubold\in C(\R;\PhaseSpace)$, satisfy the fixed point equation $u=S\fbold+\sigma\antibox(u^3)$. Denote 
\begin{equation}\label{eq:NonlDilPoinc}
	u_\Lambda(t, x)=\lambda u(\Lambda(t, x)).
\end{equation} 
Then $u_\Lambda \in L^4(\R^{1+3})$, with $\norm{u_\Lambda}_{L^4}=\norm{u}_{L^4}$, $\ubold_\Lambda\in C(\R; \PhaseSpace)$ and
\begin{equation}\label{eq:NonlinPoincTransf}
	u_\Lambda = S\fbold_\Lambda + \sigma \antibox( u_\Lambda^3),
\end{equation}
where $\fbold_\Lambda$ is defined in~\eqref{eq:LinearPoincareAction}; in particular,
 \begin{equation}\label{eq:nonlinear_norm_preserving}
 	\lim_{t\to-\infty} \norm{\ubold_\Lambda(t)}_{{\Hcaldot^{1/2}}} =\lim_{t\to -\infty} \norm{\ubold(t)}_{\Hcaldot^{1/2}}. \end{equation}
\end{thm}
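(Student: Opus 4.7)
The plan is to establish the four assertions in order, using the commutation relations and scaling identities developed earlier in this section.

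First I would verify $\|u_\Lambda\|_{L^4}=\|u\|_{L^4}$ by direct change of variables: setting $(s,y)=\Lambda(t,x)$, one has $ds\,dy=\lambda^4\,dt\,dx$ (Lorentzian boosts have unit determinant, dilation by $\lambda$ contributes $\lambda^4$, translations are trivial), which cancels exactly the $\lambda^4$ coming from $|u_\Lambda|^4=\lambda^4|u\circ\Lambda|^4$.

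Next I would establish~\eqref{eq:NonlinPoincTransf} by composing the identity $u=S\fbold+\sigma\antibox(u^3)$ with $\Lambda$ and multiplying by $\lambda$. The linear part is handled by Lemma~\ref{lem:LinearPoincareAction}, giving $\lambda(S\fbold)\circ\Lambda=S\fbold_\Lambda$. For the nonlinear part, since $u\in L^4$ implies $u^3\in L^{4/3}$, I can apply~\eqref{eq:GeneralAntiboxCommute} to obtain $(\antibox u^3)\circ\Lambda=\lambda^{2}\antibox(u^3\circ\Lambda)$. Combining this with the pointwise identity $u^3\circ\Lambda=\lambda^{-3}u_\Lambda^3$ yields $\lambda\cdot(\antibox u^3)\circ\Lambda=\antibox(u_\Lambda^3)$, which is the desired relation. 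The continuity $\ubold_\Lambda\in C(\R;\PhaseSpace)$ then follows directly from the right-hand side of~\eqref{eq:NonlinPoincTransf}: the linear flow is continuous in $\PhaseSpace$ by standard theory, and the Duhamel term is continuous by Corollary~\ref{cor:LorentzPhaseSpaceContinuous}, since $u_\Lambda^3\in L^{4/3}$.

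Finally, for the limit~\eqref{eq:nonlinear_norm_preserving}, I would apply Remark~\ref{rem:Inhomogeneous_Interval} to the Duhamel term in~\eqref{eq:NonlinPoincTransf}, obtaining
\begin{equation*}
 \sup_{t\le T}\bigl\|\ubold_\Lambda(t)-S\fbold_\Lambda(t)\bigr\|_{\PhaseSpace}\le C\,\|u_\Lambda^3\|_{L^{4/3}((-\infty,T)\times\R^3)},
\end{equation*}
and the right-hand side tends to $0$ as $T\to-\infty$ by dominated convergence, since $u_\Lambda\in L^4(\R^{1+3})$. Because the free flow preserves the $\PhaseSpace$ norm, this gives $\lim_{t\to-\infty}\|\ubold_\Lambda(t)\|_{\PhaseSpace}=\|\fbold_\Lambda\|_{\PhaseSpace}$, and the analogous argument applied to $u$ itself yields $\lim_{t\to-\infty}\|\ubold(t)\|_{\PhaseSpace}=\|\fbold\|_{\PhaseSpace}$. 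Equality of the two limits then follows from Lemma~\ref{lem:LinearPoincareAction}.

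The main difficulty I foresee is not a deep step but rather the careful tracking of scaling exponents in the nonlinear commutation: the $\lambda^{-2}$ from~\eqref{eq:GeneralAntiboxCommute}, the $\lambda^{-3}$ from the cubic nonlinearity, and the $\lambda$ in the definition $u_\Lambda=\lambda\,u\circ\Lambda$ must conspire so that the fixed-point structure is preserved exactly. Once the bookkeeping is right, the proof is a direct assembly of the preceding results.
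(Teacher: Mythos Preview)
Your proposal is correct and follows essentially the same route as the paper: compose the fixed-point identity with $\Lambda$, invoke Lemma~\ref{lem:LinearPoincareAction} for the linear piece and~\eqref{eq:GeneralAntiboxCommute} for the Duhamel piece, then cite Corollary~\ref{cor:LorentzPhaseSpaceContinuous} for the continuity. The paper's proof is considerably terser---it leaves the $L^4$ equality and the limit~\eqref{eq:nonlinear_norm_preserving} implicit as immediate consequences---whereas you spell out the scaling bookkeeping and use Remark~\ref{rem:Inhomogeneous_Interval} to make the vanishing of the Duhamel term at $t\to-\infty$ explicit; this added detail is sound and arguably clearer.
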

\begin{proof}
Using~\eqref{eq:GeneralAntiboxCommute}, we obtain from $u=S\fbold + \sigma \antibox(u^3)$ that 
\begin{equation}\label{eq:nonlin_change_var}
  \begin{split}
    \lambda u \circ \Lambda &= \lambda (S\fbold)\circ \Lambda + \lambda \sigma \antibox (u^3)\circ \Lambda \\ 
    &=S\fbold_\Lambda +\sigma\antibox(u_\Lambda^3), 
  \end{split}
\end{equation}
which proves~\eqref{eq:NonlinPoincTransf}. The fact that $\ubold_\Lambda\in C(\R;\PhaseSpace)$ follows from Corollary~\ref{cor:LorentzPhaseSpaceContinuous}.
\end{proof}

\section{The asymptotic formula}\label{asym}

Throughout this section, we consider $\norm{\fbold}_{\Hcaldot^{1/2}}\le \delta$ with $\delta$  sufficiently small, so that the corresponding solution $u=\Phi(\fbold)$   is well-defined, by Proposition~\ref{prop:small_data_theory}. Recalling that 
\begin{equation}\label{eq:u_delta_recall}
	u=\Phi(\fbold)=S\fbold + \sigma \antibox(u^3),
\end{equation}
we will require the following estimates on Picard iterations.
	 \begin{lem}\label{lem:linear_nonlinear_comparison}
	Let  $\norm{\fbold}_{\Hcaldot^{1/2}}\le\delta$. Then  as $\delta \to 0$,  
	  \begin{align}\label{eq:CompareFirstOrder}
	    \Phi(\fbold)&=S\fbold+O(\smallpar^{3}), \\
	\label{eq:CompareSecondOrder}
	   \Phi(\fbold)&=S \fbold+\sigma\antibox\Tonde{(S\fbold)^3 }+O(\smallpar^5),
	  \end{align}
	  where the big-$O$ symbols refer to the norms of $L^4(\R^{1+3})$ and $C(\R; \PhaseSpace)$.
	 \end{lem}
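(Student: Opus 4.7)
The plan is to use the fixed point equation $u = S\fbold + \sigma\antibox(u^3)$ together with the inhomogeneous Strichartz estimate~\eqref{eq:ConfStrichartz} and the a priori bound $\norm{u}_{L^4} + \sup_t\norm{\ubold(t)}_{\PhaseSpace} \le C\delta$ coming from~\eqref{eq:crude_estimates} (with $C_\delta$ bounded for $\delta$ small). All big-$O$ estimates refer to the norm of $L^4(\R^{1+3}) \cap C(\R;\PhaseSpace)$, and Strichartz~\eqref{eq:ConfStrichartz} applied to $\antibox(H)$ controls both components of this norm by $\norm{H}_{L^{4/3}}$, so it is enough to estimate the $L^{4/3}$ norm of the relevant source term.

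For the first-order expansion~\eqref{eq:CompareFirstOrder}, I would simply write $\Phi(\fbold) - S\fbold = \sigma \antibox(u^3)$, and then use \eqref{eq:ConfStrichartz} together with H\"older to obtain
\begin{equation}
\norm{\antibox(u^3)}_{L^4} + \sup_t\norm{(\antibox(u^3))(t), \partial_t(\antibox(u^3))(t)}_{\PhaseSpace} \le C\norm{u^3}_{L^{4/3}} = C\norm{u}_{L^4}^3 \le C\delta^3.
\end{equation}

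For the second-order expansion~\eqref{eq:CompareSecondOrder}, the difference to be controlled is
\begin{equation}
\Phi(\fbold) - S\fbold - \sigma\antibox\bigl((S\fbold)^3\bigr) = \sigma \antibox\bigl(u^3 - (S\fbold)^3\bigr).
\end{equation}
I would factor $u^3 - (S\fbold)^3 = (u - S\fbold)\bigl(u^2 + u\,S\fbold + (S\fbold)^2\bigr)$ and apply H\"older with exponents $(4,4,4)$, so that
\begin{equation}
\norm{u^3 - (S\fbold)^3}_{L^{4/3}} \le \norm{u - S\fbold}_{L^4}\bigl(\norm{u}_{L^4}^2 + \norm{u}_{L^4}\norm{S\fbold}_{L^4} + \norm{S\fbold}_{L^4}^2\bigr).
\end{equation}
By the already-established first-order estimate~\eqref{eq:CompareFirstOrder}, $\norm{u - S\fbold}_{L^4} \le C\delta^3$, while $\norm{u}_{L^4}, \norm{S\fbold}_{L^4} \le C\delta$, so the right-hand side is $O(\delta^5)$. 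A final application of~\eqref{eq:ConfStrichartz} to $\antibox(u^3 - (S\fbold)^3)$ transfers this bound to the $L^4 \cap C(\R;\PhaseSpace)$ norm, yielding~\eqref{eq:CompareSecondOrder}.

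There is no genuine obstacle here: the whole proof is an exercise in bootstrapping Picard iterates via Strichartz and H\"older, and the main thing to be careful about is only that the constants coming from \eqref{eq:ConfStrichartz} and \eqref{eq:crude_estimates} stay uniformly bounded for $\delta$ small, which is already built into Proposition~\ref{prop:small_data_theory}.
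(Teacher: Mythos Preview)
Your proof is correct and follows essentially the same approach as the paper: apply the Strichartz estimate~\eqref{eq:ConfStrichartz} to the fixed point equation to get the first Picard bound, then feed that bound back in via the H\"older factorisation of $u^3-(S\fbold)^3$ to obtain the second. The only cosmetic difference is that the paper compresses the three-term factor into $\norm{u}_{L^4}^2+\norm{S\fbold}_{L^4}^2$, but this is immaterial.
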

\begin{proof}
By the final estimate of Proposition~\ref{prop:small_data_theory}, we have $u =\Phi(\fbold)=O(\delta)$ and so $\norm{u^3}_{L^{4/3}}=O(\delta^3)$. Then, by the Strichartz estimate of Proposition~\ref{prop:ConfStrichartz}, we obtain $$\antibox(u^3)=O(\delta^3),$$ so the fixed point equation~\eqref{eq:u_delta_recall} yields~\eqref{eq:CompareFirstOrder}. Now, by the Hölder inequality,
\begin{equation}\label{eq:estimate_ucube_Sfcube}
	\norm{u^3-(S\fbold)^3}_{L^{4/3}}\le C\norm{u-S\fbold}_{L^4}\Tonde{\norm{u}_{L^4}^2+\norm{S\fbold}_{L^4}^2}\le O(\delta^5),
\end{equation}
where we used~\eqref{eq:CompareFirstOrder} to estimate $u-S\fbold$. We rewrite this as $$u^3=(S\fbold)^3 + O(\delta^5),$$ where the big-O symbol refers to the $L^{4/3}$ norm, and inserting this into the fixed point equation yields~\eqref{eq:CompareSecondOrder}. 
\end{proof}                                                                                                                                                                                                                                                                                                                

The function $I$, defined in the introduction,  can be rewritten as
\begin{equation}\label{eq:I_functional_fourth_sect}
  I(\delta) = \sup \Big \{\ \norm{\Phi(\fbold)}_{L^4(\R^{1+3})}^4 \ \Big|\  \norm{\fbold}_{\Hcaldot^{1/2}(\R^3)}\le\delta \ \Big\}.
\end{equation}
We record some properties of the $\fbold$ that come close to maximize $I(\delta)$; in the proof, we will need the sharpened Strichartz estimate of Lemma~\ref{lem:SharpenedStrichartz}.
	\begin{lem}\label{lem:FirstOrder}
		Let $\norm{\fbold}_{\Hcaldot^{1/2}}\le \delta$ and $u=\Phi(\fbold)$ be close to maximal in the sense that
		\begin{equation}\label{eq:almost_maximizer}
			I(\delta)- \|u\|^4_{L^4(\R^{1+3})}=O(\delta^6).
		\end{equation}
		Then $\norm{\fbold}_\PhaseSpace=\delta + O(\delta^3)$ and $\dist(\fbold, \Mrom)=O(\delta^2)$. Moreover, there is a $C>0$ such that
		\begin{equation}\label{eq:CoFi}
			\norm{ S\fbold}_{L^4(\R^{1+3})}^4\le \Scal_0 \delta^4 -C \delta^2\dist(\fbold, \Mrom)^2.
		\end{equation}
	\end{lem}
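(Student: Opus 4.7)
\smallskip

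\noindent\textbf{Plan for the proof of Lemma~\ref{lem:FirstOrder}.}  The strategy is to sandwich $\norm{S\fbold}_{L^4}^4$ between a lower bound of the form $\Scal_0\delta^4 - O(\delta^6)$, obtained from the near-maximality of $u$, and the upper bound \eqref{eq:CoFi} coming from the sharpened Strichartz estimate. All three conclusions will then fall out.

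\smallskip

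\noindent\emph{Lower bound on $I(\delta)$ and on $\norm{S\fbold}_{L^4}^4$.}
I first note that $\Mrom$ contains nonzero elements (since Foschi exhibits linear maximizers) and is invariant under the scaling $\gbold \mapsto \lambda \gbold$, so I can choose a genuine linear maximizer $\gbold \in \Mrom$ with $\norm{\gbold}_{\Hcaldot^{1/2}} = \delta$, which already satisfies $\norm{S\gbold}_{L^4}^4 = \Scal_0 \delta^4$.  By Lemma~\ref{lem:linear_nonlinear_comparison}, $\Phi(\gbold) = S\gbold + O(\delta^3)$ in $L^4$, so the elementary bound $\bigl|\,\norm{a}_{L^4}^4 - \norm{b}_{L^4}^4\bigr| \le C(\norm{a}_{L^4}^3 + \norm{b}_{L^4}^3)\norm{a-b}_{L^4}$ yields $\norm{\Phi(\gbold)}_{L^4}^4 = \Scal_0 \delta^4 + O(\delta^6)$, whence $I(\delta) \ge \Scal_0\delta^4 - O(\delta^6)$.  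Applying the same elementary bound to $u$ and $S\fbold$, and combining with the near-maximality hypothesis \eqref{eq:almost_maximizer}, gives
\begin{equation*}
\norm{S\fbold}_{L^4}^4 \ge \norm{u}_{L^4}^4 - O(\delta^6) \ge I(\delta) - O(\delta^6) \ge \Scal_0\delta^4 - O(\delta^6).
\end{equation*}

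\smallskip

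\noindent\emph{Using the sharpened Strichartz inequality.}
Since $\mathbf{0}\in\Mrom$, I have $\dist(\fbold,\Mrom) \le \norm{\fbold}_{\Hcaldot^{1/2}} \le \delta$, so the term $c\dist(\fbold,\Mrom)^2$ in \eqref{eq:SharpenedStrichartz} is bounded by $\Scal_0^{1/2}\norm{\fbold}_{\Hcaldot^{1/2}}^2$ for $\delta$ small enough. Squaring \eqref{eq:SharpenedStrichartz} via the inequality $(A-b)^2 \le A^2 - Ab$ valid for $0 \le b \le A$, with $A=\Scal_0^{1/2}\norm{\fbold}_{\Hcaldot^{1/2}}^2$ and $b = c\dist(\fbold,\Mrom)^2$, produces
\begin{equation*}
\norm{S\fbold}_{L^4}^4 \le \Scal_0\norm{\fbold}_{\Hcaldot^{1/2}}^4 - c\Scal_0^{1/2}\norm{\fbold}_{\Hcaldot^{1/2}}^2 \dist(\fbold,\Mrom)^2.
\end{equation*}

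\smallskip

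\noindent\emph{Extracting the three conclusions.}
Dropping the penalty term and using $\norm{\fbold}_{\Hcaldot^{1/2}}\le\delta$ together with the lower bound above,
\begin{equation*}
\Scal_0 \delta^4 - O(\delta^6) \le \Scal_0 \norm{\fbold}_{\Hcaldot^{1/2}}^4,
\end{equation*}
so $\norm{\fbold}_{\Hcaldot^{1/2}}^2 \ge \delta^2 - O(\delta^4)$; taking square roots and combining with $\norm{\fbold}_{\Hcaldot^{1/2}} \le \delta$ gives $\norm{\fbold}_{\Hcaldot^{1/2}} = \delta + O(\delta^3)$, which is the first claim. In particular $\norm{\fbold}_{\Hcaldot^{1/2}}^2 \ge \tfrac12 \delta^2$ for $\delta$ small, so the displayed upper bound on $\norm{S\fbold}_{L^4}^4$ implies \eqref{eq:CoFi} with $C = c\Scal_0^{1/2}/2$. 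Finally, comparing \eqref{eq:CoFi} with the lower bound $\norm{S\fbold}_{L^4}^4 \ge \Scal_0 \delta^4 - O(\delta^6)$ yields $C\delta^2 \dist(\fbold,\Mrom)^2 \le O(\delta^6)$, which gives $\dist(\fbold,\Mrom) = O(\delta^2)$.

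\smallskip

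\noindent\emph{Expected difficulty.}  There is no serious obstacle here: each step is an application of a result already stated (Lemmas~\ref{lem:linear_nonlinear_comparison} and~\ref{lem:SharpenedStrichartz}). The only care needed is the order of operations — (1) must be essentially established before (3) so that the factor $\norm{\fbold}_{\Hcaldot^{1/2}}^2$ produced by squaring \eqref{eq:SharpenedStrichartz} can be replaced by a multiple of $\delta^2$ — and the existence of a linear maximizer with any prescribed $\PhaseSpace$-norm, which is immediate from the scaling invariance of $\Mrom$.
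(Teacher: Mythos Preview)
Your proof is correct and follows essentially the same approach as the paper: lower-bound $I(\delta)$ via a linear maximizer, compare $u$ with $S\fbold$ through the first Picard estimate of Lemma~\ref{lem:linear_nonlinear_comparison}, and square the sharpened Strichartz inequality~\eqref{eq:SharpenedStrichartz}. The only differences are cosmetic---the paper squares via $(p+q)^2\ge p^2+2pq$ to obtain a penalty term $2c\norm{S\fbold}_{L^4}^2\dist(\fbold,\Mrom)^2$ rather than your $c\Scal_0^{1/2}\norm{\fbold}_{\Hcaldot^{1/2}}^2\dist(\fbold,\Mrom)^2$---and one small slip in your justification: the condition $b\le A$ needed for $(A-b)^2\le A^2-Ab$ follows directly from~\eqref{eq:SharpenedStrichartz} itself (since $\norm{S\fbold}_{L^4}^2\ge 0$), not from any smallness of $\delta$.
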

\begin{proof}
	By squaring the sharpened Strichartz estimate \eqref{eq:SharpenedStrichartz}, we obtain
	\begin{equation}\label{eq:SharpenedStrichartzUsable}
	  \norm{S\fbold}_{L^4}^4 + 2c \norm{S\fbold}_{L^4}^2\dist(\fbold, \Mrom)^2\le \Scal_0\delta^4.
	\end{equation}
Now, we use the first Picard estimate \eqref{eq:CompareFirstOrder} for $u=\Phi(\fbold)$ in order to find upper and lower bounds for $I(\delta)$. On the one hand, by combining it with the closeness assumption~\eqref{eq:almost_maximizer} and with~\eqref{eq:SharpenedStrichartzUsable}, we find that
	\begin{equation}\label{eq:UpBoundFirstOrder}
	  \begin{split}
			I(\delta)=\norm{u}_{L^4}^4 +O(\delta^6)
			 &= \norm{ S\fbold }_{L^4}^4 + O(\delta^6)  \\
			&\le \Scal_0 \delta^4  -2c\norm{S\fbold}_{L^4}^2 \dist(\fbold, \Mrom)^2 +O(\delta^6).
	\end{split}
	\end{equation}
	 On the other hand, if $\gbold\in\Mrom$ is such that $\norm{\gbold}_{\Hcaldot^{1/2}(\R^3)}=1$, then, by definition,
	 \begin{equation}\label{eq:LowBoundFirstOrder}
	 	I(\delta)\ge \norm{ \Phi(\delta \gbold)}_{L^4}^4 \ge\Scal_0 \delta^4 +O(\delta^6),
	\end{equation}
	where the second inequality uses \eqref{eq:CompareFirstOrder} and the fact that $\norm{S(\delta \gbold)}_{L^4}^4=\Scal_0\delta^4$.  Combining these upper and lower bounds for $I(\delta)$ we find that
	\begin{equation}\label{eq:dist_bound}
		2c\norm{S \fbold}_{L^4}^2\dist(\fbold,\Mrom)^2\le O(\delta^6),
	\end{equation}
and
	\begin{equation}\label{eq:Sf_is_deltafour}
	  \norm{S\fbold}_{L^4}^4\ge \Scal_0\delta^4+ O(\delta^6).
	\end{equation}
Using the Strichartz inequality $\Scal_0\norm{\fbold}_\PhaseSpace^4\ge \norm{S\fbold}_{L^4}^4$ and the assumption $\norm{\fbold}_\PhaseSpace\le \delta$, the bound~\eqref{eq:Sf_is_deltafour} gives that $\norm{\fbold}_\PhaseSpace=\delta + O(\delta^3)$. Inserting~\eqref{eq:Sf_is_deltafour} into~\eqref{eq:dist_bound} we conclude that $\dist(\fbold, \Mrom)^2=O(\delta^4)$. On the other hand, reinserting~\eqref{eq:Sf_is_deltafour} into~\eqref{eq:SharpenedStrichartzUsable} yields~\eqref{eq:CoFi}, and the proof is complete.
\end{proof}

For a slightly stronger version of the following lemma, see Proposition~\ref{prop:Metric_Projection} in Appendix~\ref{app:geometry}.

\begin{lem}\label{lem:metric_proj}
  For every $\fbold\in\PhaseSpace(\R^3)$ there exists a $\fbold_{\!\star}\in\Mrom$ such that
  \begin{equation}\label{eq:fboldbot_distance}
    \norm{\fbold-\fbold_{\!\star}}_{{\Hcaldot^{1/2}}(\R^3)}=\dist(\fbold, \Mrom).
  \end{equation}
  Moreover, $\Braket{\fbold_{\!\star}|\fbold-\fbold_{\!\star}}_{\Hcaldot^{1/2}}=0$ and we write $\fboldbot:=\fbold-\fbold_{\!\star}$; see Figure~\ref{fig:distproj}.
\end{lem}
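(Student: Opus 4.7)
My plan is to split the proof into two parts, with the orthogonality following from the cone structure of $\Mrom$ and the existence via a compactness argument applied to a minimizing sequence.

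For the orthogonality, I first observe that $\Mrom$ is a cone invariant under real scalar multiplication, since both sides of the defining identity $\|S\gbold\|_{L^4}^4=\Scal_0\|\gbold\|_\PhaseSpace^4$ are four-homogeneous. Granted the existence of a minimizer $\fboldstar$, the parabola $\lambda\in\R\mapsto \|\fbold-\lambda\fboldstar\|_\PhaseSpace^2$ is then minimized over $\R$ at $\lambda=1$ (since $\lambda\fboldstar\in\Mrom$ for every $\lambda\in\R$), and differentiating there yields $\Braket{\fboldstar|\fbold-\fboldstar}_{\Hcaldot^{1/2}}=0$ at once.

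For existence, I take a minimizing sequence $\gbold_n \in \Mrom$ with $\|\fbold-\gbold_n\|_\PhaseSpace \to \dist(\fbold,\Mrom)$. This sequence is bounded in the reflexive Hilbert space $\PhaseSpace$, so after extracting a subsequence, $\gbold_n$ converges weakly to some $\fboldstar \in \PhaseSpace$. Weak lower semicontinuity of the norm then gives $\|\fbold-\fboldstar\|_\PhaseSpace \le \dist(\fbold,\Mrom)$; the remaining, and main, difficulty is to show $\fboldstar \in \Mrom$. The obstacle is that $\Mrom$ is not weakly closed: its symmetry group contains Lorentz boosts and dilations, so a bounded sequence in $\Mrom$ can slide weakly to zero while retaining full $\PhaseSpace$-norm.

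To overcome this, I would invoke a Bahouri-Gérard-type profile decomposition for the linear Strichartz problem, writing $\gbold_n = \sum_{j} T_n^j\phi^j + r_n^J$ with profiles $\phi^j$ asymptotically orthogonal in both $\PhaseSpace$-norm and $L^4$-Strichartz norm, and $\|Sr_n^J\|_{L^4}\to 0$ as $J\to\infty$. Setting $a_j:=\|\phi^j\|_\PhaseSpace^2$, the Strichartz inequality $\|S\phi^j\|_{L^4}^4 \le \Scal_0 a_j^2$ applied to each profile, combined with the saturation $\|S\gbold_n\|_{L^4}^4=\Scal_0\|\gbold_n\|_\PhaseSpace^4$ and the asymptotic Pythagorean identities, leads in the limit to the chain
\[ \Scal_0\Bigl(\sum_j a_j\Bigr)^2 = \sum_j \|S\phi^j\|_{L^4}^4 \le \Scal_0 \sum_j a_j^2. \]
Since $\sum_j a_j^2 \le (\sum_j a_j)^2$ for nonnegative sequences, every inequality above is in fact an equality, which forces all but one profile $\phi^1$ to vanish and $\phi^1 \in \Mrom$. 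A case analysis on $T_n^1$ then finishes the proof: either $T_n^1$ escapes to infinity, in which case $\gbold_n$ converges weakly to zero and $\fboldstar=0 \in \Mrom$; or a subsequence of $T_n^1$ converges to some $T_\infty^1$, in which case $\gbold_n \to T_\infty^1\phi^1$ strongly and $\fboldstar=T_\infty^1\phi^1 \in \Mrom$.
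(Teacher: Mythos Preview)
Your approach is correct in substance. The orthogonality argument via the cone structure is clean and gives exactly the claim $\langle\fboldstar\mid\fbold-\fboldstar\rangle_{\PhaseSpace}=0$ stated in the lemma (note that only orthogonality to $\fboldstar$ itself is asserted here; the full tangent-space orthogonality is the content of the stronger Proposition~\ref{prop:Metric_Projection}).

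For existence, your profile-decomposition argument is essentially right, but the displayed chain has a slip: the Pythagorean expansion in $\PhaseSpace$ leaves a remainder $R:=\lim_J\limsup_n\|r_n^J\|_{\PhaseSpace}^2$ that need not vanish a priori, so one only has $\lim_n\|\gbold_n\|_{\PhaseSpace}^2=\sum_j a_j+R$. The correct chain is
\[
\Scal_0\Bigl(\sum_j a_j+R\Bigr)^{2}=\sum_j\|S\phi^j\|_{L^4}^4\le\Scal_0\sum_j a_j^2\le\Scal_0\Bigl(\sum_j a_j\Bigr)^{2}\le\Scal_0\Bigl(\sum_j a_j+R\Bigr)^{2},
\]
which then forces $R=0$ together with the one-profile conclusion and $\phi^1\in\Mrom$. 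With this fix, the strong convergence $\gbold_n-T_n^1\phi^1\to 0$ in $\PhaseSpace$ follows, and your dichotomy on the parameters of $T_n^1$ cleanly finishes the proof.

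As for comparison with the paper: the paper does not give an in-text proof of this lemma. It simply invokes the stronger Proposition~\ref{prop:Metric_Projection} in Appendix~\ref{app:geometry}, which in turn cites Step~1 of the proof of Proposition~5.3 in~\cite{Negro18} for both existence and orthogonality. Your argument therefore supplies a self-contained proof where the paper chose to defer; the cited argument in~\cite{Negro18} is likewise of concentration-compactness type, so the two routes are close in spirit.
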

\begin{figure}
\centering 
\scalebox{0.7}
{
\begin{tikzpicture}
[
tangent/.style={
        decoration={
            markings,
            mark=
                at position #1
                with
                {
                    \coordinate (tangent point-\pgfkeysvalueof{/pgf/decoration/mark info/sequence number}) at (0pt,0pt);
                    \coordinate (tangent unit vector-\pgfkeysvalueof{/pgf/decoration/mark info/sequence number}) at (1,0pt);
                    \coordinate (tangent orthogonal unit vector-\pgfkeysvalueof{/pgf/decoration/mark info/sequence number}) at (0pt,1);
                }
        },
        postaction=decorate
    },
    use tangent/.style={
        shift=(tangent point-#1),
        x=(tangent unit vector-#1),
        y=(tangent orthogonal unit vector-#1)
    },
    use tangent/.default=1
]
	\draw (0, 0) -- (3, 5)         ;
	\draw (0,0) -- (-3, 5)         ;
	\draw (0, 5) circle (3 and 1/3) ;
	\draw (0, 5) node {\Mrom}; 
	\fill ({atan(5/3)}: {2/3*sqrt(34)}) node (fboldstar) {} node [xshift=-8, yshift=7]  {$\fbold_{\!\star}$};

	\draw (0,0) node[anchor=west] {$\obold$};
	 \draw [opacity=0, tangent=0]( {atan(5/3)}: {2/3*sqrt(34)}) arc (0:-135:2/3*3 and 2/3*1/3) ; 
	\draw [use tangent,rotate=-30, -{>[scale=1.5]}] (0,0) --node[sloped, rotate=-35, above] {\color{black} $\fboldbot$} (0,2) node (GammaMinus) {}  node [anchor=north west] {$\fbold$};
	\fill 
	  (fboldstar) circle (0.05) (GammaMinus) circle (0.05) (0,0) circle (0.05);
	\draw[use tangent, rotate=-30, help lines] 
		(0,2) -- + (1,0) 
		 (1,0) 
		++(-0.1, 0) --node[sloped, rotate=-30, below] {\color{black} $\dist(\fbold, \Mrom)$} +(0, 2); 
\end{tikzpicture}
}
\caption{ }
\label{fig:distproj}
\end{figure}

We can now obtain the asymptotic formula by combining the previous lemmas with the second Picard iteration estimate.
\begin{prop}\label{prop:SecondOrder}
Let $\norm{\fbold}_{\Hcaldot^{1/2}}\le\delta$ and $u=\Phi(\fbold)$ be close to maximal in the sense that
  \begin{equation}\label{eq:almost_maximizer_stronger}
	I(\delta)- \|u\|^4_{L^4(\R^{1+3})}=O(\delta^8).
 \end{equation}
 Then $\dist(\fbold, \Mrom) = O(\delta^3)$ and, as $\delta\to0$,  
  \begin{equation}\label{eq:Taylor_second_order}
    I(\delta) = \Scal_0 \delta^4 + \sigma \Scal_1\delta^6 +O(\delta^8), 
  \end{equation}
  where $\sigma$ is the coefficient of the nonlinearity in~\eqref{eq:cubic_NLW}. The constant $\Scal_1$ satisfies 
  \begin{equation}\label{eq:ScalOne}
    \sigma\Scal_1= \sup \Set{ \sigma \iint_{\R^{1+3}} (S\gbold)^3\antibox((S\gbold)^3)\, dtdx  |  
      \begin{array}{c} \gbold\in \Mrom \\ \norm{\gbold}_{{\Hcaldot^{1/2}}}=1\end{array} }.
  \end{equation}
\end{prop}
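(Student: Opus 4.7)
The plan is to combine the second-order Picard expansion of Lemma~\ref{lem:linear_nonlinear_comparison} with the sharpened Strichartz inequality~\eqref{eq:CoFi} to produce matching upper and lower bounds for $I(\delta)$; the improved distance estimate $\dist(\fbold,\Mrom)=O(\delta^3)$ and the asymptotic formula~\eqref{eq:Taylor_second_order} will then fall out simultaneously by balancing these bounds.

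Writing $u=S\fbold+w$ with $w=\sigma\antibox((S\fbold)^3)+O(\delta^5)$ in $L^4$ by~\eqref{eq:CompareSecondOrder}, and using $\|S\fbold\|_{L^4}=O(\delta)$ together with $\|w\|_{L^4}=O(\delta^3)$, H\"older applied to every cross-term beyond the linear-in-$w$ one in the binomial expansion of $u^4$ controls each by $O(\delta^8)$, leaving
\begin{equation*}
  \|u\|_{L^4}^4=\|S\fbold\|_{L^4}^4+4\sigma\iint (S\fbold)^3\antibox((S\fbold)^3)\,dt\,dx+O(\delta^8).
\end{equation*}
I would then use Lemma~\ref{lem:metric_proj} to decompose $\fbold=\fboldstar+\fboldbot$, and, since $\Mrom$ is a cone, write $\fboldstar=\tau\gbold_{\star}$ with $\gbold_{\star}\in\Mrom$ of unit $\PhaseSpace$-norm. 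Lemma~\ref{lem:FirstOrder} yields $\|\fbold\|_\PhaseSpace=\delta+O(\delta^3)$ and $\dist(\fbold,\Mrom)=O(\delta^2)$, so the orthogonality from Lemma~\ref{lem:metric_proj} together with Pythagoras gives $\tau=\delta+O(\delta^3)$ and hence $\tau^6=\delta^6+O(\delta^8)$.

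Expanding $(S\fbold)^3=(S\fboldstar+S\fboldbot)^3$ by multilinearity and bounding the dominant $3(S\fboldstar)^2 S\fboldbot$ piece by H\"older plus the inhomogeneous Strichartz estimate~\eqref{eq:ConfStrichartz} shows that the change in the nonlinear integral when replacing $\fbold$ by $\fboldstar$ is \emph{linear} in $\|\fboldbot\|_\PhaseSpace$:
\begin{equation*}
  \iint (S\fbold)^3\antibox((S\fbold)^3)\,dt\,dx=\tau^6\iint (S\gbold_{\star})^3\antibox((S\gbold_{\star})^3)\,dt\,dx+O(\delta^5\dist(\fbold,\Mrom)).
\end{equation*}
Combining this with $\tau^6=\delta^6+O(\delta^8)$ and the supremum defining $\sigma\Scal_1$ in~\eqref{eq:ScalOne}, together with~\eqref{eq:CoFi} and the near-maximality $\|u\|_{L^4}^4=I(\delta)+O(\delta^8)$, produces the upper bound
\begin{equation*}
  I(\delta)\le\Scal_0\delta^4+\sigma\Scal_1\delta^6-c\delta^2\dist(\fbold,\Mrom)^2+O(\delta^5\dist(\fbold,\Mrom))+O(\delta^8).
\end{equation*}
The matching lower bound $I(\delta)\ge\Scal_0\delta^4+\sigma\Scal_1\delta^6+O(\delta^8)$ follows by substituting $\fbold=\delta\gbold$ into the expansion of the first display, for $\gbold\in\Mrom$ of unit norm chosen to approach the supremum in~\eqref{eq:ScalOne}.

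Subtracting yields the quadratic inequality $c\delta^2\dist(\fbold,\Mrom)^2\le C\delta^5\dist(\fbold,\Mrom)+O(\delta^8)$, from which completing the square (equivalently, Young's inequality) forces $\dist(\fbold,\Mrom)=O(\delta^3)$; re-inserting this refined distance into the upper bound absorbs $O(\delta^5\dist)$ into $O(\delta^8)$, producing~\eqref{eq:Taylor_second_order}. The main delicate point I foresee is establishing the \emph{linear}-in-$\|\fboldbot\|_\PhaseSpace$ control of the error in the nonlinear integral at the projection step: this linearity, originating from the $3(S\fboldstar)^2 S\fboldbot$ term of the cube expansion, is precisely what allows the completion-of-square to upgrade $\dist=O(\delta^2)$ directly to the optimal $\dist=O(\delta^3)$, whereas a naive iterative bootstrap of the quadratic inequality would only converge to $\dist=O(\delta^{3-\varepsilon})$, leaving an insufficient remainder for the $O(\delta^8)$ precision.
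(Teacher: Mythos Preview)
Your proposal is correct and follows essentially the same route as the paper: second-order Picard expansion, metric projection $\fbold=\fboldstar+\fboldbot$, the linear-in-$\|\fboldbot\|$ error for the sextilinear integral, and the quadratic inequality in $\dist(\fbold,\Mrom)$ obtained by comparing upper and lower bounds. The only organizational difference is that you bound the nonlinear integral by $\sigma\Scal_1\delta^6$ directly in both the upper and lower bounds, whereas the paper first matches upper and lower bounds using the \emph{same} projection point $\fboldstar$ (via $\tilde\fboldstar=\fboldstar/\|\fboldstar\|$) and only afterwards identifies $\sigma\iint(S\fboldstar)^3\antibox((S\fboldstar)^3)$ with the supremum; both orderings work.
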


\begin{proof}  
By Lemma~\ref{lem:metric_proj}, we can write $\fbold=\fbold_{\!\star}+\fboldbot$. Using the orthogonality, we have
$$\norm{\fbold_{\!\star}}_{\Hcaldot^{1/2}}^2+\norm{\fboldbot}_{\Hcaldot^{1/2}}^2=\|\fbold\|_{\Hcaldot^{1/2}}^2\le \delta^2,$$ from which we conclude that
$  \norm{\fbold_{\!\star}}_{\Hcaldot^{1/2}}\le \delta$. This also shows that 
\begin{equation}\label{eq:fboldstar_norm_estimate}
	\norm{\fboldstar}_\PhaseSpace^2 = \delta^2 + O(\delta^4),
\end{equation} 
because $\norm{\fbold}^2_\PhaseSpace=\delta^2 +O(\delta^4)$ and $\norm{\fboldbot}_\PhaseSpace^2 = O(\delta^4)$ by Lemma~\ref{lem:FirstOrder}. Expanding, we find
\begin{equation}\label{eq:Sf_delta_cube}
  \Tonde{S\fbold}^3= \Tonde{S \fbold_{\!\star}}^3 + O(\delta^2\norm{\fboldbot}_{\Hcaldot^{1/2}}),
\end{equation}
where the big-O symbol refers to the $L^{4/3}(\R^{1+3})$ norm. Applying $\antibox$, we infer from the Strichartz estimates~\eqref{eq:ConfStrichartz} that
\begin{equation}\label{eq:antibox_Sf_delta_cube}
 \antibox(\Tonde{S\fbold}^3)= \antibox(\Tonde{S \fbold_{\!\star}}^3) + O(\delta^2\norm{\fboldbot}_{\Hcaldot^{1/2}}),
\end{equation}
where the big-O now refers to both the $L^4(\R^{1+3})$ and the $C(\R;\PhaseSpace)$ norm. So, we can write 
\begin{equation}\label{eq:DecompAftermath}
\iint_{\R^{1+3}}(S\fbold)^3\antibox((S\fbold)^3)=\iint_{\R^{1+3}}(S\fboldstar)^3\antibox((S\fboldstar)^3) + O(\delta^5\norm{\fboldbot}_{{\Hcaldot^{1/2}}}).
\end{equation}

Now the key ingredient in this case is the second Picard estimate~\eqref{eq:CompareSecondOrder}, from which we deduce $$\norm{\Phi( \hbold)}_{L^4}^4=\norm{ S \hbold + \sigma\antibox((S \hbold)^3)}_{L^4}^4+ O(\delta^8),$$
whenever $\norm{\hbold}_{\Hcaldot^{1/2}}\le \delta$. This implies that 
 \begin{equation}\label{eq:ExpSecOrder}
    \norm{\Phi( \hbold)}_{L^4}^4= \norm{S \hbold}_{L^4}^4+ 4\sigma\iint_{\R^{1+3}}(S\hbold)^3\antibox((S\hbold)^3) + O(\delta^8).
 \end{equation}
As $u=\Phi( \fbold)$ with $\norm{\fbold}_{\Hcaldot^{1/2}}\le\delta$, on the one hand this yields an upper bound using our closeness hypothesis;
 \begin{equation}\label{eq:FirstPicard}
   I(\delta)\le \norm{u}_{L^4}^4 + O(\delta^8) = \norm{S \fbold}_{L^4}^4 +4\sigma\iint_{\R^{1+3}}(S\fbold)^3\antibox((S\fbold)^3) + O(\delta^8).
 \end{equation}
Estimating the first term on the right-hand side using~\eqref{eq:CoFi} of the previous lemma and the second term using~\eqref{eq:DecompAftermath}, we obtain
\begin{equation}\label{eq:PollutedUpBound}
 \begin{split}
 I(\delta)\le\  & \Scal_0\delta^4 +4\sigma\iint_{\R^{1+3}}(S\fboldstar)^3\antibox((S\fboldstar)^3)  - C \delta^2 \dist(\fbold, \Mrom)^2\\
   & +O(\delta^5\dist(\fbold, \Mrom))+O(\delta^8).
  \end{split}
\end{equation}
For the lower bound, we let $\tilde{\fboldstar}:= \fboldstar/\norm{\fboldstar}_{\Hcaldot^{1/2}}$, so that $I(\delta)\ge \lVert \Phi(\delta\tilde\fboldstar)\rVert_{L^4}^4$, and expanding using~\eqref{eq:ExpSecOrder} we obtain 
\begin{equation}\label{eq:IdeltaBelow}
  I(\delta)\ge \Scal_0\delta^4 +4\sigma\delta^6\iint_{\R^{1+3}}(S\tilde\fboldstar)^3\antibox((S\tilde\fboldstar)^3)+ O(\delta^8),
\end{equation}
where we used that $\lVert S\tilde{\fboldstar}\rVert_{L^4}^4=\Scal_0$. Now, using~\eqref{eq:fboldstar_norm_estimate}, we see that 
\begin{equation}\label{eq:Scal_estimate}
	\begin{split}
	\delta^6\iint_{\R^{1+3}}(S\tilde\fboldstar)^3\antibox((S\tilde\fboldstar)^3)&=\norm{\fboldstar}^6_\PhaseSpace\iint_{\R^{1+3}}(S\tilde\fboldstar)^3\antibox((S\tilde\fboldstar)^3) + O(\delta^8)\\
	& = \iint_{\R^{1+3}}(S\fboldstar)^3\antibox((S\fboldstar)^3)+O(\delta^8),
	\end{split}
\end{equation}
so combining the upper and lower bounds~\eqref{eq:PollutedUpBound} and~\eqref{eq:IdeltaBelow} yields
\begin{equation}\label{eq:DistBootstrap}
  \delta^2\dist(\fbold, \Mrom)^2\le O(\delta^5\dist(\fbold, \Mrom) + \delta^8).
\end{equation}
Writing $X:=\dist(\fbold, \Mrom)\delta^{-3}$, this  reads $X^2\le O(1+X)$, which implies that $X=O(1)$. Thus we find that $\dist(\fbold, \Mrom)=O(\delta^3)$.

To complete the proof we observe that, since $O(\delta^5\dist(\fbold, \Mrom))\!=\!O(\delta^8)$, it follows from~\eqref{eq:PollutedUpBound} and~\eqref{eq:IdeltaBelow} that 
\begin{equation}\label{eq:IntUdeltaTaylor}
 I(\delta) = \Scal_0 \delta^4 +4\sigma  \iint_{\R^{1+3}}(S\fboldstar)^3\antibox((S\fboldstar)^3) + O(\delta^8).
\end{equation}
However, for all $\gbold\in\Mrom$ with $\norm{\gbold}_{\Hcaldot^{1/2}}=\delta$, we also have
\begin{equation}\label{eq:Idelta_Dominates}
	I(\delta)\ge \norm{\Phi(\delta\gbold)}_{L^4}^4 = \Scal_0\delta^4 +4\sigma\iint_{\R^{1+3}}(S\gbold)^3\antibox((S\gbold)^3)+O(\delta^8),
\end{equation}
and so, combining this with \eqref{eq:IntUdeltaTaylor}, we conclude that the term
\begin{equation}\label{eq:SecOrdTerm}
	\sigma\iint_{\R^{1+3}}(S\fboldstar)^3\antibox((S\fboldstar)^3)
\end{equation}
must be equal to
 \begin{equation}\label{eq:SecOrdInt}
 	\begin{split}
   \sup \Set{\sigma \iint_{\R^{1+3}}(S\gbold)^3\antibox((S\gbold)^3) | \begin{array}{c}\gbold\in \Mrom\\ \norm{\gbold}_{\Hcaldot^{1/2}}=\delta\end{array}}+ O(\delta^8),
    \end{split}
 \end{equation}
thus proving~\eqref{eq:ScalOne}.
\end{proof}                                                                                                                                                                                                                                                                                                                

It remains to evaluate this supremum, which we will do in the sequel.

\section{Computation of the constant $\Scal_1$ via the Penrose transform }\label{pen}
We consider the following family of elements of $\PhaseSpace(\R^3)$:
  \begin{equation}\label{eq:extremizer_seed_conformal_notation}
	\fboldstartheta:=\Tonde{\cos \theta \frac{2}{1+\abs{\cdot}^2}, \sin\theta \Tonde{\frac{2}{1+\abs{\cdot}^2}}^2},
\end{equation}
and we let
\begin{equation}\label{eq:ustartheta_fboldtheta}
  \begin{array}{cc}
 \vstartheta:=S \fboldstartheta, & \vboldstartheta:=(\vstartheta, \partial_t \vstartheta).
  \end{array}
\end{equation}
One can calculate that $\norm{\fboldstartheta}_{\Hcaldot^{1/2}}=\abs{\SSS^3}^{1/2}$; see~\cite[equation~(33)]{Negro18}. \begin{rem}\label{rem:Ph_no_symmetry}
For all $t\in\R$ it holds that $\vboldstartheta(t)=\Ph_\theta\vbold_{0}(t)$, where 
  \begin{equation}\label{eq:PhaseSymmetry}
	\Ph_\theta \fbold: = \PhaseMatrix{\theta} \begin{bmatrix} f_0 \\ f_1 \end{bmatrix}.
\end{equation}
The operator $\Ph_\theta\colon \PhaseSpace \to \PhaseSpace$ is unitary and it commutes with the linear propagator $S$, but it does \emph{not} commute with the nonlinear propagator $\Phi$. 
\end{rem}
\begin{prop}[Foschi~\cite{Foschi07}]\label{prop:StriMaxCharacterize}
Let $\Mrom$ be the set of extremizing functions for the Strichartz inequality; see~\eqref{eq:MManifold}. Then
\begin{equation}\label{eq:MromCharacterized}
  \Mrom=\Set {c\left.(\vboldstartheta \circ \Lambda)\right|_{t=0} | c, \theta, \Lambda} ,
\end{equation}
where $c\ge 0$, $\theta\in \SSS^1$ and $\Lambda(t, x)=L^\beta\big(\lambda (t-t_0), \lambda (x-x_0)\big)$.
\end{prop}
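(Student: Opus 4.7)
The plan is to follow Foschi's original argument from~\cite{Foschi07}, which splits into two parts: first verifying that every element on the right-hand side of~\eqref{eq:MromCharacterized} attains equality in~\eqref{eq:StrichartzIneq}, and then conversely showing that every extremizer is obtained this way.

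The first direction is the easier one. Since this section is devoted to the Penrose transform, the natural route is to pass to the Einstein static universe $\R\times\SSS^3$: the conformal transformation intertwines $\Box$, up to a conformal weight, with the conformal wave operator on the cylinder, and carries $\norm{v}_{L^4(\R^{1+3})}$ to an $L^4$ norm over a bounded region of the cylinder with its standard volume element. Under this map, $\vstartheta$ corresponds to a purely time-dependent solution, so H\"older's inequality against a constant is sharp, which yields $\norm{S\fboldstartheta}_{L^4}^4 = \Scal_0\norm{\fboldstartheta}_{\PhaseSpace}^4$. The remaining elements of the family in~\eqref{eq:MromCharacterized} are then extremizers thanks to the isometric action described in Lemma~\ref{lem:LinearPoincareAction}, the unitarity of $\Ph_\theta$ noted in Remark~\ref{rem:Ph_no_symmetry}, and the obvious homogeneity of the Strichartz inequality in $c\ge 0$.

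For the converse direction, which is the core difficulty, Foschi's central observation is that, since $\widehat{v}$ is supported on the light cone $\{\tau=\pm|\xi|\}$, the fourth power $\norm{v}_{L^4}^4$ may be rewritten as a bilinear convolution of measures on the cone. After a Lorentz-invariant change of variables based on~\eqref{eq:LorentzVarchange}, a careful Cauchy--Schwarz estimate produces the sharp constant $\Scal_0$, and the equality case forces the Fourier data to satisfy a multiplicative functional equation. The main obstacle is the subsequent classification of the solutions to this functional equation: one must account for every symmetry of the inequality, including the non-geometric phase rotation $\Ph_\theta$ which does not correspond to a diffeomorphism of $\R^{1+3}$, and verify that the full orbit of $\fboldstartheta$ under the group generated by Lorentz boosts, dilations, spacetime translations, and phase rotations truly exhausts the solution set rather than a proper subfamily. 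Once this symmetry bookkeeping is in hand, Foschi's Cauchy--Schwarz analysis closes the argument and recovers~\eqref{eq:MromCharacterized}.
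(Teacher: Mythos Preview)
The paper does not supply a proof of this proposition: it is stated with the attribution ``[Foschi~\cite{Foschi07}]'' and immediately followed by a remark, with no argument given. The result is simply quoted from Foschi's paper as input to the rest of the analysis, so there is no proof in the paper to compare your proposal against.

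That said, your sketch is a fair high-level summary of how Foschi's argument proceeds: the bilinear convolution rewriting of $\norm{v}_{L^4}^4$, the sharp Cauchy--Schwarz step yielding $\Scal_0$, and the functional-equation analysis of the equality case are indeed the key ingredients in~\cite{Foschi07}. One small caution: your first paragraph suggests verifying that $\fboldstartheta$ is an extremizer via the Penrose transform and a H\"older argument on the cylinder. While this is in the spirit of the paper's later computations, Foschi himself works directly in Fourier variables on the cone, and the Penrose route for the equality case would require additionally identifying the $\PhaseSpace$ norm with a Sobolev-type norm on $\SSS^3$, which you have not spelled out. This is not wrong, but it is a different packaging from Foschi's, and in any case the paper simply invokes the result rather than reproving it.
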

\begin{rem}\label{rem:uniquely_determined}
	If $c\left.(\vboldstartheta \circ \Lambda)\right|_{t=0} =c' \left.(\vbold_{\theta'} \circ \Lambda')\right|_{t=0}$ and $c\ne 0$, then $c=c', \theta=\theta'$ and $\Lambda=\Lambda'$; see Appendix~\ref{app:geometry}.
\end{rem}
Recalling the definition \eqref{eq:ScalOne} of $\Scal_1$, we define 
\begin{equation}\label{eq:ScalFunctional}
	\begin{array}{cc}
 \Ds \Scal(w):=\iint_{\R^{1+3}}w^3 \antibox(w^3), & \text{where } w\in L^4(\R^{1+3}),
 	\end{array} 
\end{equation}
so that $\sigma \Scal_1=\sup \{\sigma \Scal(v)\ |\ v=S \gbold,\ \gbold\in \Mrom,\ \norm{\gbold}_{{\Hcaldot^{1/2}}}=1\}$.
\begin{prop}\label{prop:SoneIsSonetheta}
 For all $w\in L^4(\R^{1+3})$, 
 \begin{equation}\label{eq:ScalInvariance}
  \Scal (w\circ \Lambda ) = \lambda^2 \Scal( w).
\end{equation}
In particular,
 \begin{equation}\label{eq:SimplifiedScalOne}
    \sigma\Scal_1 =\max \Set{ \frac{\sigma \Scal(v_{\theta})}{\abs{\SSS^3}^3} | \theta\in\SSS^1} .
 \end{equation}
\end{prop}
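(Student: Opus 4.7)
The strategy is to prove the two assertions in sequence: first the scaling identity \eqref{eq:ScalInvariance}, and then the reduction of the supremum in \eqref{eq:ScalOne} to one over the compact parameter $\theta \in \SSS^1$.

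For the scaling identity, I would first observe that $(w \circ \Lambda)^3 = w^3 \circ \Lambda$, then apply the antibox commutation \eqref{eq:GeneralAntiboxCommute} to obtain $\antibox((w \circ \Lambda)^3) = \lambda^{-2} (\antibox(w^3)) \circ \Lambda$. Substituting into the definition of $\Scal$ and changing variables by $(t', x') = \Lambda(t, x)$ gives the identity directly: the Lorentz boost $L^\beta$ has determinant $1$ (as recorded after \eqref{eq:LorentzMatrix}), while the dilation by $\lambda$ contributes $\lambda^4$ in four-dimensional spacetime, which combines with the $\lambda^{-2}$ from the commutation to produce the total scaling factor.

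To establish \eqref{eq:SimplifiedScalOne}, I take any $\gbold \in \Mrom$ with $\norm{\gbold}_\PhaseSpace = 1$. By Proposition~\ref{prop:StriMaxCharacterize}, there exist $c \geq 0$, $\theta \in \SSS^1$ and a Poincar\'e-dilation transformation $\Lambda$ such that $\gbold = c(\vboldstartheta \circ \Lambda)|_{t=0}$; interpreting this as the Cauchy data at $t = 0$ of the transformed linear solution $\vstartheta \circ \Lambda$ and combining Lemma~\ref{lem:LinearPoincareAction} applied to $\fboldstartheta$ with the identity $\norm{\fboldstartheta}_\PhaseSpace = \abs{\SSS^3}^{1/2}$, I can write $S\gbold = \tilde c \cdot \vstartheta \circ \Lambda$ with the constant $\tilde c$ pinned down by the normalization, giving $\tilde c = \lambda \abs{\SSS^3}^{-1/2}$. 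Applying the sixth-degree homogeneity of $\Scal$ together with the scaling identity from the first step, the $\lambda$-factors cancel to yield $\Scal(S\gbold) = \Scal(\vstartheta) / \abs{\SSS^3}^3$, independently of $\Lambda$.

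The supremum becomes a maximum because $\SSS^1$ is compact and $\theta \mapsto \sigma \Scal(\vstartheta)$ is continuous, which follows from continuity of $\theta \mapsto \fboldstartheta$ in $\PhaseSpace$ combined with the Strichartz estimate~\eqref{eq:ConfStrichartz}. The main bookkeeping hurdle will be interpreting the notation $(\vboldstartheta \circ \Lambda)|_{t=0}$ correctly as the Cauchy data of the transported solution rather than a naive evaluation of the pair $\vboldstartheta$ at $\Lambda(0, \cdot)$, so that Lemma~\ref{lem:LinearPoincareAction} genuinely applies, and then tracking the powers of $\lambda$ carefully so that they drop out cleanly in the final expression.
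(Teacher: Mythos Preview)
Your strategy matches the paper's: invoke the commutation relation~\eqref{eq:GeneralAntiboxCommute} for the first identity, then use Foschi's characterization (Proposition~\ref{prop:StriMaxCharacterize}) together with homogeneity to strip away everything except~$\theta$. The compactness argument for replacing the supremum by a maximum is fine.

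There is, however, a sign error in your Jacobian bookkeeping. After writing
\[
  \Scal(w\circ\Lambda)=\lambda^{-2}\iint_{\R^{1+3}}\bigl[w^{3}\,\antibox(w^{3})\bigr]\circ\Lambda\,dt\,dx,
\]
the substitution $(t',x')=\Lambda(t,x)$ gives $dt'\,dx'=\lambda^{4}\,dt\,dx$, hence $dt\,dx=\lambda^{-4}\,dt'\,dx'$, and the integral picks up a factor $\lambda^{-4}$, not $\lambda^{4}$. The correct identity is therefore
\[
  \Scal(w\circ\Lambda)=\lambda^{-6}\,\Scal(w);
\]
the exponent $\lambda^{2}$ in the statement of~\eqref{eq:ScalInvariance} appears to be a misprint. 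This matters for your second step: with $S\gbold=\tilde c\,v_\theta\circ\Lambda$ and $\tilde c=\lambda\,\lvert\SSS^{3}\rvert^{-1/2}$, the homogeneity and the corrected scaling give
\[
  \Scal(S\gbold)=\tilde c^{\,6}\,\Scal(v_\theta\circ\Lambda)
  =\lambda^{6}\,\lvert\SSS^{3}\rvert^{-3}\cdot\lambda^{-6}\,\Scal(v_\theta)
  =\frac{\Scal(v_\theta)}{\lvert\SSS^{3}\rvert^{3}},
\]
so the $\lambda$'s do cancel as you assert---but only with the exponent $-6$. Had you carried your own exponent $+2$ through, you would have obtained $\lambda^{8}\lvert\SSS^{3}\rvert^{-3}\Scal(v_\theta)$, which does not cancel; that inconsistency is a signal that the Jacobian went the wrong way.

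For comparison, the paper's proof is terser: it simply asserts that one may take $\lambda=1$ in the representation $v=\lvert\SSS^{3}\rvert^{-1/2}v_\theta\circ\Lambda$. Read literally this is not true for every normalized $\gbold\in\Mrom$ (by the uniqueness in Remark~\ref{rem:uniquely_determined}); it should be understood as ``by the scaling identity, $\Scal(S\gbold)$ is unchanged if we replace $\Lambda$ by one with $\lambda=1$''. Your approach of tracking the powers of $\lambda$ explicitly is the cleaner justification, once the exponent is fixed.
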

\begin{proof}
The property~\eqref{eq:ScalInvariance} follows from the commutation property~\eqref{eq:GeneralAntiboxCommute} of~$\antibox$. To conclude it suffices to note that, by Proposition~\ref{prop:StriMaxCharacterize}, if $v=S\gbold$ with $\gbold\in\Mrom$ and $\norm{\gbold}_{\Hcaldot^{1/2}}=1$, then $v=\abs{\SSS^3}^{-{1/2}} v_\theta\circ \Lambda$ for a $\theta\in\SSS^1$ and a transformation $\Lambda$ with~$\lambda=1$.
\end{proof}

To compute the maximum in~\eqref{eq:SimplifiedScalOne} we will use the Penrose transform; see~\cite[Appendix A.4]{Hor97}. For this we introduce two coordinate systems on the Minkowski spacetime $\R^{1+3}$ and another two on the curved spacetime $\R\times \SSS^3$, 
where \begin{equation}\label{eq:sphere_definition}
	\SSS^3 = \Set{ (X_0, X_1, X_2, X_3) \in \R^4 : X_0^2+X_1^2+X_2^2+ X_3^2=1}.
\end{equation}
 On $\R^{1+3}$, letting $t\in\R$ be the time coordinate, we define the \emph{polar coordinates} by setting  
\begin{equation}\label{eq:polar}
  \begin{array}{cc}
    r=\abs{x}, & \omega = \frac{x}{\abs{x}}\in \SSS^2.
  \end{array}
\end{equation}
On the other hand,  we define the \emph{light-like coordinates} on $\R^{1+3}$ as 
\begin{equation}\label{eq:lightlike}
 \begin{array}{ccc}
    \xminus=t-r, & \xplus=t+r, & \xminus\le \xplus.
 \end{array}
\end{equation}
On $\R\times \SSS^3$, letting $T$ be the time coordinate,  we define the \emph{spherical polar coordinates} via the equations
\begin{equation}\label{eq:sphere_polar_coordinates}
	\begin{array}{ccc}
	X_0=\cos(\RPolar),& (X_1, X_2, X_3)= \sin(\RPolar)\, \omega , & \ \omega \in \SSS^{2}, \ \RPolar\in [0, \pi].
	\end{array}
\end{equation}
Finally, we define the \emph{light-like coordinates} on $\R\times \SSS^3$ as 
\begin{equation}\label{eq:curved_lightlike_coords}
 \begin{array}{cc}
   \Xminus=\frac12(T-R) , & \Xplus=\frac12(T+R).
 \end{array}
\end{equation}

We can now define an injective map 
\begin{equation}\label{eq:PenroseMapRelation} 
	\begin{array}{cc}
	\Pcal\colon \R^{1+3}\to \R\times \SSS^3, & (T, \cos R, \sin(R)\omega)=\Pcal(t, x), 
	\end{array}
\end{equation}
via the equations
\begin{equation}\label{eq:Penrose_map}
	\begin{array}{cc}
		\Xminus=\arctan \xminus, & \Xplus=\arctan \xplus,
	\end{array}
\end{equation}
remarking that $\Xminus$ and $\Xplus$ take values in
\begin{equation}\label{eq:image_penrose_lightlike}
  \Tcal:=\Set{ (\Xminus, \Xplus)\in [-\tfrac\pi2,\tfrac\pi2]^2 | \Xminus\le \Xplus}. 
\end{equation}
So, the map $\Pcal$ is not surjective and its image $\Pcal(\R^{1+3})$ is 
\begin{equation}\label{eq:image_of_penrose}
	\Pcal(\R^{1+3}) = \Set{ \Big(T, \cos R, \sin (R)\,\omega \Big)\in\R\times \SSS^3 | \begin{array}{c} 
		-\pi<T<\pi \\  
		0\le R< \pi-\abs{T} \\  
		\omega\in \SSS^{2}
		\end{array}
		};
\end{equation}
see the forthcoming Figure~\ref{fig:Penrose}. The map $\Pcal$ is conformal in the sense that
\begin{equation}\label{eq:conformal_transformation}
	dT^2-dR^2-\sin^2 R\,d\omega^2 = \Omega^2\Tonde{ dt^2 - dr^2-r^2d\omega^2},
\end{equation}
where $d\omega^2$ denotes the metric tensor of $\SSS^2$ and the conformal factor $\Omega$ is the scalar field given by
\begin{equation}\label{eq:penrose_conformal_factor}
	\Omega = 2(1+(\xplus)^2)^{-{1/2}}(1+(\xminus)^2)^{-{1/2}} = 2\cos \Xplus \cos \Xminus,
\end{equation}
where the change of variable~\eqref{eq:Penrose_map} is implicit. From now on we omit this change of variable without further specification. 

If $v$ is a scalar field on $\R^{1+3}$, we define a scalar field $V$ on $\Pcal(\R^{1+3})$ by the equation 
\begin{equation}\label{eq:Penrose_field_transformation}
 v=\Omega V, 
\end{equation}
which implies that, at $t=0$ (corresponding to $T=0$), 
\begin{equation}\label{eq:Penrose_field_initial_data}
 \begin{array}{cc}
  \left.v\right|_{t=0} = \left.(\Omega V)\right|_{T=0}, & \left.\partial_tv\right|_{t=0}= \left.(\Omega^{2}\partial_T V)\right|_{T=0}.
 \end{array}
\end{equation}

The scalar field $V$ is called the \emph{Penrose transform} of $v$. We remark that~$v$ is radially symmetric if and only if $V$ depends only on $\Xminus,\Xplus$, and in this case, using \eqref{eq:Penrose_field_transformation} and~\eqref{eq:Penrose_map},   we obtain 
\begin{equation}\label{eq:dAlembert_transf}
\begin{split}
 r\Box\, v &= (\partial_t^2-\partial_r^2)(rv) \\ 
 &= \Omega^2\partial_{\Xplus}\partial_{\Xminus}(r\Omega V) \\ 
 &= \Omega^2\partial_{\Xplus}\partial_{\Xminus}(\sin(R) V),
 \end{split}
\end{equation}
where we used the formula $r\Omega=\sin R$, which can be immediately obtained from \eqref{eq:conformal_transformation} by comparing the factors of $d\omega^2$.

The Penrose transform is relevant in our context, because applying it to~$\vboldstartheta$, as defined in \eqref{eq:ustartheta_fboldtheta}, we obtain a simple expression;
\begin{equation}\label{eq:Ustar_is_cosine}
	\begin{array}{ccc}
	\left.V_{\theta}\right|_{T=0}=\cos \theta ,& \left.\partial_T V_{\theta}\right|_{T=0}=\sin \theta, & \text{and } V_{\theta}=\cos \Tonde{T - \theta}.
	\end{array}
\end{equation}
\begin{prop}\label{prop:SustarPoly}
 It holds that 
 \begin{equation}\label{eq:SustarPoly}
    \Scal(v_\theta) = \frac{\pi^3}{128}\Tonde{ 24\cos^2\theta + 5}.
 \end{equation}
\end{prop}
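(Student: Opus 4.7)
The plan is to transfer the computation to the Einstein cylinder $\R\times\SSS^3$ via the Penrose transform, where the linear maximizer has the very simple form $v_\theta=\Omega V_\theta$ with $V_\theta=\cos(T-\theta)$. Setting $W:=\Omega^{-1}\antibox(v_\theta^3)$, the volume-form identity $\Omega^4\,dt\,dx=dT\,dV_{\SSS^3}$ converts the Minkowski integral into
\begin{equation*}
\Scal(v_\theta)=\iint_{\R^{1+3}}v_\theta^3\,\antibox(v_\theta^3)\,dt\,dx=\iint_{\Pcal(\R^{1+3})}V_\theta^3\,W\,dT\,dV_{\SSS^3}.
\end{equation*}
By the conformal transformation rule for the scalar wave operator in four dimensions, $\Box_g(\Omega W)=\Omega^3(\Box_{\tilde g}+\tfrac{1}{6}R_{\tilde g})W$, together with $R_{\tilde g}=6$ for the Einstein cylinder, the Minkowski equation $\Box\,\antibox(v_\theta^3)=v_\theta^3$ becomes $(\Box_{\tilde g}+1)W=\cos^3(T-\theta)$ on $\Pcal(\R^{1+3})$.

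Since the source is independent of the $\SSS^3$ variables, I look for a spatially constant solution, reducing the PDE to the ODE $(\partial_T^2+1)W=\tfrac{3}{4}\cos(T-\theta)+\tfrac{1}{4}\cos(3(T-\theta))$. The retarded condition $\lim_{t\to-\infty}\lVert\wbold(t)\rVert_{\Hcaldot^{1/2}}=0$ translates to the initial conditions $W(-\pi)=\partial_T W(-\pi)=0$ at past timelike infinity $i^-$. The resonant first term forces a particular solution of the form $\tfrac{3T}{8}\sin(T-\theta)$, while the second term gives $-\tfrac{1}{32}\cos(3(T-\theta))$; combined with homogeneous corrections $c_1\cos T+c_2\sin T$ fixed by the initial conditions, this produces an explicit closed form for $W(T)$.

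Finally, since the spatial slice of $\Pcal(\R^{1+3})$ at fixed $T$ is a geodesic ball of radius $\pi-\lvert T\rvert$ in $\SSS^3$, its three-dimensional volume is $V(T)=2\pi(\pi-\lvert T\rvert)+\pi\sin(2\lvert T\rvert)$, and substituting yields
\begin{equation*}
\Scal(v_\theta)=\int_{-\pi}^{\pi}\cos^3(T-\theta)\,W(T)\,V(T)\,dT,
\end{equation*}
an elementary integral that expands via product-to-sum identities. Using the evenness of $V$ in $T$, terms odd in $T$ drop out, and after collecting the surviving contributions the $\theta$-dependence consolidates (in fact only through $\cos 2\theta$) to give the claimed value $\frac{\pi^3}{128}(24\cos^2\theta+5)$. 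The main obstacle I anticipate is the rigorous identification of the boundary conditions: that the spatially constant $W$ with $W(-\pi)=\partial_T W(-\pi)=0$ really is the Penrose transform of the retarded $\antibox(v_\theta^3)$ must be confirmed by checking the $\Hcaldot^{1/2}(\R^3)$-decay of $w=\Omega W$ as $t\to-\infty$, leveraging the asymptotics $\Omega=O(t^{-2})$ and $W=O((T+\pi)^2)$ near $i^-$. Once this is secured, the remainder is a bookkeeping exercise in trigonometric integration.
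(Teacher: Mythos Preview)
Your Penrose-transform strategy is the same as the paper's, but there is a genuine gap in the identification of $W$. The retarded condition $\lVert\wbold(t)\rVert_{\Hcaldot^{1/2}}\to 0$ as $t\to-\infty$ does \emph{not} translate to Cauchy data at past timelike infinity $i^-=\{T=-\pi\}$. As the paper's Figure~\ref{fig:Penrose} makes explicit, the slice $\{t=t_0\}$ converges under $\Pcal$ to past \emph{null} infinity $\mathscr{I}^-=\{X^-=-\pi/2\}$, which is a null hypersurface along which $T$ ranges over all of $(-\pi,0)$. The correct condition is therefore a Goursat datum: $W_\theta|_{\mathscr{I}^-}=0$ (equivalently $\tilde W_\theta|_{X^-=-\pi/2}=0$ in the paper's notation). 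A spatially constant solution of your ODE with $W_s(-\pi)=W_s'(-\pi)=0$ satisfies $W_s(T)\neq 0$ for $T\in(-\pi,0)$ (the source is nonzero there), so it cannot meet this Goursat condition; the actual $W_\theta$ is \emph{not} a function of $T$ alone.

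Your anticipated verification would accordingly fail: the difference $h:=\Omega W_s-\antibox(v_\theta^3)$ is a free wave on $\R^{1+3}$, hence $\lVert\hbold(t)\rVert_{\Hcaldot^{1/2}}$ is constant in $t$; since $W_s\neq W_\theta$ it is a nonzero constant, and $\lVert(\Omega W_s,\partial_t(\Omega W_s))(t)\rVert_{\Hcaldot^{1/2}}$ does not tend to zero. (Your asymptotics $\Omega=O(t^{-2})$, $W_s=O((T+\pi)^2)$ only probe a neighbourhood of $i^-$, not the rest of $\mathscr{I}^-$.) Concretely, in light-cone coordinates one finds $\sin R\,W_s(T)-\tilde W_\theta=F(X^+)-F(X^-)$ with $F(a)=\mathrm{const}+\cos a\,W_s(a-\tfrac\pi2)$, and the extra contribution to the integral equals
\[
3\pi^2\int_{-\pi}^{0}\sin T\,\sin(2T-\theta)\,W_s(T)\,dT,
\]
which for $\theta=0$ evaluates to $-\tfrac{21\pi^3}{128}$; your method would give $\Scal(v_0)=\tfrac{\pi^3}{16}$ instead of the correct $\tfrac{29\pi^3}{128}$. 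The paper avoids this by working directly with the Goursat problem for $\tilde W_\theta=\sin R\,W_\theta$ in light-cone coordinates, imposing $\tilde W_\theta|_{X^-=-\pi/2}=0$ together with the regularity condition $\tilde W_\theta|_{R=0}=0$, which singles out the double integral \eqref{eq:Wtheta_general} with $F+G\equiv 0$.
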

\begin{proof}
Let $w_\theta:=\antibox(\vstartheta^3)$. Applying the Penrose transform~\eqref{eq:Penrose_field_transformation} to the integral~\eqref{eq:ScalFunctional} that defines $\Scal$, we obtain
\begin{equation}\label{eq:SustarPenrose}
\begin{split}
 \Scal(\vstartheta)&=\iint_{\Pcal(\R^{1+3})} V^3_{\theta} W_\theta\, dTdS=4\pi\int_{-\pi}^\pi\int_0^{\pi-\abs{T}}\cos^3(T-\theta)W_\theta \sin^2 R\, dTdR,
 \end{split}
\end{equation}
where $dS=\sin^2 R\, dR dS_{\SSS^2}$ denotes the volume element on $\SSS^3$. Here we used that $\Omega^4\,dtdx = dT dS$, which follows from~\eqref{eq:conformal_transformation}. Now the change of variable~\eqref{eq:curved_lightlike_coords} yields
\begin{equation}\label{eq:SustarLightlike}
  \Scal(\vstartheta)=8\pi\int\!\!\int_{\Tcal}\cos^3(\Xplus + \Xminus - \theta)\sin(\Xplus-\Xminus)\Wtilde_\theta\, d\Xminus d\Xplus,
\end{equation}
where 
\begin{equation}\label{eq:Wtildetheta}
    \Wtilde_\theta:=\sin(R)W_\theta,
\end{equation}
and $\Tcal$ is the half-square defined in~\eqref{eq:image_penrose_lightlike}. We will prove that $$\Wtilde_\theta(\Xplus, \Xminus)=-\Wtilde_\theta(\Xminus, \Xplus),$$ so that the integrand of~\eqref{eq:SustarLightlike} is symmetric under permutation of the variables, allowing us to consider the integral over the full square~$[-\tfrac\pi2, \tfrac\pi 2]^2$.

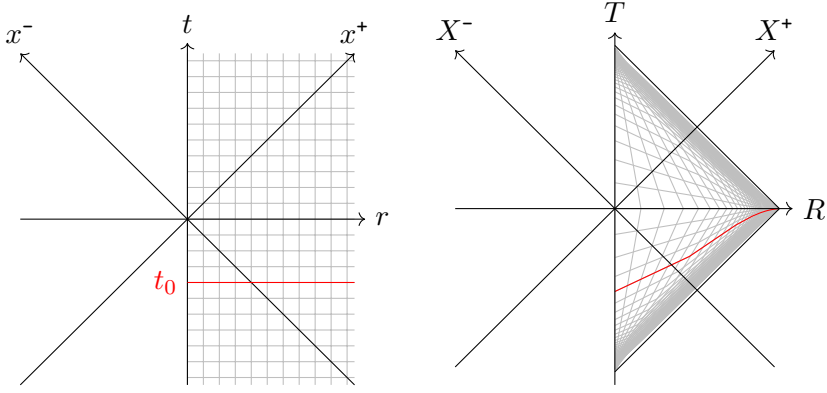
\begin{figure}
\flushleft
\begin{tabular}{cc}
\begin{tikzpicture}[scale=0.7]
\draw[step=0.3, gray!50!white] (0, -3.14) grid (3.14, 3.14);
\draw[->] (-3.14, 0) -> (3.34,0) node[anchor=west] {$r$};
\draw[->] (0, -3.14) -> (0, 3.34) node[anchor=south]{$t$};
\draw[->] (-3.14, -3.14) -- (3.14, 3.14) node[anchor=south] {$\xplus$}; 
\draw[<-] (-3.14, 3.14) node[anchor=south] {$\xminus$} -- (3.14, -3.14);
\draw[red] (0, -1.2) node[anchor=east] {$t_0$}-- (3.14, -1.2) node[anchor=west]{} ;
\end{tikzpicture}

&

\begin{tikzpicture}[scale=0.7]
\foreach \t in {-8, -7.75, ..., 8}
    \draw[gray!50!white, domain=0:30] plot({rad(atan(\t+\x)-atan(\t-\x))}, {{rad(atan(\t+\x)+atan(\t-\x))}});
\foreach \r in {0, 0.25, ..., 5}
	\draw[gray!50!white, domain=-20:20] plot({rad(atan(\x+\r)- atan(\x-\r))}, {{rad(atan(\x+\r)+ atan(\x-\r))}});
	 \draw[red, domain=0:30] plot({rad(atan(-1+\x)- atan(-1-\x))}, {{rad(atan(-1+\x) + atan(-1-\x))}});
\draw[->] (-3, 0) -> (3.34,0) node[anchor=west] {$R$};
\draw[->] (0, -3.34) -> (0, 3.34) node[anchor=south]{$T$};
\draw[->] (-3, -3) -- (3, 3) node[anchor=south] {$\Xplus$}; 
\draw[<-] (-3, 3) node[anchor=south] {$\Xminus$} -- (3, -3);
\draw (0, -pi+0.05) 
-- (pi-0.05, 0) 
-- (0, pi-0.05) 
;
\end{tikzpicture}
\end{tabular}

\caption{As $t_0\to-\infty$, the image under the Penrose map~$\Pcal$ of the hypersurface $t=t_0$ converges uniformly to the hypersurface $\Xminus=-\pi/2$.}
\label{fig:Penrose}
\end{figure}

We compute $\Wtilde_\theta$ explicitly. From the definition of $\antibox$ it follows that
\begin{equation}\label{eq:wtheta_eqn}
 \begin{cases} 
  r\Box w_\theta = r\vstartheta^3, & \text{ on }\R^{1+3}, \\ 
  \Ds \lim_{t\to -\infty} \norm{\wbold_{\theta}}_{\Hcaldot^{1/2}} =0,
 \end{cases}
\end{equation}
and using~\eqref{eq:Penrose_field_transformation}, \eqref{eq:dAlembert_transf}, and the aforementioned formula $r\Omega=\sin R$, we obtain
\begin{equation}\label{eq:TransformEquation}
  \begin{array}{cc}
  r\Box w_\theta = \Omega^2\partial_{\Xplus}\partial_{\Xminus}(\sin(R) W_\theta), &\text{and } r\vstartheta^3=\Omega^2\sin(R) V_{\theta}^3,
  \end{array}
\end{equation}
so the factors of $\Omega^2$ simplify and we obtain from~\eqref{eq:wtheta_eqn} the differential equation
\begin{equation}\label{eq:Wtheta_diff_eqn}
 \partial_{\Xplus}\partial_{\Xminus}\Wtilde_\theta=\sin(\Xplus-\Xminus) \cos^3(\Xplus + \Xminus -\theta).
\end{equation}
The general solution $ \Wtilde_\theta$ of this can be written 
\begin{equation}\label{eq:Wtheta_general}
  \begin{split}
\int_{-\frac\pi2}^{\Xminus}\int_{-\frac\pi2}^{\Xplus} \sin(Z-Y)\cos^3(Y+Z-\theta)\,dYdZ
 +F(\Xplus)+G(\Xminus), 
 \end{split}
\end{equation}
where $F$ and $G$ are arbitrary smooth functions. 

We claim that 
\begin{equation}\label{eq:FplusGvanish}
F(\Xplus)+G(\Xminus)\equiv 0.
\end{equation}
To prove this, we notice that for each fixed $t_0\in\R$, the hypersurface of $\R^{1+3}$ of equation $t=t_0$ is mapped by $\Pcal$ to the hypersurface of equations
\begin{equation}\label{eq:t_slice}
 \begin{array}{cc}
  \Xminus=\arctan(t_0-r), & \Xplus=\arctan(t_0+r), 
 \end{array}
\end{equation}
(see Figure~\ref{fig:Penrose}), which, as $t_0\to-\infty$, converges uniformly to the hypersurface $\Xminus=-\pi/2$. The condition $\norm{\wbold_\theta(t)}_{\Hcaldot^{1/2}}\to 0$ thus implies that $$\Wtilde_\theta|_{\Xminus=-\frac\pi2}=0.$$ We obtain another condition by observing that, since $w_\theta$ is smooth and radially symmetric, the function $W_\theta$ must be regular at $R=0$, which implies that $\Wtilde_\theta|_{R=0}=0$. Now the integral of~\eqref{eq:Wtheta_general} satisfies both conditions. The first one is obvious, while the second follows from symmetry, since 
$$\Xminus|_{R=0}=\Xplus|_{R=0},$$ 
so the domain of integration is symmetric under permutation of the variables $Y, Z$, while the integrand function changes sign. This proves~\eqref{eq:FplusGvanish}. 

Returning to~\eqref{eq:SustarLightlike}, the fact that $\Wtilde_\theta(\Xplus, \Xminus)=-\Wtilde_\theta(\Xminus, \Xplus)$ is immediate from the explicit form of $\Wtilde_\theta$. Thus the integral in~\eqref{eq:SustarLightlike} can be replaced by the integral over $[-\tfrac\pi2, \tfrac\pi2]^2$, with a multiplicative factor of $\tfrac12$. More precisely, letting
\begin{equation}\label{eq:F_coscube_sin}
  F(Y, Z, \theta):=\sin(Z-Y)\cos^3(Y+Z-\theta),
\end{equation}
we have the formula
\begin{equation}\label{eq:SustarLightlikeProcessed}
 \Scal(\vstartheta)=4\pi\int_{-\frac\pi2}^\frac{\pi}{2} \int_{-\frac{\pi}{2}}^{\frac{\pi}{2}}\int_{-\frac\pi2}^{\Xminus} \int_{-\frac\pi2}^{\Xplus} F(\Xminus, \Xplus, \theta)F(Y, Z, \theta)\, d\Xminus d\Xplus dY dZ,
\end{equation}
which allows for explicit computation, yielding~\eqref{eq:SustarPoly}.

\end{proof}                                                                                                                                                                                                                                                                                                                

Combining Propositions~\ref{prop:SoneIsSonetheta} and~\ref{prop:SustarPoly} we obtain the value of the constant.

\begin{cor}\label{cor:ScalThetaMax}
  The function $\sigma \Scal(v_\theta)$ attains its maximum for $\theta$ of the form $k\pi$ when $\sigma >0$, and for $\theta$ of the form $\frac\pi 2+ k\pi$ when $\sigma <0$; here $k\in\Z$. The constant $\Scal_1$ in Theorem~\ref{thm:MainThree} can be written
  \begin{equation}\label{eq:ScalOne_explicit}
    \Scal_1=\begin{cases}\Ds  \frac{\Scal(v_{0})}{\abs{\SSS^3}^3}=\frac{29}{128}\Big(\frac{\pi}{\abs{\SSS^3}}\Big)^3, & \sigma >0, \\ \Ds
             \frac{\Scal(v_{\pi/2})}{\abs{\SSS^3}^3}=\frac{5}{128}\Big(\frac{\pi}{\abs{\SSS^3}}\Big)^3, & \sigma<0.
            \end{cases}
  \end{equation}
  \end{cor}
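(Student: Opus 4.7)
The proof is essentially a direct optimization: the two preceding propositions reduce the problem to maximizing the one-variable function on the right of Proposition~\ref{prop:SustarPoly}, so the plan is simply to combine these ingredients cleanly.

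First, I would rewrite the polynomial $24\cos^2\theta + 5$ obtained in Proposition~\ref{prop:SustarPoly} as an affine function of $\cos^2\theta \in [0,1]$ with strictly positive slope. Consequently, $\Scal(v_\theta)$ is bounded above by $\frac{\pi^3}{128}\cdot 29$, attained exactly when $\cos^2\theta = 1$, i.e.\ $\theta\in \pi\Z$; and it is bounded below by $\frac{\pi^3}{128}\cdot 5$, attained exactly when $\cos^2\theta = 0$, i.e.\ $\theta \in \frac{\pi}{2} + \pi \Z$. Since $\Scal(v_\theta) > 0$ for all $\theta$, the sign of $\sigma\Scal(v_\theta)$ coincides with the sign of $\sigma$, so the product $\sigma\Scal(v_\theta)$ is maximized at $\theta\in\pi\Z$ when $\sigma>0$ and at $\theta\in \frac\pi2+\pi\Z$ when $\sigma<0$.

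Next I would invoke Proposition~\ref{prop:SoneIsSonetheta}, which gives
\[
\sigma\Scal_1 = \max_{\theta\in\SSS^1}\frac{\sigma\Scal(v_\theta)}{\abs{\SSS^3}^3},
\]
so dividing the two extremal values above by $\abs{\SSS^3}^3$ immediately yields the two cases of~\eqref{eq:ScalOne_explicit}. To reconcile the result with the form stated in Theorem~\ref{thm:MainThree}, I would substitute $\abs{\SSS^3}=2\pi^2$, which gives $(\pi/\abs{\SSS^3})^3 = 1/(8\pi^3)$, and then
\[
\frac{29}{128}\cdot\frac{1}{8\pi^3}=\frac{29}{2^{10}\pi^3},\qquad \frac{5}{128}\cdot\frac{1}{8\pi^3}=\frac{5}{2^{10}\pi^3},
\]
matching~\eqref{eq:Scal_one}.

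There is no real obstacle here: all the work has been done in Propositions~\ref{prop:SoneIsSonetheta} and~\ref{prop:SustarPoly}, and the corollary is just the elementary optimization of a quadratic polynomial in $\cos\theta$ combined with a bookkeeping check of the numerical constants. The only thing one needs to be careful about is remembering that the sign of $\sigma$ switches ``max'' and ``min'' of $\Scal(v_\theta)$, which is precisely why the extremal values of $\theta$ differ between the focusing and defocusing cases.
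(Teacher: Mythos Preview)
Your proposal is correct and matches the paper's approach exactly: the paper simply states that the corollary follows by combining Propositions~\ref{prop:SoneIsSonetheta} and~\ref{prop:SustarPoly}, and your write-up spells out precisely that combination together with the elementary optimization of $24\cos^2\theta+5$ and the numerical check. There is nothing to add or correct.
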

 \begin{rem}\label{rem:goursat}
  In the proof of Proposition~\ref{prop:SoneIsSonetheta} we solved a boundary value problem for the wave equation on $\R\times \SSS^3$ with data on a light cone. This is known in the literature as \emph{Goursat problem}; see~\cite{Hor90,BaSeZh90}.
  \end{rem}

\section{Existence of maximizers}
We follow the lines of~\cite[Section~2]{DuMeRo11} to show that the supremum~\eqref{eq:I_functional} is attained for small enough values of $\delta$. We recall from Proposition~\ref{prop:small_data_theory} that $\Phi(\fbold)=u$ denotes the solution to the fixed point equation associated to~\eqref{eq:cubic_NLW}
\begin{equation}\label{eq:FixPtEqn}
  u=S\fbold + \sigma \antibox(u^3),
\end{equation}
provided that such a solution exists and is unique. We require in this section the concentration-compactness tools of Appendix~\ref{app:ConcComp}.

\begin{lem}\label{lem:ConcComp}
Suppose that $\delta>0$ satisfies
 \begin{enumerate}
  \item Scattering:   $I(\delta)<\infty$;
  \item Superadditivity: for all $\alpha\in(0, \delta)$, 
  \begin{equation}\label{eq:Superadditivity}
   I(\sqrt{\delta^2-\alpha^2})+I(\alpha)<I(\delta);
  \end{equation}
  \item Upper semicontinuity: for any sequence $\alpha_n\le \delta$, 
  \begin{equation}\label{eq:UpSemiCont}
    \limsup_{n\to\infty} I(\alpha_n)\le I(\limsup_{n\to\infty} \alpha_n).
  \end{equation}
 \end{enumerate}
  Then there exists a solution $u$ to~\eqref{eq:cubic_NLW} such that 
  \begin{equation}\label{eq:uA_maximizes}
  \begin{array}{ccc}
  \Ds   \lim_{t\to-\infty}\norm{\ubold(t)}_{\Hcaldot^{1/2}}=\delta &\text{ and }& \Ds \norm{u}_{L^4(\R^{1+3})}^4= I(\delta).
  \end{array}
  \end{equation}
\end{lem}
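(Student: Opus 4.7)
\medskip

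The plan is to follow a standard concentration-compactness argument in the style of Duyckaerts--Merle--Roudenko. Take a maximizing sequence $\fbold_n\in\Hcaldot^{1/2}(\R^3)$ with $\|\fbold_n\|_{\Hcaldot^{1/2}}\le\delta$ and $\|\Phi(\fbold_n)\|_{L^4}^4\to I(\delta)$; denote $u_n:=\Phi(\fbold_n)$. By the linear profile decomposition promised in Appendix~A, after extraction one can write, for every $J$,
\begin{equation}\label{eq:ProfileDec}
  \fbold_n=\sum_{j=1}^J (\bm{\phi}^j)_{\Lambda_n^j}+\bm{r}_n^J,
\end{equation}
where the $\Lambda_n^j$ are Poincaré transformations of the type considered in Section~\ref{sym}, the profiles $\bm{\phi}^j$ are mutually orthogonal in the usual asymptotic sense, $S\bm{r}_n^J\to 0$ in $L^4$ as $J,n\to\infty$, and the Pythagorean expansion
\begin{equation}\label{eq:Pythag}
  \|\fbold_n\|_{\Hcaldot^{1/2}}^2=\sum_{j=1}^J\|\bm{\phi}^j\|_{\Hcaldot^{1/2}}^2+\|\bm{r}_n^J\|_{\Hcaldot^{1/2}}^2+o_n(1)
\end{equation}
holds. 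Set $\alpha_j:=\|\bm{\phi}^j\|_{\Hcaldot^{1/2}}$ and $\beta_n^J:=\|\bm{r}_n^J\|_{\Hcaldot^{1/2}}$.

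Next I would pass to the nonlinear picture. For $\delta$ small each $\alpha_j\le\delta$ lies in the perturbative regime of Proposition~\ref{prop:small_data_theory}, so the nonlinear profile $U^j:=\Phi(\bm{\phi}^j)$ is well defined, and by Theorem~\ref{thm:NonlinearPoincareAction} its boosted/translated/dilated version $U^j_{\Lambda_n^j}$ solves the same fixed-point equation with data $(\bm{\phi}^j)_{\Lambda_n^j}$. The standard nonlinear-superposition argument (Strichartz bootstrap plus orthogonality of the $\Lambda_n^j$) gives
\begin{equation}\label{eq:NLSup}
  u_n=\sum_{j=1}^J U^j_{\Lambda_n^j}+S\bm{r}_n^J+\bm{\eps}_n^J,\qquad \limsup_{J\to\infty}\limsup_{n\to\infty}\|\bm{\eps}_n^J\|_{L^4}=0,
\end{equation}
and the asymptotic orthogonality propagates to $L^4$-decoupling:
\begin{equation}\label{eq:LfourDec}
  \|u_n\|_{L^4}^4=\sum_{j=1}^J\|U^j\|_{L^4}^4+\|S\bm{r}_n^J\|_{L^4}^4+o_n(1)+o_J(1),
\end{equation}
using here that $\|U^j_{\Lambda_n^j}\|_{L^4}=\|U^j\|_{L^4}$, a consequence of Theorem~\ref{thm:NonlinearPoincareAction}.

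Now I combine~\eqref{eq:LfourDec} with the definition of $I$. Since $\|U^j\|_{L^4}^4\le I(\alpha_j)$ and $\|S\bm{r}_n^J\|_{L^4}^4\le \Scal_0(\beta_n^J)^4\le I(\beta_n^J)+O(\delta^6)$, letting $J,n\to\infty$ and using~\eqref{eq:Pythag} I obtain
\begin{equation}\label{eq:Idecoup}
  I(\delta)\le \sum_j I(\alpha_j)+\limsup_n I(\beta_n),
\end{equation}
subject to $\sum_j\alpha_j^2+\limsup_n\beta_n^2\le\delta^2$. The strict superadditivity hypothesis~(2) forces all but one term to vanish, and the upper semicontinuity hypothesis~(3) is what lets me pass the $\limsup$ in $n$ inside $I$ for the remainder term. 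Thus exactly one profile $\bm{\phi}^1$ survives with $\alpha_1=\delta$, the remainder $\bm{r}_n$ vanishes in $\Hcaldot^{1/2}$, and the decomposition degenerates to $\fbold_n=(\bm{\phi}^1)_{\Lambda_n^1}+o(1)$.

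Finally, by the Lorentz invariance of $I$ established in Section~\ref{sym} (which relied on Lemma~\ref{lem:LorentzCommute} and Theorem~\ref{thm:NonlinearPoincareAction}), we may undo the transformations $\Lambda_n^1$ without changing either the constraint or the $L^4$ norm. This produces a single $\fbold:=\bm{\phi}^1\in\Hcaldot^{1/2}$ with $\|\fbold\|_{\Hcaldot^{1/2}}=\delta$ and $\|\Phi(\fbold)\|_{L^4}^4=I(\delta)$, which is the desired maximizer. The main obstacle in this programme is the $L^4$-decoupling~\eqref{eq:LfourDec}: one must check that $u_n$ is genuinely approximated by the nonlinear superposition~\eqref{eq:NLSup} in the Strichartz norm, which requires a small-data perturbation lemma together with the orthogonality of the frames $\Lambda_n^j$; everything else is bookkeeping built on top of the hypotheses~(1)--(3) and the Lorentz covariance established earlier.
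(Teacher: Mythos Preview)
Your overall strategy---profile decomposition, nonlinear superposition, then elimination of all but one profile via hypotheses (1)--(3)---is exactly the paper's. But the step ``the strict superadditivity hypothesis~(2) forces all but one term to vanish'' does not follow from your multi-term inequality $I(\delta)\le\sum_j I(\alpha_j)+\limsup_n I(\beta_n)$. Hypothesis~(2) is a \emph{two-term} inequality at the fixed level~$\delta$ only: $I(\alpha)+I(\sqrt{\delta^2-\alpha^2})<I(\delta)$. To pass from $I(\delta)\le\sum_j I(\alpha_j)$ with $\sum_j\alpha_j^2\le\delta^2$ to ``at most one $\alpha_j$ is nonzero'' you would have to collapse $\sum_{j\ge 2}I(\alpha_j)\le I\big((\sum_{j\ge2}\alpha_j^2)^{1/2}\big)$, which is superadditivity at a level \emph{strictly below}~$\delta$ and is not among the hypotheses.

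The paper sidesteps this by peeling off \emph{one} profile at a time: for each fixed $j$, set $\gbold_n:=\fbold_n-(\bm\phi^j)_{\Lambda_n^j}$ and let $W_n:=\Phi(\gbold_n)$, which is a genuine nonlinear solution. The nonlinear decoupling then has only two terms, $\|u_n\|_{L^4}^4=\|U^j\|_{L^4}^4+\|W_n\|_{L^4}^4+o(1)$, and since $\|W_n\|_{L^4}^4\le I(\|\gbold_n\|_{\Hcaldot^{1/2}})$ with $\limsup_n\|\gbold_n\|_{\Hcaldot^{1/2}}^2\le\delta^2-\alpha_j^2$, hypothesis~(3) followed by~(2) yields $\alpha_j\in\{0,\delta\}$ directly. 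A minor point: your bound $\|S\bm r_n^J\|_{L^4}^4\le I(\beta_n^J)+O(\delta^6)$ is neither needed nor justified by the hypotheses of the lemma; the linear remainder vanishes in $L^4$ as $J,n\to\infty$ and can simply be dropped.
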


\begin{proof}
 Let $u_n$ be a maximizing sequence of $I$, that is
 \begin{equation}\label{eq:un_maxim}
    \begin{array}{ccc}
      u_n=\Phi(\fbold_n), &\ \norm{\fbold_n}_{\Hcaldot^{1/2}}\le\delta, &\ \Ds I(\delta)=\lim_{n\to \infty} \norm{u_n}_{L^4}^4.
    \end{array}
 \end{equation}
  We consider a profile decomposition~\eqref{eq:profile_decomposition} of the sequence $\fbold_n$, and we claim that all profiles $\{\Fbold^j\ :\ j\in\N_{\ge 1}\}$ vanish but one. 
  
  To prove this, we denote by $\gbold_n$ the sequence obtained by subtracting the profile $\Fbold^j$ from $\fbold_n$, that is
  \begin{equation}\label{eq:gbold_n}
    \gbold_n=\fbold_n - \left.\lambda^{(j)}_n S\Fbold^j\circ \Lambda^j_n\right|_{t=0},
  \end{equation}
  and we construct the corresponding solution $W_n=\Phi(\gbold_n)$. By the nonlinear profile decomposition, Corollary~\ref{cor:nonlinear_profdecomp}, we have that
  \begin{equation}\label{eq:un_decomp}
    u_n(t, x)= \lambda^{(j)}_n U^j(\Lambda^j_n(t, x)) + W_n(t, x) + h_n(t, x), 
  \end{equation}
  where $\norm{h_n}_\Lfour+\sup_{t\in\R} \norm{\hbold_n(t)}_{\Hcaldot^{1/2}} \to 0$ as $n\to\infty$. By the Pythagorean expansion~\eqref{eq:Pythagorean_energy} of the $\PhaseSpace$ norm, 
  \begin{equation}\label{eq:A_F_gn}
    \delta^2\ge\norm{\fbold_n}^2_{{\Hcaldot^{1/2}}} = \norm{\Fbold^j}_{{\Hcaldot^{1/2}}}^2 + \norm{\gbold_n}_{\Hcaldot^{1/2}}^2 +o(1), 
  \end{equation}
  and by Remark~\ref{rem:LfourOrtho}, 
  \begin{equation}\label{eq:LfourWn}
    \norm{u_n}_{L^4}^4 = \norm{ U^j}_{L^4}^4  + \norm{W_n}_{L^4}^4+ o(1). 
  \end{equation}
  Since $u_n$ is a maximizing sequence, we infer from~\eqref{eq:A_F_gn} and~\eqref{eq:LfourWn} 
  \begin{equation}\label{eq:FinishNoDichot}
    \begin{split}
    I(\delta)&=\norm{U^j}_{L^4}^4 + \limsup_{n\to\infty} \norm{W_n}_{L^4}^4\\ &\le I\big(\norm{\Fbold^j}_{\Hcaldot^{1/2}}\big)+I\Big(\sqrt{\delta^2-\norm{\Fbold^j}_{\Hcaldot^{1/2}}^2}\Big), 
    \end{split}
  \end{equation}
  where we also used the upper semicontinuity property~\eqref{eq:UpSemiCont}. Now, the superadditivity property~\eqref{eq:Superadditivity} implies that 
  \begin{equation}\label{eq:vanishing_or_compactness} 
  \begin{array}{rcrc}
  \text{either} &
   \norm{\Fbold^j}_{\Hcaldot^{1/2}}=0, & 
   \text{or} &
   \norm{\Fbold^j}_{\Hcaldot^{1/2}}=\delta.
   \end{array}
  \end{equation}
  It cannot be that $\Fbold^j=\obold$ for all $j\ge 1$, for otherwise the nonlinear profile decomposition~\eqref{eq:nonlinear_profdecomp} would give the contradiction $I(\delta)=0$. On the other hand, if $\lVert \Fbold^j\rVert_{\Hcaldot^{1/2}}=\delta$ then, by~\eqref{eq:A_F_gn}, $\norm{\gbold_n}_{\Hcaldot^{1/2}}\to 0$ as $n\to \infty$, which means that $\Fbold^k=\obold$ for all $k\ne j$. 
  
  We have thus proven that there exists one and only one nonvanishing profile $\Fbold$ for the sequence $\fbold_n$. Letting $U$ denote the corresponding nonlinear profile, Corollary~\ref{cor:nonlinear_profdecomp} implies that $I(\delta)=\norm{U}_{L^4}^4$, and the proof is complete.
  \end{proof}                                                                                                                                                                                                                                                                                                                
  We now turn to the proof that, if $\delta>0$ is sufficiently small, then the three properties of Lemma~\ref{lem:ConcComp} are satisfied. We already dealt with the first one in Proposition~\ref{prop:small_data_theory}. The following lemma implies the third property and will also be used in the proof of the second property.
  \begin{lem}\label{lem:sub_differentiable}
    There exists $A, C_1, C_2>0$ such that
    \begin{equation}\label{eq:I_sub_diff}
      C_1\abs\eps \delta^3 \le \abs{I(\delta+ \eps) - I(\delta)}\le C_2\abs\eps \delta^3,\quad \forall\, \eps\in(-\delta/2, \delta/2),
    \end{equation}
  whenever $\delta\in(0, A]$.
In particular, $I$ is continuous on $(0, A/2]$.
  \end{lem}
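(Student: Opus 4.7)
The plan rests on a scaling argument relating near-maximizing sequences for $I(\delta)$ and $I(\delta+\eps)$ via the linear map $\fbold\mapsto\mu\fbold$ with $\mu=(\delta+\eps)/\delta$. Since $|\eps|\le\delta/2$ we have $\mu\in[1/2,3/2]$, so that both $\fbold$ and $\mu\fbold$ remain in the smallness regime of Proposition~\ref{prop:small_data_theory} once $\delta$ is small enough. The crucial technical step is the estimate
\[ \norm{\Phi(\mu\fbold)}_{L^4(\R^{1+3})}^4 = \mu^4\norm{\Phi(\fbold)}_{L^4(\R^{1+3})}^4 + O(|\eps|\delta^5), \]
valid for $\norm{\fbold}_{\PhaseSpace}\le\delta$. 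I would prove it by setting $\tilde u:=\mu^{-1}\Phi(\mu\fbold)$ and observing that $\tilde u$ satisfies $\tilde u=S\fbold+\mu^2\sigma\antibox(\tilde u^3)$, which differs from $u:=\Phi(\fbold)=S\fbold+\sigma\antibox(u^3)$ only in the nonlinear coefficient. Subtracting and applying the inhomogeneous Strichartz estimate of Proposition~\ref{prop:ConfStrichartz} together with the H\"older bound $\norm{u^3-\tilde u^3}_{L^{4/3}}\le C(\norm{u}_{L^4}^2+\norm{\tilde u}_{L^4}^2)\norm{u-\tilde u}_{L^4}$ gives
\[ \norm{u-\tilde u}_{L^4}\le C\delta^2\norm{u-\tilde u}_{L^4}+C|1-\mu^2|\delta^3, \]
and for $\delta$ small the first term on the right is absorbed, yielding $\norm{u-\tilde u}_{L^4}=O(|\eps|\delta^2)$. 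Since $u,\tilde u=O(\delta)$ in $L^4$, the identity $a^4-b^4=(a-b)(a^3+a^2b+ab^2+b^3)$ turns this into $|\norm{u}_{L^4}^4-\norm{\tilde u}_{L^4}^4|=O(|\eps|\delta^5)$, and multiplying by $\mu^4=O(1)$ gives the displayed estimate. Expanding $\mu^4=1+4\eps/\delta+O((\eps/\delta)^2)$ yields the working form
\[ \norm{\Phi(\mu\fbold)}_{L^4}^4 = \norm{\Phi(\fbold)}_{L^4}^4+\frac{4\eps}{\delta}\norm{\Phi(\fbold)}_{L^4}^4+O(\eps^2\delta^2+|\eps|\delta^5). \]

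For the upper bound $|I(\delta+\eps)-I(\delta)|\le C_2|\eps|\delta^3$, I would take a near-maximizing sequence $\fbold_n$ for whichever of $I(\delta),I(\delta+\eps)$ has the larger constraint, rescale it to be admissible for the other, and apply the expanded identity. Combined with the crude bound $I(\delta)\le\Scal_0\delta^4$, which follows from the linear Strichartz inequality and the first Picard estimate in Lemma~\ref{lem:linear_nonlinear_comparison}, this delivers the desired inequality after absorbing the error terms $\eps^2\delta^2$ and $|\eps|\delta^5$ into $|\eps|\delta^3$ using $|\eps|\le\delta/2$ and the smallness of $\delta$.

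For the lower bound $|I(\delta+\eps)-I(\delta)|\ge C_1|\eps|\delta^3$ I scale in the opposite direction: a near-maximizer on the smaller side is scaled \emph{up} to produce a test element for the larger side. The expanded identity then yields $|I(\delta+\eps)-I(\delta)|\ge (4|\eps|/\delta)\,J+O(|\eps|\delta^3)$, where $J$ is the value of $I$ on the smaller side, and the argument closes once $J\gtrsim\delta^4$. That matching lower bound on $I$ comes from testing $I(\delta')\ge\norm{\Phi(\delta'\gbold)}_{L^4}^4$ with $\gbold\in\Mrom$ of unit $\PhaseSpace$ norm and any $\delta'\in[\delta/2,3\delta/2]$, which via the first Picard estimate gives $I(\delta')\ge\Scal_0(\delta')^4-O((\delta')^6)\ge c\delta^4$ for $\delta$ sufficiently small. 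The main obstacle is the scaling identity itself; the rest is routine algebraic bookkeeping. Continuity of $I$ on $(0,A/2]$ is then immediate from the upper bound, since on any compact subinterval the factor $\delta^3$ is bounded, making $I$ locally Lipschitz.
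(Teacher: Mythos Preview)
Your proof is correct and follows essentially the same route as the paper: both arguments compare $\Phi((1+\eps/\delta)\fbold)$ with $(1+\eps/\delta)\Phi(\fbold)$ and show their $L^4$ distance is $O(|\eps|\delta^2)$, then feed this into the inequality $I(\delta+\eps)\ge\norm{\Phi((1+\eps/\delta)\fbold)}_{L^4}^4$ for near-maximal $\fbold$. The only cosmetic differences are that the paper packages the contraction step as an application of the perturbation Lemma~\ref{lem:perturb} (you do the absorption by hand), and the paper quotes the full asymptotic $I(\delta)=\Scal_0\delta^4+O(\delta^6)$ from Proposition~\ref{prop:SecondOrder} where you use only the elementary lower bound obtained by testing with $\gbold\in\Mrom$; one small point to tighten is that for the lower bound you should use $(1+\eps/\delta)^4\ge 1+4\eps/\delta$ rather than the full expansion, so that the $O(\eps^2\delta^2)$ error does not compete with the main term $4\Scal_0|\eps|\delta^3$.
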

  \begin{proof}
  In fact we will prove that 
   \begin{equation}\label{eq:asympt_sub_diff}
      4\Scal_0 \eps \delta^3+O(\eps\delta^5)\le I(\delta+\eps)-I(\delta)\le 4\Scal_0\eps(\delta+\eps)^3+O(\eps\delta^5),
   \end{equation}
   from which~\eqref{eq:I_sub_diff} follows by taking $A>0$ sufficiently small. For this we let $\norm{\fbold}_{\Hcaldot^{1/2}}=\delta$ and $u=\Phi(\fbold)$ be close to maximal in the sense that 
   \begin{equation}\label{eq:approx_maximiz}
      \begin{array}{ccc}
	I(\delta)-\norm{u}_{L^4}^4=O(\eps\delta^5),
      \end{array}
   \end{equation}
   and we define
   \begin{equation}\label{eq:v_deltaeps_vtilde}
      \begin{array}{cc}
       u_{\eps}:=\Phi\Tonde{ (1+\tfrac\eps\delta) \fbold}, & \utilde_{\eps}:=(1+\tfrac\eps\delta)u.
      \end{array}
   \end{equation}
   With these definitions, since $\Box\, u+\sigma u^3=0$, we have that
   \begin{equation}\label{eq:approx_eq_vtildeeps}
     \sigma e:= \Box\,\utilde_{\eps} - \sigma \utilde_\eps^3 = -2\sigma\tfrac\eps\delta u^3 + O(\tfrac{\eps^2}{\delta^2} u^3),
   \end{equation}
  where the big-O symbol refers to the $L^{4/3}(\R^{1+3})$ norm, and since $\norm{u}_{L^4}$ is $O(\delta)$, we can conclude that $$\norm{e}_{L^{4/3}}=O(\eps\delta^2).$$ Moreover, it is clear that $\norm{\ubold_\eps(t)-\utildebold_\eps(t)}_\PhaseSpace \to 0$ as $t\to-\infty$, and so we can apply the forthcoming perturbation Lemma~\ref{lem:perturb} to obtain $$\norm{u_{\eps}-\utilde_{\eps}}_{L^4}\le C \eps \delta^2,$$
and we infer that 
   \begin{equation}\label{eq:v_deltaeps_vtilde_deltaeps}
      \norm{ u_{\eps}}_{L^4}^4 = \norm{ \utilde_{\eps}}_{L^4}^4 + O(\eps \delta^5),
   \end{equation}
   where the constant implicit in the big-O notation depends on $A$ only. 
   
   We now insert~\eqref{eq:v_deltaeps_vtilde_deltaeps} into the inequality $I(\delta+\eps)\ge \norm{u_{\eps}}_{L^4}^4$, which follows from the definition of $I$. We obtain 
   \begin{equation}\label{eq:Ideltaeps_dominates}
    \begin{split}
      I(\delta+\eps) &\ge (1+\tfrac\eps\delta)^4\norm{ u}_{L^4}^4 + O(\eps\delta^5) \\ 
      &\ge I(\delta) + 4\tfrac\eps\delta I(\delta) +O(\eps\delta^5),
    \end{split}
   \end{equation}
   where we used the elementary inequality $(1+\tfrac\eps\delta)^4\ge 1+4\tfrac\eps\delta$ and the closeness condition \eqref{eq:approx_maximiz}. Now by the asymptotic Proposition~\ref{prop:SecondOrder}, we know that $I(\delta)=\Scal_0\delta^4+O(\delta^6)$ which can be inserted to complete the proof of the first inequality in~\eqref{eq:asympt_sub_diff}.

   To prove the second inequality and complete the proof of Lemma~\ref{lem:sub_differentiable}, we let $\norm{\fbold}_{\Hcaldot^{1/2}}=\delta+\eps$ and $u=\Phi(\fbold)$ be close to maximal in the sense that 
   \begin{equation}\label{eq:udelta_eps_analog}
   \begin{array}{cc}
    I(\delta+\eps)-\norm{ u}_{L^4}^4 =O(\eps(\delta+\eps)^5).
    \end{array}
    \end{equation}
    Then we define $u_\eps:=\Phi( (1-\tfrac\eps{\delta+\eps})\fbold)$ and $\utilde_\eps:=(1-\tfrac\eps{\delta+\eps})u$, and argue as before.
  \end{proof}                                                                                                                                                                                                                                                                                                                
  \begin{prop}\label{prop:superadditivity}
    For sufficiently small $\delta>0$,  
    \begin{equation}\label{eq:superadd_two}
      I(\alpha)+I(\sqrt{\delta^2-\alpha^2})<I(\delta) \quad \forall\  \alpha\in(0, \delta).
    \end{equation}
  \end{prop}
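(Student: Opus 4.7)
The plan is to insert the sharp expansion of Proposition~\ref{prop:SecondOrder} into the difference $I(\delta)-I(\alpha)-I(\sqrt{\delta^2-\alpha^2})$, exploit a purely algebraic identity to extract a positive main term, and then treat the boundary regime in which $\alpha$ is very small separately using the increment bound of Lemma~\ref{lem:sub_differentiable}.

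Since the statement is symmetric under $\alpha\leftrightarrow\sqrt{\delta^2-\alpha^2}$, I may assume $\alpha\in(0,\delta/\sqrt{2}]$ and set $\beta:=\sqrt{\delta^2-\alpha^2}\ge\delta/\sqrt{2}$. Applying the expansion $I(\eta)=\Scal_0\eta^4+\sigma\Scal_1\eta^6+O(\eta^8)$ with $\eta\in\{\alpha,\beta,\delta\}$, the constraint $\alpha^2+\beta^2=\delta^2$ together with the elementary identities $(a+b)^2-a^2-b^2=2ab$ and $(a+b)^3-a^3-b^3=3ab(a+b)$, applied with $a=\alpha^2$, $b=\beta^2$, collapse the fourth- and sixth-order contributions into a single multiplicative factor $\alpha^2\beta^2$, yielding
\begin{equation*}
I(\delta)-I(\alpha)-I(\beta)=\alpha^2\beta^2\bigl(2\Scal_0+3\sigma\Scal_1\delta^2\bigr)+O(\delta^8);
\end{equation*}
the bound $\alpha^8+\beta^8\le(\alpha^2+\beta^2)^4=\delta^8$ keeps the remainder of size $O(\delta^8)$. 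For $\delta$ small the bracket exceeds $\Scal_0>0$, so the task is reduced to checking that $\Scal_0\alpha^2\beta^2$ dominates the $O(\delta^8)$ remainder.

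If $\alpha\ge\delta^2$, then $\alpha^2\beta^2\ge\delta^4\cdot\tfrac12\delta^2=\tfrac12\delta^6$, which swamps $O(\delta^8)$ for small $\delta$ and the difference is manifestly positive. The delicate range is $\alpha<\delta^2$, where the ``bonus'' $\alpha^2\beta^2$ is too small to beat the remainder. Here I would argue directly at the level of increments: $\beta$ is so close to $\delta$ that $\delta-\beta=\alpha^2/(\delta+\beta)<\beta/2$ for small $\delta$, so Lemma~\ref{lem:sub_differentiable} applied at base $\beta$ with increment $\delta-\beta$ yields $I(\delta)-I(\beta)\ge C_1(\delta-\beta)\beta^3\gtrsim\alpha^2\delta^2$; the sign is correct because $I$ is non-decreasing by the very definition of the supremum in~\eqref{eq:I_functional}, and strictly monotone by Lemma~\ref{lem:sub_differentiable} itself. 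On the other hand, the asymptotic expansion gives the crude bound $I(\alpha)\le 2\Scal_0\alpha^4<2\Scal_0\alpha^2\delta^4$, which is negligible compared to $\alpha^2\delta^2$ once $\delta$ is small. Combining these gives $I(\delta)-I(\alpha)-I(\beta)>0$ in this regime as well.

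The main obstacle is exactly this boundary regime: the superadditivity bonus $2\Scal_0\alpha^2\beta^2$ produced by the Taylor expansion degenerates as $\alpha\to 0$ and becomes comparable to (or smaller than) the $O(\delta^8)$ remainder, which is uniform in $\alpha$. Replacing that remainder by the sharper, $\alpha$-sensitive linear-in-increment estimate of Lemma~\ref{lem:sub_differentiable} is what resolves the difficulty: it trades a quadratic-in-$\alpha^2$ gain for a linear-in-$\alpha^2$ one, which is still comfortably larger than the $\alpha^4$ cost of building the almost-linear solution with tiny data $\alpha$. The remaining verifications (the hypothesis $|\delta-\beta|<\beta/2$ of the lemma, and choosing $\delta$ small enough that both $2\Scal_0+3\sigma\Scal_1\delta^2\ge\Scal_0$ and $2\Scal_0\delta^2<C_1/16$ hold) are routine bookkeeping.
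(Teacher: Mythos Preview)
Your argument is correct and follows essentially the same two-step strategy that the paper sketches (and attributes to \cite[Proposition~2.7]{DuMeRo11}): exploit the leading-order superadditivity of $\eta\mapsto\Scal_0\eta^4$ in the bulk, and invoke Lemma~\ref{lem:sub_differentiable} near the boundary $\alpha\to 0$ where the Taylor remainder swamps the quadratic gain $\alpha^2\beta^2$. One minor simplification: the paper uses only the cruder expansion $I(\eta)=\Scal_0\eta^4+O(\eta^6)$, which already gives $I(\delta)-I(\alpha)-I(\beta)=2\Scal_0\alpha^2\beta^2+O(\delta^6)$ and the same threshold $\alpha\gtrsim\delta^2$; your appeal to the full $O(\eta^8)$ formula and the $\Scal_1$ term is not needed here.
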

  \begin{proof}
   This follows from the fact that $I$ is a super-additive function of $\delta$ to main order, because $I(\delta)=\Scal_0\delta^4+O(\delta^6)$, together with the estimates of  Lemma~\ref{lem:sub_differentiable}, which rule out excessive fluctuations; see~\cite[Proposition~2.7]{DuMeRo11}.
  \end{proof}                                                                                                                                                                                                                                                                                                                
  
  \section{Conditional uniqueness of maximizers}


If $u=\Phi(\fbold)$ is a maximizer to $I(\delta)$, and
\begin{equation}\label{eq:PoincUniq}
	\begin{array}{cc}
		\Lambda(t, x)= L^{\beta}\big(\lambda (t-t_{0}), \lambda (x-x_{0})\big),& \lambda>0, \abs\beta < 1, t_0\in \R, x_0\in \R^3, 
	\end{array}
\end{equation}  
then $\lambda (u\circ \Lambda)$ is again a maximizer to $I(\delta)$; this is an immediate consequence of Theorem~\ref{thm:NonlinearPoincareAction}. In this section we give a partial result about the problem of uniqueness of maximizers, up to this transformation. The main tool is the forthcoming Lemma~\ref{lem:transv_nondegenerate}, which is a local version of the sharpened Strichartz estimate of Lemma~\ref{lem:SharpenedStrichartz}.

We begin by showing that each maximizer of $I(\delta)$ has a unique metric projection on the manifold $\Mrom$ of linear maximizers. 
We refer to Appendix~\ref{app:geometry} for the definition of the tangent space $T_{\fboldstar} \Mrom$. 
\begin{lem}\label{lem:projecting_nonlinear_maximizers}
	Let $u=\Phi(\fbold)$ be such that $\norm{u}_{L^4(\R^{1+3})}^4=I(\delta)$. If $\delta>0$ is sufficiently small, then there exists a unique $\fboldstar\in\Mrom\setminus\{\obold\}$ such that 
	\begin{equation}\label{eq:fboldstar_projects}
		\norm{\fbold-\fboldstar}_\PhaseSpace=\dist(\fbold, \Mrom). 
	\end{equation}
	Moreover, $\fbold-\fboldstar\,\bot\, T_{\fboldstar }\Mrom$, where $\bot$ denotes orthogonality with respect to the~$\PhaseSpace$ scalar product.
\end{lem}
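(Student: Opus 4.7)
The plan is to combine the quantitative distance bound from Proposition~\ref{prop:SecondOrder} with the smooth manifold structure of $\Mrom$ supplied by Proposition~\ref{prop:StriMaxCharacterize} and Appendix~\ref{app:geometry}. Since $u = \Phi(\fbold)$ maximizes $I(\delta)$, the closeness hypothesis~\eqref{eq:almost_maximizer_stronger} is satisfied trivially, so Proposition~\ref{prop:SecondOrder} yields $\dist(\fbold, \Mrom) = O(\delta^3)$. Existence of a minimizer $\fboldstar \in \Mrom$ of $\gbold \mapsto \norm{\fbold - \gbold}_\PhaseSpace$ is provided by Lemma~\ref{lem:metric_proj}, and combining the orthogonal decomposition there with the estimate $\norm{\fbold}_\PhaseSpace = \delta + O(\delta^3)$ from Lemma~\ref{lem:FirstOrder} forces $\norm{\fboldstar}_\PhaseSpace = \delta + O(\delta^3)$. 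In particular $\norm{\fbold - \obold}_\PhaseSpace = \norm{\fbold}_\PhaseSpace$ is of order $\delta$, which is much larger than $\dist(\fbold, \Mrom) = O(\delta^3)$, so $\obold$ is not a projection and every projection $\fboldstar$ satisfies $\fboldstar \ne \obold$ once $\delta$ is small.

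For uniqueness, suppose $\fboldstar_1, \fboldstar_2 \in \Mrom \setminus \{\obold\}$ both realize the distance. Then $\norm{\fboldstar_1 - \fboldstar_2}_\PhaseSpace \le 2 \dist(\fbold, \Mrom) = O(\delta^3)$, while each $\norm{\fboldstar_i}_\PhaseSpace$ is of order $\delta$. I would exploit the homogeneity of $\Mrom$ under positive scaling to reduce to a fixed unit-norm cross-section, so that the normalized points lie at mutual distance $O(\delta^2)$ on the smooth finite-dimensional set $\Mrom \cap \{\norm{\cdot}_\PhaseSpace = 1\}$. To handle the non-compactness arising from the Lorentzian, translation, and dilation groups appearing in Proposition~\ref{prop:StriMaxCharacterize}, I would first apply a symmetry $\Lambda$ as in Theorem~\ref{thm:NonlinearPoincareAction} so as to normalize $\fboldstar_1$ to the compact one-parameter family $\{\vboldstartheta|_{t=0} : \theta \in \SSS^1\}$; a tubular neighborhood argument for this compact family, to be developed in Appendix~\ref{app:geometry}, then forces $\fboldstar_2 = \fboldstar_1$ for $\delta$ small.

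The orthogonality $\fbold - \fboldstar \,\bot\, T_{\fboldstar}\Mrom$ follows from a standard first-variation argument: given any smooth curve $\gamma \colon (-\eps_0, \eps_0) \to \Mrom$ with $\gamma(0) = \fboldstar$ and $\gamma'(0) = \gbold \in T_{\fboldstar}\Mrom$, minimality of $\fboldstar$ yields
\begin{equation}
  0 = \left.\frac{d}{d\eps}\right|_{\eps = 0} \norm{\fbold - \gamma(\eps)}_\PhaseSpace^2 = -2 \Braket{\fbold - \fboldstar | \gbold}_\PhaseSpace.
\end{equation}
The main obstacle is the uniqueness step, because $\Mrom$ is non-compact and is only smooth away from $\obold$, so one cannot invoke an off-the-shelf tubular neighborhood theorem. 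The crucial point, to be formalized in Appendix~\ref{app:geometry}, is that the scaling homogeneity of $\Mrom$ makes the injectivity radius at $\fboldstar$ scale linearly in $\norm{\fboldstar}_\PhaseSpace \sim \delta$; since $\dist(\fbold, \Mrom) = O(\delta^3)$, this is a full power of $\delta$ smaller than the injectivity radius, which is precisely what makes uniqueness work.
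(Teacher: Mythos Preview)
Your proposal is correct and follows essentially the same route as the paper: obtain $\dist(\fbold,\Mrom)=O(\delta^3)$ from Proposition~\ref{prop:SecondOrder}, note that $\norm{\fbold}_\PhaseSpace\sim\delta$, and then defer the uniqueness of the projection to Appendix~\ref{app:geometry}, where the key quantitative fact is precisely that uniqueness holds once $\dist(\fbold,\Mrom)<\rho\,\norm{\fbold}_\PhaseSpace$ for a fixed $\rho\in(0,1)$ (Proposition~\ref{prop:Metric_Projection}). The only cosmetic difference is that the paper invokes Lemma~\ref{lem:ConcComp} to get $\norm{\fbold}_\PhaseSpace=\delta$ exactly rather than your $\delta+O(\delta^3)$ from Lemma~\ref{lem:FirstOrder}, and the uniqueness proof in the appendix is phrased as an implicit-function/fixed-point argument rather than a tubular-neighborhood argument, but the content is the same.
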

\begin{proof} This is proved in Appendix~\ref{app:geometry}, the main issue being uniqueness.  Lemma~\ref{lem:ConcComp} ensures that $\norm{\fbold}_\PhaseSpace=\delta$, while by Proposition~\ref{prop:SecondOrder}, we have $\dist(\fbold, \Mrom)=O(\delta^3)$. Thus, if $\delta$ is sufficiently small, then the forthcoming Proposition~\ref{prop:Metric_Projection} can be applied.
\end{proof}
The elements $\fboldstar$ of $\Mrom\setminus\{\obold\}$ have the unique representation 
\begin{equation}\label{eq:fboldstar_parameterized}
	\fboldstar=\left.\delta \lambda \vbold_\theta\circ\Lambda\right|_{t=0}, 
\end{equation}
where $\vbold_\theta=(v_\theta, \partial_t v_\theta)$ are particular solutions to the linear wave equation, as defined in \eqref{eq:ustartheta_fboldtheta} in the fifth section; see Appendix~\ref{app:geometry}. We let  $\theta(\fboldstar)$ denote the unique $\theta\in \SSS^1$. We recall that this parameter $\theta$ does not correspond to any symmetry of~\eqref{eq:cubic_NLW}; see Remark~\ref{rem:Ph_no_symmetry}.

We can now state the result.
  \begin{thm}\label{thm:Uniqueness}
  	Suppose that  $u_{\fbold}=\Phi(\fbold)$ and $u_{\gbold}=\Phi(\gbold)$ satisfy 
\begin{equation}\label{eq:two_maximizers}
		\begin{array}{cc}
		\norm{\fbold}_{\Hcaldot^{1/2}}=\norm{\gbold}_{\Hcaldot^{1/2}}=\delta, &\Ds \text{and}\quad I(\delta)=\norm{u_{\fbold}}_{L^4}^4  = \norm{u_{\gbold}}_{L^4}^4,
		\end{array}
	\end{equation} 
	with $\delta$ sufficiently small. Suppose moreover that the unique projections $\fboldstar$ and $\gbold_{\star}$ satisfy 
	\begin{equation}\label{eq:theta_equality}
		\theta(\fboldstar)=\theta(\gbold_{\star}).
	\end{equation}
	Then there is a transformation $\Lambda$ of the form~\eqref{eq:PoincUniq} such that $u_{\gbold}=\lambda(u_{\fbold}\circ\Lambda)$. 
\end{thm}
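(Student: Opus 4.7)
The plan is to combine the orbit structure of $\Mrom$ under the Poincaré--dilation group (Theorem~\ref{thm:NonlinearPoincareAction}) with a Lyapunov--Schmidt type reduction around $\Mrom$, using the local non-degeneracy provided by Lemma~\ref{lem:transv_nondegenerate} to parameterize any maximizer by its unique projection $\fboldstar\in\Mrom$.

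First I would set up the reduction. For any $\fbold$ with $\|\fbold\|_{\Hcaldot^{1/2}} = \delta$ close to $\Mrom$, I write $\fbold = \fboldstar + \fboldbot$ with $\fboldbot\perp T_{\fboldstar}\Mrom$, as in Lemma~\ref{lem:projecting_nonlinear_maximizers}. Combining the Picard expansion~\eqref{eq:ExpSecOrder} with a coercivity bound of the form $\Scal_0\|\fbold\|^4_{\Hcaldot^{1/2}} - \|S\fbold\|^4_{L^4} \gtrsim \|\fboldstar\|^2_{\Hcaldot^{1/2}}\|\fboldbot\|^2_{\Hcaldot^{1/2}}$ supplied by Lemma~\ref{lem:transv_nondegenerate}, the functional $\|\Phi(\fbold)\|^4_{L^4}$ is, at leading order, a strictly negative-definite quadratic form in $\fboldbot$ perturbed by nonlinear terms of smaller order. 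The implicit function theorem then produces, for each admissible base point $\fboldstar\in\Mrom$ with $\theta(\fboldstar)$ equal to the optimal value and $\|\fboldstar\|_{\Hcaldot^{1/2}}$ close to $\delta$, a unique critical transverse component $\fboldbot = \fboldbot(\fboldstar)$ satisfying $\|\fboldbot(\fboldstar)\|_{\Hcaldot^{1/2}} = O(\delta^3)$. Since $\Phi$, the metric projection on $\Mrom$, and the sphere constraint are all equivariant under Theorem~\ref{thm:NonlinearPoincareAction}, the solution map $\fboldstar \mapsto \fboldbot(\fboldstar)$ commutes with the Poincaré--dilation action.

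Next I would exploit the orbit structure of $\Mrom$. Starting from the parameterization $\fboldstar = c\,\vboldstartheta\circ\Lambda|_{t=0}$ recalled in~\eqref{eq:fboldstar_parameterized} and the action rule $(c,\Lambda)\mapsto(\lambda_0 c,\Lambda\circ\Lambda_0)$, the orbits of the symmetry group on the $\theta$-slice of $\Mrom$ are labelled by the single invariant $c/\lambda(\Lambda) = \|\fboldstar\|_{\Hcaldot^{1/2}}/|\SSS^3|^{1/2}$. Hence, under the hypothesis $\theta(\fboldstar) = \theta(\gbold_{\star})$, the additional equality $\|\fboldstar\|_{\Hcaldot^{1/2}} = \|\gbold_{\star}\|_{\Hcaldot^{1/2}}$ would provide a transformation $\Lambda_1$ of the form~\eqref{eq:PoincUniq} with $(\fboldstar)_{\Lambda_1} = \gbold_{\star}$. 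Replacing $\fbold$ by $\fbold_{\Lambda_1}$, the projection of the transformed data coincides with $\gbold_{\star}$, and the equivariant uniqueness of $\fboldbot(\cdot)$ from the previous paragraph forces $\fbold_{\Lambda_1} = \gbold$; the identity $u_\gbold = \lambda_1(u_\fbold\circ\Lambda_1)$ then follows from Theorem~\ref{thm:NonlinearPoincareAction}.

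It remains to justify the norm equality $\|\fboldstar\|_{\Hcaldot^{1/2}} = \|\gbold_{\star}\|_{\Hcaldot^{1/2}}$. Because the reduced functional $\tilde I(\fboldstar) := \|\Phi(\fboldstar + \fboldbot(\fboldstar))\|^4_{L^4}$ is symmetry-invariant by the equivariance just established, it depends on $\fboldstar$ only through $\theta(\fboldstar)$ and $\|\fboldstar\|_{\Hcaldot^{1/2}}$; the constraint $\|\fboldstar + \fboldbot(\fboldstar)\|_{\Hcaldot^{1/2}} = \delta$ with the orthogonality $\fboldstar\perp\fboldbot$ determines $\|\fboldstar\|_{\Hcaldot^{1/2}}$ as a smooth function of these two parameters. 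Since both $\fbold$ and $\gbold$ attain the same maximal value $I(\delta)$ and share the same $\theta$, the refined expansion of Proposition~\ref{prop:SecondOrder} pins them down to orbits with the same $\|\fboldstar\|_{\Hcaldot^{1/2}}$, closing the loop. The hard part of this plan is the Lyapunov--Schmidt step: verifying that the reduction $\fboldstar \mapsto \fboldbot(\fboldstar)$ is globally well-defined and smooth on a tubular neighbourhood of $\Mrom$ uniformly in the non-compact orbit parameter, and that it respects the symmetries exactly rather than only up to $O(\delta)$; Lemma~\ref{lem:transv_nondegenerate} yields the required uniform lower bound on the linear Hessian along the transverse fibres, but the nonlinear corrections supplied by $\Phi$ must be controlled so as to preserve non-degeneracy throughout the relevant $\fboldstar$.
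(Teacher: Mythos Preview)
Your Lyapunov--Schmidt plan is a genuinely different route from the paper's. The paper does not build an implicit-function map $\fboldstar\mapsto\fboldbot(\fboldstar)$; instead it uses the hypothesis $\theta(\fboldstar)=\theta(\gbold_{\star})$ and the symmetries to write $\fbold=c\delta\mbold+\fboldbot$ and $\gbold=c'\delta\mbold+\gboldbot$ over the \emph{same} unit vector $\mbold\in\Mrom$, sets $\hbold:=\fbold-\gbold$, $\hboldbot:=\fboldbot-\gboldbot$, and expands the vanishing quantity $\|u_{\fbold}\|_{L^4}^4-\|u_{\gbold}\|_{L^4}^4$ directly. A Lagrange-multiplier computation at the critical point $\gbold$ handles the first-order term, producing the key identity~\eqref{eq:UniqKeyStep}, which expresses this difference as minus the Hessian of $\psi$ at $c'\delta\mbold$ in the direction $\hboldbot$, up to an $O(\delta^3\|\hboldbot\|^2)$ error. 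Lemma~\ref{lem:transv_nondegenerate} then gives $\delta^2\|\hboldbot\|^2\le C\delta^3\|\hboldbot\|^2$, forcing $\hboldbot=\obold$. The equality $c=c'$ is obtained \emph{a posteriori}: from $\|\fbold\|=\|\gbold\|=\delta$ one computes $(c-c')^2\le C\delta^2\|\hboldbot\|^2$, so $\hboldbot=\obold$ forces $c=c'$ as well.

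The weak point of your outline is precisely the step where you need $\|\fboldstar\|=\|\gbold_{\star}\|$ in order to place $\fboldstar$ and $\gbold_{\star}$ on the same group orbit \emph{before} invoking the equivariant uniqueness of $\fboldbot(\cdot)$. You appeal to the constraint $\|\fboldstar\|^2+\|\fboldbot(\fboldstar)\|^2=\delta^2$ and to Proposition~\ref{prop:SecondOrder}, but neither pins down $\|\fboldstar\|$: two distinct values of $c=\|\fboldstar\|$ could a priori satisfy the constraint and yield the same maximal value of your reduced functional, and Proposition~\ref{prop:SecondOrder} supplies only $\dist(\fbold,\Mrom)=O(\delta^3)$, not any monotonicity in $c$. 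To close this you would need a separate strict-concavity argument for the reduced one-variable problem in $c$, which your sketch does not provide. The paper's ordering---first kill $\hboldbot$ via the transverse Hessian, then read off $c=c'$ from the norm identity---sidesteps this difficulty, and is also more elementary in that it avoids the infinite-dimensional implicit-function machinery you flag as the hard part.
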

\begin{rem}\label{rem:uniqueness} 
The assumption~\eqref{eq:theta_equality} makes this uniqueness result conditional. We conjecture that such an assumption is not necessary; that there is a single $\theta(\fboldstar)$ for each maximizer $\fbold$ to $I(\delta)$. Indeed, by Proposition~\ref{prop:SecondOrder}, such $\theta(\fboldstar)$ must be close to a maximum of the function $\sigma \Scal(v_\theta)$. By Corollary~\ref{cor:ScalThetaMax}, such maxima differ by an integer multiple of $\pi$, and so correspond to just one linear maximizer $\fbold_\theta$, up to a sign.
\end{rem}

\begin{lem}[Lemma~5.1 of \cite{Negro18}]\label{lem:transv_nondegenerate}
  Let $\psi$ be the  functional defined by
   \begin{equation}\label{eq:psi_funct}
      \psi(\fbold):=\Scal_0\norm{\fbold}_{\Hcaldot^{1/2}}^4-\norm{S\fbold}_{L^4(\R^{1+3})}^4.
   \end{equation}
   Then there exists $C>0$ such that, for all $\mbold \in \Mrom\setminus\{\obold\}$, 
   \begin{equation}\label{eq:Hessian_nondegenerate}
    \begin{array}{cc}\Ds
      \left.\frac{d^2}{d\eps^2} \psi(\mbold + \eps \mboldbot)\right|_{\eps=0}\ge C\norm{\mbold}_{\Hcaldot^{1/2}}^2 \norm{\mboldbot}_{\Hcaldot^{1/2}}^2,& \forall\ \mboldbot \bot T_{\mbold}\Mrom.
    \end{array}
   \end{equation}
  \end{lem}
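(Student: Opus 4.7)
The plan is to compute the Hessian explicitly and then exploit the symmetries and the Penrose transform to reduce the inequality to a spectral-gap problem on the Einstein cylinder. First, a direct expansion of the two terms of $\psi$ gives
\begin{equation*}
  \left.\frac{d^2}{d\eps^2}\psi(\mbold+\eps\mboldbot)\right|_{\eps=0}=\Scal_0\Tonde{8\langle\mbold,\mboldbot\rangle_{\PhaseSpace}^2+4\norm{\mbold}_{\PhaseSpace}^2\norm{\mboldbot}_{\PhaseSpace}^2}-12\iint_{\R^{1+3}}(S\mbold)^2(S\mboldbot)^2\,dt\,dx.
\end{equation*}
Since $\Mrom$ is a cone from the origin, $\mbold$ itself lies in $T_{\mbold}\Mrom$, so transversality forces $\langle\mbold,\mboldbot\rangle_{\PhaseSpace}=0$ and reduces the Hessian to $4\Scal_0\norm{\mbold}_{\PhaseSpace}^2\norm{\mboldbot}_{\PhaseSpace}^2-12\iint(S\mbold)^2(S\mboldbot)^2\,dt\,dx$. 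By Proposition~\ref{prop:StriMaxCharacterize}, every $\mbold\in\Mrom\setminus\{\obold\}$ has the form $c(\vboldstartheta\circ\Lambda)|_{t=0}$. The Poincar\'e-type transformations act unitarily on $\PhaseSpace$ (Lemma~\ref{lem:LinearPoincareAction}), while the phase rotations $\Ph_\theta$ of Remark~\ref{rem:Ph_no_symmetry} are unitary and commute with $S$; both families leave $\psi$ and $\Mrom$ invariant, hence preserve the Hessian together with the orthogonal decomposition along $T_{\mbold}\Mrom$. Both sides of the desired inequality being homogeneous of the same degree in $\mbold$, it suffices to prove the bound at the distinguished ground state, corresponding to $\theta=0$ in the parameterization of Proposition~\ref{prop:StriMaxCharacterize}.

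Next I would apply the Penrose transform. Let $V_{\mboldbot}$ be the Penrose transform of $S\mboldbot$; the conformal change of variables used in the derivation of~\eqref{eq:SustarPenrose} yields
\begin{equation*}
  \iint(S\mbold)^2(S\mboldbot)^2\,dt\,dx=\iint_{\Pcal(\R^{1+3})}\cos^2(T)\,V_{\mboldbot}^2\,dT\,dS.
\end{equation*}
The function $V_{\mboldbot}$ extends to a solution of the conformal Klein--Gordon equation on the Einstein cylinder $\R\times\SSS^3$, whose conserved energy is equivalent to $\norm{\mboldbot}_{\PhaseSpace}^2$. Expanding in the spherical harmonics $Y_{n,\ell,m}$ of $-\Delta_{\SSS^3}$ (eigenvalues $n(n+2)$), the time frequencies are forced to be $\pm(n+1)$. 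A direct differentiation of the parameterization $c(\vboldstartheta\circ\Lambda)|_{t=0}$ in the ten independent directions $(c,\theta,\beta,\lambda,t_0,x_0)$ shows that the Penrose image of $T_{\mbold}\Mrom$ is exactly the ten-dimensional subspace of modes with $n\le 1$: the two $n=0$ modes $\cos T,\sin T$ encode $(c,\theta)$, while the four degree-one harmonics on $\SSS^3$, each paired with $\cos(2T)$ and $\sin(2T)$, encode time translations, the three space translations, dilations and the three boosts. Transversality therefore forces $V_{\mboldbot}$ to be supported on modes with $n\ge 2$.

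The main obstacle is the spectral gap at the end: one must show that there exists an absolute $C>0$ for which
\begin{equation*}
  12\iint_{\Pcal(\R^{1+3})}\cos^2(T)\,V^2\,dT\,dS\le (4\Scal_0-C)\,\norm{V}^2_{\mathrm{energy}}
\end{equation*}
uniformly over real $V$ supported on modes with $n\ge 2$. Writing $\cos^2 T=\tfrac12+\tfrac12\cos 2T$, the first piece is controlled directly by the energy, while $\cos 2T$ only couples modes $n$ and $n\pm 2$ via the matching of time frequencies, so the problem decomposes along each $SO(4)$-isotypic component into a tridiagonal operator whose norm on the tail $n\ge 2$ must be shown to fall strictly below the saturation threshold $\Scal_0/3$. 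A cleaner alternative is to argue by contradiction using concentration-compactness, in the spirit of Appendix~A: a sequence $\mboldbot_n\perp T_{\mbold}\Mrom$ with $\norm{\mboldbot_n}_{\PhaseSpace}=1$ saturating the bilinear form would produce a limit for which equality holds in both Cauchy--Schwarz and the Strichartz inequality; the rigidity of these equality cases then forces the limit into $\R\mbold\subset T_{\mbold}\Mrom$, contradicting the transversality.
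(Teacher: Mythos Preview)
The paper does not prove this lemma; it is quoted verbatim as Lemma~5.1 of~\cite{Negro18}, and only the explicit second-derivative formula~\eqref{eq:psi_second} is recorded here. So there is no ``paper's own proof'' to compare against beyond the citation.

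Your Penrose-transform route is in fact the strategy of~\cite{Negro18}: reduce by the symmetries of $\Mrom$ to the distinguished ground state, pass to the Einstein cylinder, expand in spherical harmonics on $\SSS^3$ (time frequencies $\pm(n+1)$), identify $T_{\mbold}\Mrom$ with the ten low modes $n\le 1$, and then establish a strict spectral gap for the weighted quadratic form on the tail $n\ge 2$. Your outline of that reduction, including the Hessian computation and the dimension count, is correct. What is not yet a proof is the final step: the tridiagonal operator coming from the $\cos 2T$ coupling must actually be bounded in operator norm, uniformly over isotypic components, by something strictly smaller than the threshold. In~\cite{Negro18} this is done by an explicit diagonalisation and a monotonicity argument in $n$; you have described the structure but not supplied the bound.

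Your proposed concentration-compactness alternative, however, has a genuine gap. Fixing $\mbold$ already breaks the full symmetry group, so a maximizing sequence $\mboldbot_n\perp T_{\mbold}\Mrom$ for the quadratic form $\iint (S\mbold)^2(S\mboldbot_n)^2$ need not be precompact: profiles of $\mboldbot_n$ escaping to infinity (in scale, translation, or boost) simply decouple from the fixed weight $(S\mbold)^2$ and contribute zero, so the argument does not manufacture a nontrivial limit. More importantly, saturation of this bilinear form is not equality in Cauchy--Schwarz (which would force $(S\mboldbot)^2\propto(S\mbold)^2$) nor equality in Strichartz; the actual extremal value on $T_{\mbold}\Mrom^\perp$ is $\tfrac13\Scal_0\norm{\mbold}^2$ per unit $\norm{\mboldbot}^2$, and one must show it is not attained there, which brings you straight back to the spectral computation. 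The soft argument does not bypass it.
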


The derivative in~\eqref{eq:Hessian_nondegenerate} can be computed to be
\begin{align}
		\label{eq:psi_second}
		\!\!\!\!\!\!\!\frac12\left.\frac{d^2}{d\eps^2}\psi(\mbold+\eps\mboldbot)\right|_{\eps=0} \!\!\!\!\!\!\!&= 2\Scal_0\norm{\mbold}_{\Hcaldot^{1/2}}^2\norm{\mboldbot}_{\Hcaldot^{1/2}}^2 -6\iint_{\R^{1+3}}\!(S\mbold)^2(S\mboldbot)^2;
\end{align}
see the proof of Lemma~5.1 of \cite{Negro18} for more details.

  \begin{proof}[Proof of Theorem~\ref{thm:Uniqueness}]
   By the unique representation \eqref{eq:fboldstar_parameterized}, our assumption~\eqref{eq:theta_equality}, and Lemma~\ref{lem:projecting_nonlinear_maximizers}, up to changing $u_{\fbold}$ with $\lambda(u_{\fbold} \circ \Lambda)$ and $u_{\gbold}$ with $\lambda'(u_{\gbold}\circ\Lambda')$, where $\Lambda$ and $\Lambda'$ are transformations of the form~\eqref{eq:PoincUniq}, we can decompose 
   \begin{equation}\label{eq:fdelta_gdelta}
   	\begin{array}{cccc}
		\fbold=c\delta\mbold + \fboldbot, & \gbold=c'\delta\mbold+\gboldbot, &\text{with }\fboldbot\bot T_\mbold \Mrom \text{ and } \gboldbot\bot T_{\mbold}\Mrom,
	\end{array}
\end{equation}
where $\mbold=\abs{\SSS^3}^{-1/2}\fbold_{\theta(\fboldstar)}$, so that $\norm{\mbold}_{\Hcaldot^{1/2}}=1$.  We denote 
\begin{equation}\label{eq:h_hboldbot}
		\hbold:=\fbold - \gbold, \quad \text{and}\quad \hboldbot:=\fboldbot-\gboldbot.
\end{equation}
The proof will be complete once we show that $\hbold=\obold$. 

We now record the necessary estimates. First, we recall from Proposition~\ref{prop:SecondOrder} that 
\begin{equation}\label{eq:fboldbot_gboldbot_distance}
	\begin{array}{cc}
		\norm{\hboldbot}_{\Hcaldot^{1/2}}\le \dist(\fbold, \Mrom)+\dist(\gbold, \Mrom)=O(\delta^3).
	\end{array}
\end{equation}
Now using the orthogonality, we can expand the identity $\norm{\fbold}^2_{\Hcaldot^{1/2}}=\norm{\gbold}^2_{\Hcaldot^{1/2}}$, to obtain 
\begin{equation}\label{eq:c_c'_controlled_bot}
	\delta^2\abs{c^2-c'^2}=\abs{\norm{\gboldbot}_{\Hcaldot^{1/2}}^2-\norm{\fboldbot}^2_{\Hcaldot^{1/2}}}\le C\delta^3\norm{\hboldbot}_{\Hcaldot^{1/2}},
\end{equation}
so that
\begin{equation}\label{eq:c_minus_c'}
	(c-c')^2=\Tonde{\frac{c^2-c'^2}{c+c'}}^2\le C\delta^2\norm{\hboldbot}_{\Hcaldot^{1/2}}^2.
\end{equation} 
In particular, 
\begin{equation}\label{eq:hdelta_control_hboldbot}
	\norm{\hbold}_{\Hcaldot^{1/2}}^2 = (c-c')^2\delta^2 + \norm{\hboldbot}_{\Hcaldot^{1/2}}^2 =\norm{\hboldbot}_{\Hcaldot^{1/2}}^2+ O(\delta^4\norm{\hboldbot}_{\Hcaldot^{1/2}}^2).
\end{equation}
We now define $w:=u_{\fbold}-u_{\gbold}$; that is, $w=\Phi(\fbold)-\Phi(\gbold)$. By the definition \eqref{eq:u_delta_recall} of $\Phi$, we have that 
\begin{equation}\label{eq:w_delta_fixpoint_eq}
	w = S\hboldbot + S\big( (c-c')\delta\mbold\big) +\sigma \antibox\Tonde{u_{\fbold}^3-u_{\gbold}^3}, 
\end{equation}
and the Strichartz estimates~\eqref{eq:ConfStrichartz} give $$\norm{\antibox\Tonde{u_{\fbold}^3-u_{\gbold}^3}}_{L^4}\le C\delta^2\norm{\hbold}_{\Hcaldot^{1/2}}.$$  Thus by~\eqref{eq:c_minus_c'} and~\eqref{eq:hdelta_control_hboldbot} we have 
\begin{equation}\label{eq:w_delta_asymp}
	w= S\hboldbot + O(\delta^2 \norm{\hboldbot}_{\Hcaldot^{1/2}});
\end{equation}
the big-O symbol referring to the $\Lfour$ norm. Analogously, we see that
\begin{equation}\label{eq:v_delta_asymp}
	u_{\gbold} = S(c'\delta \mbold) + O(\delta^3).
\end{equation}

With these estimates in hand, we may now proceed with the proof. The key step is given by the formula
\begin{equation}\label{eq:UniqKeyStep}
	\norm{u_{\fbold}}_{L^4}^4 - \norm{u_{\gbold}}_{L^4}^4 = -\frac{1}{2}\left.\frac{d^2}{d\eps^2} \psi(c'\delta\mbold + \eps\hbold_\bot) \right|_{\eps=0}+O(\delta^3\norm{\hboldbot}_{{\Hcaldot^{1/2}}}^2),
\end{equation}
which we will prove later. Note that the left-hand side vanishes by assumption. So, once~\eqref{eq:UniqKeyStep} is proven, Lemma~\ref{lem:transv_nondegenerate} will imply that 
\begin{equation}\label{eq:hboldbot_smaller_hboldbot}
	\delta^2\norm{\hboldbot}_{\Hcaldot^{1/2}}^2\le C\delta^3 \norm{\hboldbot}^2_{\Hcaldot^{1/2}},
\end{equation}
for an absolute constant $C>0$, which is only possible if $\norm{\hboldbot}_{\Hcaldot^{1/2}}= 0$, provided that $\delta<C^{-1}$. By~\eqref{eq:hdelta_control_hboldbot}, this would imply that $\hbold=\obold$, concluding the proof.

In order to prove~\eqref{eq:UniqKeyStep}, we recall that $u_{\fbold}=u_{\gbold}+w$ and we expand
\begin{equation}\label{eq:expansion_uniqueness}
\begin{split}
	&\iint_{\R^{1+3}}(u_{\gbold}+w)^4-\iint_{\R^{1+3}}u_{\gbold}^4 =4\iint_{\R^{1+3}}u_{\gbold}^3w + 6\iint_{\R^{1+3}} u_{\gbold}^2w^2+O(\delta\norm{\hboldbot}_{\Hcaldot^{1/2}}^3) \\ 
	&=4\iint_{\R^{1+3}}u_{\gbold}^3 w +6\iint_{\R^{1+3}}(S (c'\delta\mbold))^2(S\hboldbot)^2 +O(\delta^3\norm{\hboldbot}_{\Hcaldot^{1/2}}^2 +\delta\norm{\hboldbot}_{\Hcaldot^{1/2}}^3),
\end{split}
\end{equation}
where we used~\eqref{eq:w_delta_asymp} and~\eqref{eq:v_delta_asymp}. By~\eqref{eq:fboldbot_gboldbot_distance}, we know that $$O(\delta^3\norm{\hboldbot}_{\Hcaldot^{1/2}}^2 +\delta\norm{\hboldbot}_{\Hcaldot^{1/2}}^3)=O(\delta^3\norm{\hboldbot}_{\Hcaldot^{1/2}}^2).$$ Thus, using~\eqref{eq:psi_second}, to conclude the proof of~\eqref{eq:UniqKeyStep} it remains to show that 
\begin{equation}\label{eq:first_order_exp_uniqueness}
	4\iint_{\R^{1+3}} u_{\gbold}^3w = -2\Scal_0c'^2\delta^2\norm{\hboldbot}_{\Hcaldot^{1/2}}^2 +O(\delta^3\norm{\hboldbot}_{\Hcaldot^{1/2}}^2),
\end{equation}
 for which we will use the Lagrange multiplier theorem. 

For $\kbold\in\PhaseSpace(\R^3)$, let 
\begin{equation}\label{eq:BigW_BigG}
	\begin{array}{cc}
		W(\kbold):=\Phi(\gbold+\kbold)-\Phi(\gbold),& G(\kbold):=\norm{\gbold+ \kbold}^2_{{\Hcaldot^{1/2}}},
	\end{array}
\end{equation}
so that $w=W(\hbold)$, $0=W(\obold)$ and $\delta^2= G(\obold)$. Since $u_{\gbold}=\Phi(\gbold)$ is a maximizer for $I(\delta)$, we have that 
\begin{equation}\label{eq:maxim_probl_uniq}
	\iint_{\R^{1+3}}u_{\gbold}^4=\max\Set{ \iint_{\R^{1+3}}(u_{\gbold} + W(\kbold))^4 | G(\kbold)=\delta^2};
\end{equation}
that is, $\kbold=\obold$ is a solution to the constrained optimization problem on the right-hand side of~\eqref{eq:maxim_probl_uniq}. In particular, there exists a Lagrange multiplier $\mu\in\mathbb R$ such that 
\begin{equation}\label{eq:LagrMult}
	\begin{array}{cc}\Ds 
		\mu G'(\obold)\kbold= 4\iint_{\R^{1+3}}u_{\gbold}^3 W'(\obold)\kbold, & \forall \kbold\in\PhaseSpace(\R^3), 
	\end{array}
\end{equation}
where the notation $F'(\obold)\kbold$ denotes the directional derivative $\left.\tfrac{d}{d\eps}F(\eps\kbold)\right|_{\eps=0}$.
We need to compute $\mu$. First we note that, by the definition of $G$,
\begin{equation}\label{eq:diff_G}
	\mu G'(\obold)\kbold= 2\mu \Braket{\gbold|\kbold}_{\Hcaldot^{1/2}}.
\end{equation}
Now, by the definition~\eqref{eq:BigW_BigG} of $W$,
\begin{equation}\label{eq:Wfixpt}
	W(\kbold)=S\kbold + \sigma\antibox\Tonde{\Phi(\gbold+\kbold)^3-\Phi(\gbold)^3},
\end{equation}
and the right-hand side is differentiable; see Remark~\ref{rem:Differentiability}. The directional derivative equals
\begin{equation}\label{eq:Wprime_zero}
	W'(\obold)\kbold= S\kbold+3\antibox (\Phi(\gbold)^2\Phi'(\gbold)\kbold)=S\kbold + O(\delta^2\norm{\kbold}_{\Hcaldot^{1/2}}).
\end{equation}
We insert this, the expansion~\eqref{eq:v_delta_asymp} of $u_{\gbold}$ and the formula $\gbold=c'\delta\mbold+\gboldbot$, into~\eqref{eq:LagrMult} to obtain 
\begin{equation}\label{eq:mu_delta_first}
	2\mu\Braket{c'\delta\mbold|\kbold}_{\Hcaldot^{1/2}}+2\mu\Braket{\gboldbot|\kbold}_{\Hcaldot^{1/2}}=4\iint_{\R^{1+3}} (S(c'\delta \mbold))^3 S\kbold + O(\delta^5\norm{\kbold}_{\Hcaldot^{1/2}}). 
\end{equation}
We evaluate this equation at $\kbold=\mbold$, using that $\Braket{\gboldbot|\mbold}_{\Hcaldot^{1/2}}=0$ and  that  $\norm{S\mbold}_{L^4}^4=\Scal_0$. The result is
\begin{equation}\label{eq:mu_delta_computed}
	\mu=2\Scal_0c'^2\delta^2+O(\delta^5).
\end{equation}

We are now ready to conclude the proof of~\eqref{eq:first_order_exp_uniqueness}.  We notice that $$\norm{\gbold}_{\Hcaldot^{1/2}}^2=\norm{\gbold+\hbold}_{\Hcaldot^{1/2}}^2=\delta^2,$$ so $2\Braket{\gbold|\hbold}_{\Hcaldot^{1/2}}=-\norm{\hbold}_{\Hcaldot^{1/2}}^2$. Using this,
\begin{equation}\label{eq:from_vdelta_to_braket}
	\begin{split}
		4\iint_{\R^{1+3}} u_{\gbold}^3w &= 4\iint_{\R^{1+3}}u_{\gbold}^3W'(\obold)\hbold + O(\delta^3\norm{\hbold}_{\Hcaldot^{1/2}}^2), \\ 
		&=2\mu \Braket{\gbold| \hbold}_{\Hcaldot^{1/2}} +O(\delta^3\norm{\hbold}_{\Hcaldot^{1/2}}^2) \\
		&=-2\Scal_0c'^2\delta^2\norm{\hbold}_{\Hcaldot^{1/2}}^2 + O(\delta^3\norm{\hbold}_{\Hcaldot^{1/2}}^2),
	\end{split}
\end{equation}
where we used that $w=W(\hbold)=W'(\obold)\hbold + O(\norm{\hbold}_{\Hcaldot^{1/2}}^2)$. Since $\norm{\hbold}_{\Hcaldot^{1/2}}$ equals $\norm{\hboldbot}_{\Hcaldot^{1/2}}$ to main order (see~\eqref{eq:hdelta_control_hboldbot}), the proof of~\eqref{eq:first_order_exp_uniqueness} is complete.
  \end{proof}

\appendix
\section{Nonlinear profile decomposition}\label{app:ConcComp}
In this section, we adapt the linear profile decomposition of Ramos (see~\cite{Ramos12}) to sequences of solutions of~\eqref{eq:cubic_NLW}. This is classical, and similar to what is done in~\cite{Ram18}, with the difference that we assign the initial data at $t=-\infty$, in the sense of Proposition~\ref{prop:small_data_theory}.

We consider sequences of transformations of the form 
\begin{equation}\label{eq:Lambda_n}
	\Lambda_n(t, x)=L^{\beta_n} \big( \lambda_n(t-t_n), \lambda_n(x-x_n)\big), 
\end{equation}
where $\lambda_n\in(0, \infty), t_n\in \R, x_n \in \R^3$ and $\beta_n\in\R^3$ with $|\beta_n|<1$. Here we use the notation $a\sim b$, to mean that an absolute constant $C>0$ exists such that $C^{-1} a \le b \le C a$. The following definition is taken from~\cite{Ramos12}.
\begin{defn}\label{def:orthogonality}
	Consider sequences $(\Lambda^1_n)_{n\in\N}, (\Lambda^2_n)_{n\in\N}$ as above and let 
	\begin{equation}\label{eq:rapidity}
		\begin{array}{cc}
		\frac{(\ell^j_n)^2-1}{(\ell^j_n)^2+1}= |\beta^j_n|, & \ell^j_n\in[1, \infty).
		\end{array}
	\end{equation}
	The sequences $\Lambda^1_n$ and $\Lambda^2_n$ are \emph{orthogonal} if at least one of the following properties is satisfied: 
	\begin{enumerate}
	\item Lorentz property:
	\begin{equation}\label{eq:Lorentz_Div}
		\lim_{n\to\infty} \frac{\ell^1_n}{\ell^2_n}+\frac{\ell^2_n}{\ell^1_n}=\infty.
	\end{equation}
	\item Rescaling property:
	\begin{equation}\label{eq:Scaling_Div}
		\lim_{n\to\infty} \frac{\lambda^{(1)}_n}{\lambda^{(2)}_n}+\frac{\lambda^{(2)}_n}{\lambda^{(1)}_n}=\infty.
	\end{equation}
	\item Angular property: it holds that $\lambda^{(1)}_n\sim \lambda^{(2)}_n$, $\ell^1_n\sim \ell^2_n$ and 
	\begin{equation}\label{eq:Angular_Div}
		\lim_{n\to\infty} \ell^1_n \abs{ \frac{\beta^1_n}{\abs{\beta^1_n}} - \frac{\beta^2_n}{\abs{\beta^2_n}}}=\infty.
	\end{equation}
	\item Spacetime translation property: it holds that $\lambda^{(1)}_n=\lambda^{(2)}_n, \beta^1_n=\beta^2_n$ and 
	\begin{equation}\label{eq:Transl_Div}
		\lim_{n\to\infty} \abs{L^{\beta^1_n}\big(\lambda^{(1)}_n(t^1_n-t^2_n), \lambda^{(1)}_n(x^1_n-x^2_n)\big)}=\infty.
	\end{equation}
\end{enumerate}
\end{defn}
Definition~\ref{def:orthogonality} is motivated by the following property.
\begin{prop}\label{prop:OrthoLfour} If $w_1, w_2\in \Lfour$ and $\Lambda^1_n, \Lambda^2_n$ are orthogonal sequences of transformations, then for all $\alpha, \beta\in[0, \infty)$ such that $\alpha+\beta=4$,
\begin{equation}\label{eq:ortho_aftermath}
\lim_{n\to \infty} \iint_{\R^{1+3}} \abs{\lambda^{(1)}_n w_1(\Lambda^1_n(t, x))}^\alpha \abs{\lambda^{(1)}_n w_2(\Lambda^2_n(t, x))}^\beta\, dtdx  = 0. 
\end{equation}
\end{prop}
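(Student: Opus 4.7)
The plan is a density argument followed by a case-by-case analysis. Observe first that the map $w\mapsto \lambda w\circ\Lambda$ is an isometry of $\Lfour$, since $L^\beta$ has unit Jacobian and the dilation by $\lambda$ contributes a factor $\lambda^{-4}$ which exactly cancels the $\lambda^4$ from the pointwise rescaling. By H\"older's inequality the integrand in~\eqref{eq:ortho_aftermath} is therefore bounded uniformly in $n$ by $\|w_1\|_{\Lfour}^{\alpha}\|w_2\|_{\Lfour}^{\beta}$, so by density I may assume $w_1,w_2\in C^\infty_c(\R^{1+3})$ supported in a common ball $B_R\subset \R^{1+3}$. The degenerate cases $\alpha=0$ or $\beta=0$ give a constant in $n$ rather than $0$, and I tacitly restrict to $\alpha,\beta>0$.

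The substitution $s=\Lambda^1_n(t,x)$, combined with the relation $\alpha+\beta=4$ to cancel powers of $\lambda^{(1)}_n$, rewrites the integral as
\begin{equation*}
J_n\;=\;\tilde\lambda_n^{\beta}\iint_{B_R}|w_1(s)|^\alpha\,|w_2(\Psi_n(s))|^\beta\,ds,
\end{equation*}
where $\Psi_n:=\Lambda^2_n\circ(\Lambda^1_n)^{-1}$ is again a Poincar\'e-with-dilation transformation, with scale $\tilde\lambda_n=\lambda^{(2)}_n/\lambda^{(1)}_n$, boost $L^{\tilde\beta_n}=L^{\beta^2_n}L^{-\beta^1_n}$, and some translation vector $\tilde s_n\in\R^{1+3}$. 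Performing instead the substitution $s'=\Lambda^2_n(t,x)$ yields the dual formula, with factor $\tilde\lambda_n^{-\alpha}$ and $w_1,w_2,\Psi_n$ swapped with $w_2,w_1,\Psi_n^{-1}$.

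Passing to a subsequence, exactly one of the four orthogonality properties of Definition~\ref{def:orthogonality} holds, and I treat each separately. In the rescaling case $\tilde\lambda_n\to 0$ or $\infty$; bounding $|w_j|\le\|w_j\|_{L^\infty}\mathbf{1}_{B_R}$ and observing that $\{s\in B_R:\Psi_n(s)\in B_R\}$ has measure $\lesssim \tilde\lambda_n^{-4}$ forces $J_n\to 0$ via the formula with the favourable power of $\tilde\lambda_n$. In the spacetime translation case, $\Psi_n$ is a pure translation of amplitude $|\tilde s_n|\to\infty$, so $B_R$ and $\Psi_n^{-1}(B_R)$ are eventually disjoint and $J_n$ vanishes for large $n$. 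In the Lorentz and angular cases, a direct computation of the rapidity of $L^{\beta^2_n}L^{-\beta^1_n}$ in terms of the individual rapidities shows that the composite rapidity $\tilde\ell_n$ diverges; combined with the geometric estimate
\begin{equation*}
\sup_{y\in\R^{1+3}}\bigl|B_R\cap\bigl(y+L^{-\tilde\beta_n}(B_R)\bigr)\bigr|\longrightarrow 0,
\end{equation*}
which reflects the fact that $L^{\tilde\beta_n}(B_R)$ collapses to a thin tube along a lightlike direction as $\tilde\ell_n\to\infty$, this yields $J_n\to 0$. The main obstacle will be verifying the rapidity divergence in the angular case, where the two boosts could a priori partially cancel: here one must invoke the Lorentzian spherical-distance inequality from~\cite{Ramos12} to show that the hypothesis $\ell^1_n\bigl||\beta^1_n|^{-1}\beta^1_n-|\beta^2_n|^{-1}\beta^2_n\bigr|\to\infty$, together with $\ell^1_n\sim \ell^2_n$ and $\lambda^{(1)}_n\sim \lambda^{(2)}_n$, is sufficient to rule out such cancellation and force $\tilde\ell_n\to\infty$.
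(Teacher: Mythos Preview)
The paper states this proposition without proof, as a known consequence of the orthogonality notion taken from Ramos~\cite{Ramos12}; there is no argument in the paper to compare against. Your density-plus-change-of-variables reduction to compactly supported $w_j$, followed by a case analysis on the four alternatives of Definition~\ref{def:orthogonality}, is the standard route and is essentially correct.

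Two small points worth tightening. First, the printed statement carries a typo: the second scaling factor should be $\lambda^{(2)}_n$, not $\lambda^{(1)}_n$, and you have implicitly used the corrected version when deriving the $\tilde\lambda_n^{\beta}$ prefactor (with both factors equal to $\lambda^{(1)}_n$ the integral need not even stay bounded under the rescaling alternative). Second, in the Lorentz and angular cases nothing in Definition~\ref{def:orthogonality} forces $\tilde\lambda_n$ to be bounded, so you should first pass to a subsequence and dispose of the possibilities $\tilde\lambda_n\to 0$ or $\tilde\lambda_n\to\infty$ via your rescaling argument, and only then, on the remaining subsequence with $\tilde\lambda_n\sim 1$, invoke the collapse estimate $\sup_y\lvert B_R\cap(y+L^{-\tilde\beta_n}(B_R))\rvert\to 0$. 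Your claim that the composite rapidity $\tilde\ell_n$ diverges is correct in both cases: in the Lorentz case it follows from the singular-value bound $\tilde\ell_n\ge\max(\ell^1_n/\ell^2_n,\ell^2_n/\ell^1_n)$, and in the angular case the relative-velocity formula $\gamma_{\rm rel}=\gamma_1\gamma_2(1-\vec v_1\cdot\vec v_2)$ gives, under $\ell^1_n\sim\ell^2_n$ and $\ell^1_n\lvert\hat\beta^1_n-\hat\beta^2_n\rvert\to\infty$, that $\tilde\ell_n\to\infty$.
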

We can now recast, using our notation, the aforementioned linear profile decomposition of Ramos.
\begin{thm}\label{thm:Linear_Profiles}
 Let $\fbold_n$ be a bounded sequence in $\PhaseSpace(\R^3)$. Then there exists an at most countable set 
 \begin{equation}\label{eq:LinProfiles}
 	\Set{ (\Fbold^j, (\Lambda^j_n)_{n\in\N}) : j=1, 2, 3, \ldots} ,
\end{equation}
where $\Fbold^j\in\PhaseSpace(\R^3)$ and the sequences $(\Lambda^j_n)$ are pairwise orthogonal in the sense of Definition~\ref{def:orthogonality}, such that, up to passing to a subsequence, 
 \begin{equation}\label{eq:profile_decomposition}
 	S\fbold_n=\sum_{j=1}^J \lambda^{(j)}_n (S\Fbold^j)\circ \Lambda^j_n + S\rbold^J_n,
\end{equation}
where the remainder term $\rbold^J_n$ satisfies the vanishing property
\begin{equation}\label{eq:profile_decomposition_smallness_remainder_term}
	\lim_{J\to\infty} \limsup_{n\to \infty} \norm{S\rbold^J_n}_{\Lfour} =0.
\end{equation}
Moreover, for each $J\ge 1$, we have  the  Pythagorean expansion, as $n\to \infty$,
\begin{equation}\label{eq:Pythagorean_energy}
\Ds\norm{\fbold_n}_\PhaseSpace^2 = \sum_{j=1}^J \norm{ \fbold^j}_\PhaseSpace^2 + \norm{\rbold^J_n}_\PhaseSpace^2 + o(1).
\end{equation}
\end{thm}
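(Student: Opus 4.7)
The plan is to build the decomposition iteratively via an inverse form of the linear Strichartz inequality, extracting one profile at a time and verifying that the extracted symmetry sequences are pairwise orthogonal. The starting point is Ramos' inverse Strichartz inequality~\cite{Ramos12}: if $\fbold\in \PhaseSpace(\R^3)$ satisfies $\norm{S\fbold}_{\Lfour}\ge \eta \norm{\fbold}_{\PhaseSpace}$, then there exist a transformation $\Lambda$ of the form~\eqref{eq:Lambda_n} and a profile $\Fbold \in \PhaseSpace(\R^3)$ with $\norm{\Fbold}_{\PhaseSpace} \gtrsim \eta^{a} \norm{\fbold}_{\PhaseSpace}$ for some fixed $a>0$, realised as the weak $\PhaseSpace$-limit of the pullback of $\fbold$ by the symmetry associated with $\Lambda$. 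The proof of this statement rests on a Whitney decomposition of the light cone combined with a bilinear restriction estimate and is the heart of~\cite{Ramos12}. Applying it to the sequence $\fbold_n$ produces, up to passing to a subsequence, the first pair $(\Fbold^1,(\Lambda^1_n)_{n\in\N})$ together with the first remainder $\rbold^1_n := \fbold_n - \lambda^{(1)}_n \bigl((S\Fbold^1)\circ \Lambda^1_n\bigr)\big|_{t=0}$.

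I would then iterate this extraction. At step $J$, either $\limsup_n \norm{S\rbold^J_n}_{\Lfour}$ is already arbitrarily small, so that the procedure terminates and~\eqref{eq:profile_decomposition_smallness_remainder_term} is automatic, or the inverse Strichartz inequality applied to $\rbold^J_n$ produces a new profile $\Fbold^{J+1}$ with non-negligible $\PhaseSpace$-norm. The crucial ingredient that closes the induction is a Pythagorean expansion: since the symmetry group acts unitarily on $\PhaseSpace$ by Lemma~\ref{lem:LinearPoincareAction}, weak convergence at each extraction step yields
\begin{equation*}
\norm{\rbold^J_n}_{\PhaseSpace}^2 = \norm{\fbold_n}_{\PhaseSpace}^2 - \sum_{j=1}^J \norm{\Fbold^j}_{\PhaseSpace}^2 + o(1),
\end{equation*}
so that $\sum_j \norm{\Fbold^j}_{\PhaseSpace}^2$ is bounded and hence $\norm{\Fbold^j}_{\PhaseSpace} \to 0$ as $j\to\infty$. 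Combined with the quantitative lower bound from the inverse inequality, this forces $\norm{S\rbold^J_n}_{\Lfour} \to 0$ as $J\to \infty$, establishing both~\eqref{eq:profile_decomposition_smallness_remainder_term} and~\eqref{eq:Pythagorean_energy}.

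The main obstacle will be showing that distinct sequences $\Lambda^j_n$ and $\Lambda^k_n$ are orthogonal in the sense of Definition~\ref{def:orthogonality}. If this failed, then along some subsequence the compositions $(\Lambda^k_n)^{-1} \circ \Lambda^j_n$ would stay in a precompact region of the symmetry group and converge to some limit transformation $\Lambda^\infty$. Pulling the $j$-th profile back by $\Lambda^k_n$ and evaluating at $t=0$ would then yield a sequence with a nontrivial weak $\PhaseSpace$-limit related to $\Fbold^j$ via $\Lambda^\infty$, contradicting the defining property of $\Fbold^k$ as the weak limit of the remainder after subtracting the first $k-1$ profiles. Once orthogonality is in place, Proposition~\ref{prop:OrthoLfour} ensures that all mixed terms vanish in $\Lfour$, so that the almost-orthogonal character of the decomposition~\eqref{eq:profile_decomposition} is consistent with~\eqref{eq:profile_decomposition_smallness_remainder_term}.
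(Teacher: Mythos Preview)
The paper does not prove this theorem at all: it is stated in Appendix~\ref{app:ConcComp} as a recasting, in the paper's notation, of the linear profile decomposition of Ramos~\cite{Ramos12}, and no argument is given. Your sketch is therefore not comparable to anything in the paper itself; what you have written is a reasonable outline of the standard strategy (inverse Strichartz inequality, iterative extraction, Pythagorean expansion from unitarity, and orthogonality of the symmetry sequences by contradiction), which is indeed the architecture of Ramos' proof in~\cite{Ramos12}.

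That said, if you intend this as a self-contained proof rather than a pointer to~\cite{Ramos12}, be aware that the genuine work lies in the inverse Strichartz step and in the precise formulation of orthogonality. The symmetry group here includes Lorentz boosts, and the four-case Definition~\ref{def:orthogonality} (Lorentz, rescaling, angular, spacetime translation) is tailored so that Proposition~\ref{prop:OrthoLfour} holds; verifying that failure of orthogonality forces precompactness of $(\Lambda^k_n)^{-1}\circ\Lambda^j_n$ requires handling each of these four cases and is not as immediate as in the translation--dilation setting. Your sketch glosses over this, which is fine as a plan but would need the details from~\cite{Ramos12} to be complete.
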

To use Theorem~\ref{thm:Linear_Profiles} with nonlinear solutions, we will need the following lemma. We recall from Proposition~\ref{prop:small_data_theory} that a \emph{solution} to~\eqref{eq:cubic_NLW} is a function $u\in L^4(\R^{1+3})$, with $\ubold\in C(\R; \PhaseSpace)$, that satisfies the fixed point equation 
\begin{equation}\label{eq:AppOneFixpt}
	u=S\fbold + \sigma \antibox(u^3),
\end{equation}
for a $\fbold \in \PhaseSpace(\R^3)$. We write $u=\Phi(\fbold)$. In particular, we are implicitly assuming that $u$ is a \emph{global} solution, in the sense that it is defined for all $t\in \R$. We will not consider non-global solutions.
\begin{lem}[Perturbation Lemma]\label{lem:perturb}
	Let $u=\Phi(\fbold)$. For $\Mtilde>0$, assume that 
	$\norm{\utilde}_\Lfour\le \Mtilde$, where $\utilde$ satisfies
	\begin{equation}\label{eq:perturb_sol}
	  \begin{array}{cc}
		\Ds \lim_{t\to -\infty} \lVert \ubold(t)-\utildebold(t)\rVert_\PhaseSpace=0, &\text{and }\norm{e}_{L^{4/3}(\R^{1+3})}\le \eps,
	  \end{array}
	\end{equation}
	where $e:=\Box\,\utilde -\sigma \utilde^3$ in distributional sense. Then
	\begin{equation}\label{eq:PertConcl}
		\norm{u-\utilde}_\Lfour + \sup_{t\in\R}\norm{\ubold(t)-\utildebold(t)}_{\Hcaldot^{1/2}}\le C(\Mtilde)\, \eps.
	\end{equation}	
\end{lem}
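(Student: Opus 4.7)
The plan is to run the standard partition-and-iterate argument for stability of subcritical wave equations, adapted to our setting where ``initial data'' is prescribed at $t=-\infty$. Set $w:=u-\utilde$. Since $\Box u=\sigma u^3$ and $\Box\utilde=\sigma\utilde^3+e$, and since the difference of the phase-space norms tends to zero at $-\infty$, the fixed point interpretation of $w$ is the integral identity
\begin{equation}\label{eq:perturb_plan_int}
w=\sigma\antibox(u^3-\utilde^3)-\antibox(e),
\end{equation}
which can be localised: on any interval $(T,T')$, denoting by $S_T$ the linear propagator started at time $T$, one has
\begin{equation}\label{eq:perturb_plan_loc}
w(t)=S_T\wbold(T)(t-T)+\sigma\antibox_T(u^3-\utilde^3)(t)-\antibox_T(e)(t),\quad t\in(T,T'),
\end{equation}
where $\antibox_T$ is $\antibox$ with the lower limit of integration replaced by $T$ (this satisfies the same Strichartz bound as $\antibox$ via Remark~\ref{rem:Inhomogeneous_Interval}).

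First I would fix a small threshold $\eta>0$, to be chosen depending only on the Strichartz constant $\StriConst$, and use the hypothesis $\norm{\utilde}_{\Lfour}\le \Mtilde$ to partition $\R$ into a finite number $N=N(\Mtilde,\eta)$ of intervals $I_1=(-\infty,T_1),\,I_2=(T_1,T_2),\ldots,I_N=(T_{N-1},+\infty)$ such that $\norm{\utilde}_{L^4(I_k\times\R^3)}\le\eta$ for every $k$. On a generic interval $I_k$, Proposition~\ref{prop:ConfStrichartz} applied to~\eqref{eq:perturb_plan_loc} together with the algebraic identity $u^3-\utilde^3=w(u^2+u\utilde+\utilde^2)$ and Hölder's inequality gives
\begin{equation}\label{eq:perturb_plan_holder}
\norm{w}_{L^4(I_k)}+\sup_{t\in I_k}\norm{\wbold(t)}_\PhaseSpace\le C\norm{\wbold(T_{k-1})}_\PhaseSpace+C\norm{w}_{L^4(I_k)}\bigl(\eta^2+\norm{w}_{L^4(I_k)}^2\bigr)+C\eps,
\end{equation}
with $T_0:=-\infty$ and $\wbold(T_0)=0$ by hypothesis. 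Choosing $\eta$ so small that $C\eta^2<\tfrac12$ and running a continuity (bootstrap) argument on the function $T\mapsto\norm{w}_{L^4((T_{k-1},T))}$, which is continuous and starts at $0$ when $T=T_{k-1}$, one concludes that for all $k$
\begin{equation}\label{eq:perturb_plan_rec}
\norm{w}_{L^4(I_k)}+\sup_{t\in I_k}\norm{\wbold(t)}_\PhaseSpace\le 4C\,\norm{\wbold(T_{k-1})}_\PhaseSpace+4C\eps,
\end{equation}
provided the right-hand side stays below a universal threshold (this is where the bootstrap must be closed).

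Finally I would iterate~\eqref{eq:perturb_plan_rec} across the $N$ intervals. Writing $a_k:=\norm{\wbold(T_k)}_\PhaseSpace$ with $a_0=0$, the recursion $a_k\le 4C\,a_{k-1}+4C\eps$ yields $a_k\le(4C)^k\eps$ and hence summing the $L^4$ contributions we get
\[\norm{w}_{\Lfour}+\sup_{t\in\R}\norm{\wbold(t)}_\PhaseSpace\le C(\Mtilde)\,\eps,\]
which is the desired conclusion. The main obstacle is closing the bootstrap in step~\eqref{eq:perturb_plan_rec}: one must propagate the smallness of $\norm{w}_{L^4(I_k)}$ along the iteration so that the cubic self-interaction term $C\norm{w}_{L^4(I_k)}^3$ in~\eqref{eq:perturb_plan_holder} does not spoil the linear estimate. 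This is handled by taking $\eps$ sufficiently small depending on $\Mtilde$ (and ultimately absorbing all constants into $C(\Mtilde)$), which is legitimate because if $\eps$ exceeds such a threshold then the claimed inequality~\eqref{eq:PertConcl} is trivial upon enlarging $C(\Mtilde)$ thanks to the a~priori $L^4$-bounds on both $u$ and $\utilde$.
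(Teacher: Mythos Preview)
Your argument is correct, but it takes a different route from the paper's. You use the standard partition--and--iterate scheme: chop $\R$ into finitely many intervals on which $\|\utilde\|_{L^4}$ is small, close a bootstrap on each piece, and propagate across the partition. The paper instead works directly on the growing interval $(-\infty,T)$, writes
\[
\|w\|_{L^4((-\infty,T)\times\R^3)}\le C\eps + C\|w\|_{L^4((-\infty,T)\times\R^3)}^3 + C\|\utilde^2 w\|_{L^{4/3}((-\infty,T)\times\R^3)},
\]
and absorbs the linear-in-$w$ term $\utilde^2 w$ via a Gronwall-type inequality (Lemma~8.1 of Fang--Xie--Cazenave), arriving at $\|w\|_{L^4((-\infty,T))}\le C_{\Mtilde}(\eps+\|w\|_{L^4((-\infty,T))}^3)$ in one step, after which a single bootstrap in $T$ finishes the job. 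Your approach is more self-contained (no external lemma) and is the one usually seen in the profile-decomposition literature; the paper's is shorter once the Gronwall lemma is taken for granted. Both give the same constant dependence $C(\Mtilde)$.

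One small remark on your closing sentence: the claim that for large $\eps$ the inequality is ``trivial'' thanks to a~priori $L^4$-bounds on $u$ is not quite justified, since the lemma gives no quantitative information on $\fbold$ and hence none on $\|u\|_{L^4}$ in terms of $\Mtilde$. The paper handles this simply by requiring $\eps$ sufficiently small depending on $\Mtilde$, which is all that the applications need; you should do the same rather than appeal to a bound you do not have.
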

\begin{proof} The assumptions~\eqref{eq:perturb_sol} imply that $\utilde$ satisfies the fixed-point equation 
\begin{equation}\label{eq:perturb_fixpoint}
  \utilde= S\fbold + \sigma \antibox(\utilde^3) +\antibox e,
\end{equation}
so the difference $w:=\utilde-u$ satisfies $w=\sigma \antibox(\utilde^3-u^3)+\antibox e$.  We now estimate $w$ on a time interval $(-\infty, T)\subset \R$ via the Strichartz inequality~\eqref{eq:ConfStrichartz}, which holds on such time intervals because of Remark~\ref{rem:Inhomogeneous_Interval};
\begin{equation}\label{eq:w_est}
	\begin{split}
	\norm{w}_{L^4((-\infty, T)\times \R^3)}&\le C\eps + C\abs{\sigma} \lVert (\utilde +w)^3-\utilde^3\rVert_{L^\frac43((-\infty, T)\times \R^3)} \\ 
	&\le C(\eps + \norm{w}^3_{L^4((-\infty, T)\times \R^3)})+C \norm{\utilde^2 w}_{L^\frac43((-\infty, T)\times \R^3)}.
	\end{split}
\end{equation}
The Gronwall-type inequality of~\cite[Lemma~8.1]{FaXiCa11} now implies that
\begin{equation}\label{eq:CazenaveFangXi_aftermath}
	\norm{w}_{L^4((-\infty, T)\times \R^3)}\le C_{\Mtilde} (\eps + \norm{w}^3_{L^4((-\infty, T)\times \R^3)}).
\end{equation}
Therefore, if $T\in \R$ is such that $\norm{w}_{L^4((-\infty, T)\times \R^3)}\le 2C_{\Mtilde} \eps$, then $$\norm{w}_{L^4((-\infty, T)\times \R)}\le C_{\Mtilde}\eps + C_{\Mtilde}(2C_{\Mtilde}\eps)^3\le \frac32C_{\Mtilde}\eps, $$ 
provided that $\eps$ is sufficiently small. By the bootstrap method, this proves the inequality $\norm{w}_{L^4(\R^{1+3})}\le \tfrac32C_{\Mtilde}\eps$. 

The same argument with $\sup_{t\in\R}\norm{\wbold(t)}_{\Hcaldot^{1/2}}$ in place of $\norm{w}_{\Lfour}$ concludes the proof.
\end{proof}                                                                                                                                                                                                                                                                                                                

\begin{cor}\label{cor:nonlinear_profdecomp}
  Let $A>0$ be such that, if $\norm{\fbold}_{\Hcaldot^{1/2}} \le A$, then there exists a unique solution $u=\Phi(\fbold)$. If  $u_n=\Phi(\fbold_n)$ satisfies $\norm{\fbold_n}_{\Hcaldot^{1/2}}\le A$, we associate to each profile $(\Fbold^j, \Lambda^j_n)$ in~\eqref{eq:LinProfiles} the \emph{nonlinear profile}
  \begin{equation}\label{eq:NonlinProfile}
    U^j:=\Phi(\Fbold^j).
  \end{equation}
  Then
  \begin{equation}\label{eq:nonlinear_profdecomp}
   u_n(t, x) = \sum_{j=1}^J \lambda^{(j)}_nU^j(\Lambda^j_n(t, x))+ S\rbold^J_n(t, x) + h^J_n(t, x),
  \end{equation}
  where $\rbold^J_n$ is the same as in~\eqref{eq:profile_decomposition}, while $h^J_n$ is a sequence that satisfies the vanishing condition
  \begin{equation}\label{eq:nonlinear_remainder}
    \lim_{J\to \infty} \limsup_{n\to \infty}\Tonde{ \norm{h^J_n}_\Lfour + \sup_{t\in \R} \norm{\hbold^J_n(t)}_{\Hcaldot^{1/2}} }= 0.
  \end{equation}
\end{cor}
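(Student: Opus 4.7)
The natural strategy is to build an explicit approximate solution from the nonlinear profiles, estimate its defect from being an exact solution, and then invoke the Perturbation Lemma \ref{lem:perturb}. Define
\begin{equation}
\utilde_n^J(t,x) := \sum_{j=1}^J \lambda_n^{(j)} U^j(\Lambda^j_n(t,x)) + S\rbold_n^J(t,x).
\end{equation}
First I would verify that this is meaningful and has uniformly bounded $\Lfour$ norm. The Pythagorean expansion \eqref{eq:Pythagorean_energy} yields $\sum_j \lVert \Fbold^j\rVert_\PhaseSpace^2 \le A^2$, so for $j$ sufficiently large the small data theory of Proposition~\ref{prop:small_data_theory} applies and furnishes $U^j$ with $\lVert U^j\rVert_{\Lfour}\lesssim \lVert\Fbold^j\rVert_\PhaseSpace$. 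Thus $\sum_j \lVert U^j\rVert_{\Lfour}^4 \lesssim A^4$, and the $\Lfour$-orthogonality furnished by Proposition~\ref{prop:OrthoLfour} gives $\lVert \utilde_n^J\rVert_{\Lfour}^4 \le C(A) + o_n(1)$, uniformly in $J$.

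The next step is to write down the equation satisfied by $\utilde_n^J$. Each nonlinear profile satisfies $U^j = S\Fbold^j + \sigma\antibox((U^j)^3)$; applying the identity \eqref{eq:GeneralAntiboxCommute} with $\Lambda=\Lambda^j_n$ and using Lemma~\ref{lem:LinearPoincareAction}, I obtain
\begin{equation}
\lambda^{(j)}_n(U^j\circ \Lambda^j_n)= S\fbold^j_{\Lambda^j_n} + \sigma\antibox\!\bigl((\lambda^{(j)}_n U^j\circ\Lambda^j_n)^3\bigr).
\end{equation}
Summing over $j$ and using the linear profile decomposition \eqref{eq:profile_decomposition} to reconstruct $S\fbold_n$, I obtain
\begin{equation}
\utilde_n^J = S\fbold_n + \sigma\antibox\Bigl(\sum_{j=1}^J (\lambda^{(j)}_n U^j\circ\Lambda^j_n)^3\Bigr),
\end{equation}
so $\utilde_n^J$ solves $\Box\utilde_n^J - \sigma(\utilde_n^J)^3 = \sigma e_n^J$ in distribution, with
\begin{equation}
e_n^J = \sum_{j=1}^J (\lambda^{(j)}_n U^j\circ\Lambda^j_n)^3 - (\utilde_n^J)^3.
\end{equation}
This is where the orthogonality does the work: expanding the cube, $e_n^J$ consists of cross terms mixing different profiles and of terms involving $S\rbold_n^J$. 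The cross terms vanish in $L^{4/3}$ norm by H\"older and Proposition~\ref{prop:OrthoLfour}, while the $S\rbold_n^J$-terms are bounded by H\"older in terms of $\lVert S\rbold_n^J\rVert_{\Lfour}$, which vanishes in the iterated limit by \eqref{eq:profile_decomposition_smallness_remainder_term}. Hence $\lim_{J\to\infty}\limsup_{n\to\infty} \lVert e_n^J\rVert_{L^{4/3}}=0$.

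Finally, to apply Lemma~\ref{lem:perturb} I need to check the ancient-past matching $\lim_{t\to-\infty}\lVert \ubold_n(t)-\utildebold_n^J(t)\rVert_\PhaseSpace=0$, which follows because both $u_n$ and $\utilde_n^J$ have $S\fbold_n$ as their linear scattering data at $-\infty$: for $u_n$ this is the definition of $\Phi$, and for $\utilde_n^J$ it is immediate from the displayed identity $\utilde_n^J = S\fbold_n + \sigma\antibox(\cdots)$ together with Proposition~\ref{prop:ConfStrichartz}. The Perturbation Lemma then yields $\lVert u_n-\utilde_n^J\rVert_{\Lfour} + \sup_t\lVert \ubold_n(t)-\utildebold_n^J(t)\rVert_\PhaseSpace \le C(A)\lVert e_n^J\rVert_{L^{4/3}}$, which gives \eqref{eq:nonlinear_remainder} upon setting $h_n^J := u_n - \utilde_n^J$. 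The main obstacle I anticipate is organizing the expansion of $e_n^J$ so that the orthogonality in Proposition~\ref{prop:OrthoLfour} is actually applicable to each summand (choosing compatible exponents $(\alpha,\beta)$ for each type of cross term and justifying density/approximation of $U^j$ by $\Lfour$ functions where necessary).
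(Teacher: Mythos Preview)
Your proposal is correct and follows essentially the same route as the paper: define the approximate solution $\utilde_n^J$ from the nonlinear profiles plus the linear remainder, bound its $L^4$ norm uniformly via profile orthogonality and the Pythagorean expansion, compute the defect $e_n^J$ and show it vanishes in $L^{4/3}$ by orthogonality and the smallness of $S\rbold_n^J$, check the ancient-past matching, and invoke Lemma~\ref{lem:perturb}. The only cosmetic difference is that you derive $\Box\utilde_n^J$ by summing the transformed fixed-point equations via~\eqref{eq:GeneralAntiboxCommute}, whereas the paper simply writes down $\Box\utilde_n^J-\sigma(\utilde_n^J)^3$ directly; also note that the Pythagorean expansion already gives $\lVert\Fbold^j\rVert_\PhaseSpace\le A$ for \emph{every} $j$, so there is no need to restrict to ``$j$ sufficiently large''.
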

\begin{proof}
	To apply Lemma~\ref{lem:perturb}, we fix $J\in\N$ and we denote 
	\begin{equation}\label{eq:utilde_n}
		\utilde_n^J(t, x)=\sum_{j=1}^J \lambda^{(j)}_n U^j_n(\Lambda^j_n(t,x)) + S\rbold^J_n.
	\end{equation}
	By orthogonality of the sequences $\Lambda^j_n$ (see Proposition~\ref{prop:OrthoLfour}), and by the vanishing property~\eqref{eq:profile_decomposition_smallness_remainder_term} of $S\rbold^J_n$, we can find a sequence $\eps^J_n\ge 0$ satisfying $\lim_{J} \limsup_{n} \eps^J_n=0$ and such that
	\begin{equation}\label{eq:control_utilde}
		\begin{split}
			\norm{\utilde_n^J}_\Lfour^4& =\sum_{j=1}^J \norm{U^j}_\Lfour^4 + \eps^J_n \\  
			&\le C(\sum_{j=1}^J \norm{\Fbold^j}_{\Hcaldot^{1/2}}^2)^2 + \eps^J_n\le C_AA^4,
		\end{split}
	\end{equation}
where we used the estimate~\eqref{eq:crude_estimates} and the Pythagorean expansion~\eqref{eq:Pythagorean_energy}. We remark that the estimate~\eqref{eq:control_utilde} is uniform in $J$. In order to  apply the perturbation Lemma~\ref{lem:perturb}, we notice that, by~\eqref{eq:profile_decomposition}, 
\begin{equation}\label{eq:smallness_infinity}
	\lim_{t\to -\infty} \norm{ \ubold_n(t) - \utildebold_n^J(t)}_{\Hcaldot^{1/2}} = 0,
\end{equation}
and, moreover,
\begin{equation}\label{eq:utilde_eqn}
	\begin{split}
	e^J_n:&= \Box\, \utilde^J_n -\sigma(\utilde^J_n)^3\\ 
	&=-\sigma \Quadre{
		\Tonde{ 
			\sum_{j=1}^J \lambda^{(j)}_n U^j\circ \Lambda^j_n + S\rbold^J_n
			}^3 -\sum_{j=1}^J \Tonde{ \lambda^{(j)}_n U^j\circ \Lambda^j_n}^3 
			},
	\end{split}
\end{equation}
so, again by orthogonality of $\{ \Lambda^j_n : j=1\ldots J\}$ and vanishing of $S\rbold^J_n$, 
\begin{equation}\label{eq:ErrSmallness}
	\lim_{J\to \infty} \limsup_{n\to \infty} \norm{e^J_n}_\Lfour=0.
\end{equation}
We thus obtain~\eqref{eq:nonlinear_remainder}, concluding the proof.
\end{proof}                                                                                                                                                                                                                                                                                                                
\begin{rem}\label{rem:LfourOrtho}
 Proposition~\ref{prop:OrthoLfour} also implies that 
 \begin{equation}\label{eq:LfourProfiles}
    \norm{u_n}_\Lfour^4 = \sum_{j=1}^J \norm{U^j}^4_\Lfour  +\norm{S\rbold^J_n}_\Lfour^4 + \eps^J_n,
 \end{equation}
  where 
  \begin{equation}\label{eq:EpsilonJn}
    \lim_{J\to\infty} \limsup_{n\to\infty} \eps^J_n=0.
  \end{equation}
\end{rem}

\section{The $\PhaseSpace$ norm is not Lorentz-invariant}
The lemma which we prove in this section immediately implies the existence of smooth solutions $u$ to~\eqref{eq:cubic_NLW} such that $\norm{\ubold(t)}_\PhaseSpace$ is not preserved by time translations and Lorentzian transformations. We recall from Section~\ref{sym} that, for all $\alpha\in(-1, 1)$,
\begin{equation}\label{eq:Lalpha_recall}
	\begin{array}{cc}
		L^\alpha(t, x)=(\gamma t- \gamma \alpha x_1, \gamma x_1-\gamma\alpha t, x_2, x_3), &\text{where }\gamma=(1-\alpha^2)^{-1/2}.
	\end{array}
\end{equation}
\begin{lem}\label{lem:time_derivative}
  Let $u$ be a smooth global solution to $\Box\, u=\sigma u^3$ on $\R^{1+3}$. Then 
  \begin{equation}\label{eq:time_derivative}
    \frac{\partial}{\partial t_0}\norm{\ubold(t_0)}_{{\Hcaldot^{1/2}}}^2 = 2 \sigma \int_{\R^3}(-\Delta)^{-{1/2}}(u_t(t_0, \cdot))u^3(t_0, x)\, dx,
  \end{equation}
  and, letting $u_\alpha:=u\circ L^\alpha$, 
  \begin{equation}\label{eq:Lorentz_derivative}
  	\left.\frac{\partial}{\partial \alpha}\norm{\ubold_\alpha(t_0)}_\PhaseSpace^2\right|_{\alpha=0} = -2\sigma\int_{\R^3} x_1(-\Delta)^{-{1/2}}(u_t(t_0, \cdot))u^3(t_0, x)\, dx.
\end{equation}
\end{lem}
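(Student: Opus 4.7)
The plan is to compute both derivatives on the Fourier side in~$x$, where
\[ \norm{\fbold}_{\Hcaldot^{1/2}}^2 = \int_{\R^3}\!\left(|\xi|^2|\hat f_0(\xi)|^2 + |\hat f_1(\xi)|^2\right)\frac{d\xi}{|\xi|} \]
makes the symbols of $(-\Delta)^{\pm 1/2}$ transparent. For~\eqref{eq:time_derivative}, I differentiate under the $\xi$-integral in~$t_0$ and substitute $\widehat{u_{tt}}=-|\xi|^2\hat u+\sigma\widehat{u^3}$ for $\partial_t^2\hat u$. The two quadratic cross-terms $\pm 2|\xi|^2\Re(\hat u\,\overline{\hat u_t})$ cancel exactly, leaving $2\sigma|\xi|^{-1}\Re(\widehat{u^3}\,\overline{\hat u_t})$ in the integrand; Plancherel then yields the right-hand side of~\eqref{eq:time_derivative}.

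For~\eqref{eq:Lorentz_derivative}, set $u_\alpha:=u\circ L^\alpha$. Using $\gamma|_{\alpha=0}=1$ and $\partial_\alpha\gamma|_{\alpha=0}=0$, a direct chain-rule calculation gives, at~$(t_0,x)$,
\[ \partial_\alpha u_\alpha|_{\alpha=0}=-x_1 u_t - t_0 u_{x_1},\qquad \partial_\alpha\partial_t u_\alpha|_{\alpha=0}=-x_1 u_{tt}-u_{x_1}-t_0 u_{tx_1}. \]
Inserting these into the expansion of $\partial_\alpha\norm{\ubold_\alpha(t_0)}_\PhaseSpace^2|_{\alpha=0}$, the resulting expression splits into a ``translation piece'' proportional to~$t_0$, namely
\[ -2t_0\int_{\R^3}\!\left((-\Delta)^{1/2}u\cdot u_{x_1}+(-\Delta)^{-1/2}u_t\cdot u_{tx_1}\right)dx, \]
and a ``boost piece'' carrying the $x_1$ factors. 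The translation piece vanishes identically: moving half a power of $(-\Delta)^{\pm 1/2}$ across by self-adjointness and commuting it with $\partial_{x_1}$, each integrand reduces to $g\,\partial_{x_1}g$ for $g\in\{(-\Delta)^{1/4}u,\,(-\Delta)^{-1/4}u_t\}$, so a single integration by parts gives zero.

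For the boost piece, substituting $u_{tt}=\Delta u+\sigma u^3$ separates a nonlinear contribution $-2\sigma\int x_1\,(-\Delta)^{-1/2}(u_t)\,u^3\,dx$, which is already the right-hand side of~\eqref{eq:Lorentz_derivative} since $x_1$ and $(-\Delta)^{-1/2}(u_t)$ are functions of~$x$ multiplied pointwise, from a linear contribution
\[ -2\int x_1 u_t\,(-\Delta)^{1/2}u\,dx+2\int x_1(-\Delta)^{-1/2}u_t\cdot(-\Delta)u\,dx-2\int (-\Delta)^{-1/2}u_t\cdot u_{x_1}\,dx. \]
This linear contribution is what must vanish; morally, it encodes the Lorentz invariance of the $\Hcaldot^{1/2}$ norm along linear waves, already stated in Lemma~\ref{lem:LinearPoincareAction}.

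The hard part is showing that the linear contribution does vanish, and the difficulty is entirely that $x_1$ does not commute with $(-\Delta)^{1/2}$. Setting $h:=(-\Delta)^{-1/2}u_t$, the first two integrals combine into $2\int [(-\Delta)^{1/2},x_1]h\cdot(-\Delta)^{1/2}u\,dx$, and the key ingredient is the commutator formula $[(-\Delta)^{1/2},x_1]=-\partial_{x_1}(-\Delta)^{-1/2}$, obtained on the Fourier side from the observation that $x_1$ acts as $i\partial_{\xi_1}$ together with $\partial_{\xi_1}|\xi|=\xi_1/|\xi|$. One further integration by parts in $x_1$ then converts the combined expression into $2\int(-\Delta)^{-1/2}u_t\cdot u_{x_1}\,dx$, cancelling the third term. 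All integrations by parts are justified by the smoothness and decay of~$u$, which make the $\Hcaldot^{1/2}$-level integrals absolutely convergent with no boundary contribution.
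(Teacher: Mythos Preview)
Your proof is correct and follows essentially the same route as the paper: differentiate directly, use the equation to substitute for $u_{tt}$, observe that the linear pieces cancel via a commutator identity, and read off the nonlinear remainder. The only cosmetic difference is that the paper invokes $[(-\Delta)^{-1/2},x_1]=(-\Delta)^{-3/2}\partial_{x_1}$ whereas you use the equivalent $[(-\Delta)^{1/2},x_1]=-\partial_{x_1}(-\Delta)^{-1/2}$; the cancellation of the three linear terms is the same either way.
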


\begin{proof}
 We recall that $\ubold(t_0)$ denotes the pair $(u(t_0, \cdot), u_t(t_0, \cdot))$. Using the equation, we obtain 
 \begin{equation}\label{eq:deriv_wbold}
 	\partial_{t_0}\ubold(t_0)=(u_t(t_0, \cdot), \Delta u(t_0, \cdot) +\sigma u^3(t_0, \cdot)).
\end{equation}
 Therefore
 \begin{equation}\label{eq:DerivNorm}
 	\begin{split}
 	&\partial_{t_0}\norm{\ubold(t_0)}_{\Hcaldot^{1/2}}^2 = 2\Braket{ \ubold(t_0)| \partial_{t_0} \ubold(t_0)}_{{\Hcaldot^{1/2}}} \\
	&= 2\int_{\R^3}(-\Delta)^{1/2} u(t_0, x) u_t(t_0, x) \,dx +2\int_{\R^3}(-\Delta)^{-{1/2}}u_t(t_0, x)\Delta u(t_0, x)\,dx\\ &\quad+2\sigma \int_{\R^3} (-\Delta)^{-{1/2}}u_t(t_0, x) u^3(t_0, x)\, dx.
	\end{split}
\end{equation}
Since $(-\Delta)^{-{1/2}}\Delta=-(-\Delta)^{{1/2}}$, the first two summands cancel, yielding~\eqref{eq:time_derivative}. 

To prove~\eqref{eq:Lorentz_derivative}, we begin by observing that 
\begin{equation}\label{eq:Lorentz_derivative_ubold}
	\left.\partial_\alpha \ubold_\alpha(t_0)\right|_{\alpha=0} = -(x_1 \partial_{t_0} +t_0\partial_{x_1})\ubold(t_0)-(0, \partial_{x_1} u(t_0)).
\end{equation}
Integration by parts immediately shows that $\Braket{\ubold(t_0) | t_0\partial_{x_1} \ubold(t_0)}_\PhaseSpace=0$. So, reasoning as before and using~\eqref{eq:deriv_wbold},  we obtain
\begin{equation}\label{eq:Lorentz_computation}
	\begin{split}
		 &-\frac{1}{2}\partial_{\alpha=0} \norm{\ubold_\alpha(t_0)}_\PhaseSpace^2= \Braket{ \ubold(t_0) | x_1\partial_{t_0} \ubold(t_0) +(0, \partial_{x_1}u(t_0)}_\PhaseSpace \\ 
		&= \int_{\R^3} 
		u_t (-\Delta)^{-\frac12} (x_1 \Delta u)+ (-\Delta)^{\frac12} u x_1 u_t   +(-\Delta)^{-\frac12}u_t \partial_{x_1} u +\sigma (-\Delta)^{-\frac12} u_t x_1  u^3.
	\end{split}
\end{equation}
Now, using the elementary commutator identity $[(-\Delta)^{-\frac12}, x_1]=(-\Delta)^{-\frac32}\partial_{x_1}$, we see that the first three summands cancel. This completes the proof.
\end{proof}                                                                                                                                                                                                                                                                                                                
It is very easy to construct smooth solutions to~\eqref{eq:cubic_NLW} such that the derivatives in~\eqref{eq:time_derivative} and~\eqref{eq:Lorentz_derivative} do not vanish. For example, if $f_0\ne 0$ is a smooth function with compact support and $f_1=f_0^3$, then if $\eps>0$ is sufficiently small there exists a unique smooth  solution $u$ to 
\begin{equation}\label{eq:NLW_initial_value_probl_eps}
	\begin{cases} 
		\Box\, u = \sigma u^3, &\text{on } \R^{1+3}, \\
		\ubold(0)=\eps\fbold,
	\end{cases}
\end{equation}
and by~\eqref{eq:time_derivative}, $\partial_{t_0=0} \norm{\ubold(t_0)}_\PhaseSpace^2=\norm{f_0^3}_{\Hdot^{-1/2}}^2\!\ne 0$. Taking $f_1=x_1 f_0^3$, we analogously obtain a solution such that $\partial_{\alpha=0} \norm{\ubold_\alpha (0)}^2_\PhaseSpace=\norm{x_1f_0^3}_{\Hdot^{-1/2}}^2\! \ne 0$. 

\section{Geometry of the set of maximizers} \label{app:geometry}
In this section we use the notation
\begin{equation}\label{eq:Gammaalpha_notation}
	\Gammaalpha \fbold(x):=\lambda \Ph_\theta \left.\vbold(\lambda L^\beta(t-t_0, x-x_0))\right|_{t=0},
\end{equation}
where $v= S\fbold$, $\vbold=(v, \partial_t v)$, the phase shift operator $\Ph_\theta$ is defined in~\eqref{eq:PhaseSymmetry}, and $\alphabold=(\lambda, \theta, \beta , t_0, x_0)$ belongs to
\begin{equation}\label{eq:alpha_bold}
	\Arom:= (0, \infty)\times \SSS^1\times\{\beta\in\R^3\,:\,\abs\beta<1\}\times \R\times \R^3.
\end{equation}
As mentioned in the fifth section, the set $\Mrom$ of extremizers of the Strichartz inequality is  
\begin{equation}\label{eq:Mrom_Unitary}
	\Mrom=\Set{ c\Gammaalpha \fbold_0 : c\ge 0,\ \alphabold \in \Arom},
\end{equation}
where $\fbold_0=\abs{\SSS^3}^{-1/2}(2(1+\abs\cdot^2)^{-1}, 0)$; here the normalization factor ensures that $\fbold_0$ has unit norm. We remark that each $\Gammaalpha$ is a unitary operator of~$\PhaseSpace$ onto itself and that $\Gamma_{\!\obold}$ is the identity.
\begin{lem}\label{lem:alphabold_bijective}
	The map
	\begin{equation}\label{eq:parameterization}
		(c, \alphabold)\in (0, \infty)\times \Arom \mapsto c\Gammaalpha\fbold_0 \in \Mrom\setminus\{\obold\}
	\end{equation}
	is injective, hence a bijection.
\end{lem}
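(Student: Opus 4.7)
Surjectivity is immediate from~\eqref{eq:Mrom_Unitary}, so the task reduces to showing injectivity. Assume $c\Gammaalpha \fbold_0 = c'\Gamma_{\!\alphabold'}\fbold_0$ with $c,c' > 0$. Since each $\Gamma_{\!\bullet}$ is unitary on $\PhaseSpace$ and $\|\fbold_0\|_{\PhaseSpace} = 1$, comparing $\PhaseSpace$ norms instantly yields $c = c'$, so the remaining task is to show that $\Gammaalpha \fbold_0 = \Gamma_{\!\alphabold'}\fbold_0$ forces $\alphabold = \alphabold'$.

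My plan is to transfer this equality, via the Penrose transform of Section~\ref{pen}, from $\R^{1+3}$ to the Einstein universe $\R \times \SSS^3$. There $v_0 = S\fbold_0$ corresponds to $V_0 = |\SSS^3|^{-1/2}\cos T$, the phase operator $\Ph_\theta$ acts as a cosmic-time shift $T \mapsto T - \theta$, and the Minkowski transformation $L^\beta\big(\lambda(t-t_0),\lambda(x-x_0)\big)$ lifts to a conformal automorphism of $\R \times \SSS^3$ determined by $(\lambda, \beta, t_0, x_0)$. The hypothesis then says that, on the Einstein universe, $\cos(T - \theta)$ and $\cos(T - \theta')$ agree when pulled back by two such automorphisms.

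To extract the parameters I would proceed one at a time from the unique Penrose-transformed function $V_{\alphabold}$: the translation $(t_0, x_0)$ as the Penrose preimage of the unique spacetime point at which the natural scalar invariant $V_{\alphabold}^2 + (\partial_T V_{\alphabold})^2$ attains its maximum; the dilation $\lambda$ from the Hessian of this invariant at that point; the boost $\beta$ from the direction of the energy-momentum vector at the centering point; and finally the phase $\theta$ from the ratio of $V_{\alphabold}$ to $\lambda^{-1}\partial_T V_{\alphabold}$ there, once the previous normalizations have been undone. I expect the main obstacle to be the explicit bookkeeping needed to identify the Penrose-lifted conformal automorphism corresponding to each $\alphabold \in \Arom$ and to verify that the assignment $\alphabold \mapsto \Psi_{\alphabold}$ is itself injective onto a subgroup of the conformal group of $\R \times \SSS^3$; once that is done, each of the extraction steps above becomes a direct, if somewhat tedious, calculation using the explicit form $V_0 = |\SSS^3|^{-1/2}\cos T$.
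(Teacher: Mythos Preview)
Your reduction to $c=c'$ and then to showing $\Gammaalpha\fbold_0=\fbold_0\Rightarrow\alphabold=\obold$ matches the paper, but from there the paper takes a far more elementary route than your Penrose scheme. It uses the energy--momentum transformation law: since $v_\obold$ is radial, its momentum $\Pbold(v_\obold)$ vanishes, and the identity
\[
\big(E(v_\alphabold),\Pbold(v_\alphabold)\big)=\lambda\,L^{-\beta}\big(E(v_\obold),0\big)
\]
immediately forces $\lambda\gamma=1$ and $\lambda\gamma\beta=0$, hence $\lambda=1$ and $\beta=0$. With boost and dilation gone, the remaining parameters $(t_0,x_0,\theta)$ are read off by equating spatial Fourier transforms of $v_\theta(\cdot-t_0,\cdot-x_0)$ and $v_0$, using that $\hat f_0(\xi)=Ce^{-|\xi|}/|\xi|$ is nowhere zero. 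The whole argument is a few lines.

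Your Penrose plan, besides being much heavier, has a concrete gap at the very first extraction step. For any pure phase shift $\alphabold=(1,\theta,0,0,0)$ one has $V_\alphabold=|\SSS^3|^{-1/2}\cos(T-\theta)$, so
\[
V_\alphabold^2+(\partial_T V_\alphabold)^2\equiv |\SSS^3|^{-1}
\]
is constant on $\R\times\SSS^3$; there is no unique maximum point from which to recover $(t_0,x_0)$. You would at minimum need to prove that this degeneracy occurs \emph{only} when $(\lambda,\beta,t_0,x_0)$ is trivial, and that otherwise the location of the maximum depends on $(t_0,x_0)$ alone rather than on the full tuple $(\lambda,\beta,t_0,x_0)$---neither of which is clear, since dilations and boosts also displace points under the Penrose conjugation. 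The later extraction steps inherit the same ambiguity. The bookkeeping you flag is real, but the invariants themselves need to be chosen differently before the plan can go through; in any case, the paper's energy--momentum plus Fourier argument is both shorter and avoids these issues entirely.
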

This lemma, which we will prove at the end of the section, implies that $\Mrom\setminus\{\obold\}$ is a smooth 10-dimensional manifold parameterized by~\eqref{eq:parameterization}. The tangent space at $\fboldstar\ne \obold$ is 
\begin{equation}\label{eq:tangent_space_at_cgammaalpha}
	\begin{array}{cc}
	\Ds T_{\fboldstar}\Mrom=\Span\Set{ \Gammaalpha\fbold_0, 	\partial_{\alpha_i} \Gammaalpha \fbold_0 : i=1, 2,\ldots, 9},& \fboldstar=c\Gammaalpha\fbold_0,\ c\ne 0.
	\end{array}
\end{equation}
We will require a further lemma, which follows immediately from the explicit computations of the third section of~\cite{Negro18}.
\begin{lem}\label{lem:non_singular_matrix}
	The matrix 
	\begin{equation}\label{eq:mat_scalprods}
		M_0:=\begin{bmatrix} \Braket{ \partial_{\alpha_i}\big\rvert_{\alphabold=\obold}\Gammaalpha \fbold_0 | \partial_{\alpha_j}\big\rvert_{\alphabold=\obold}\Gammaalpha \fbold_0}_{\PhaseSpace}
 \end{bmatrix}_{ i, j=1\ldots 9}
 	\end{equation}
	is nonsingular and positive definite.
\end{lem}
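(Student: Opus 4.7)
The plan is to recognize $M_0$ as a Gram matrix in $\PhaseSpace(\R^3)$ and thereby reduce positive definiteness to linear independence of the nine tangent vectors, and then to carry out the independence check using the rotational symmetry of~$\fbold_0$.

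First, recall that for any collection of nine vectors $w_1,\ldots,w_9$ in a Hilbert space, the matrix $[\Braket{w_i|w_j}]$ is always positive semi-definite, and is positive definite if and only if the $w_i$ are linearly independent. Since $M_0$ is by construction the Gram matrix in $\PhaseSpace$ of the nine tangent vectors $\partial_{\alpha_i}\big\rvert_{\alphabold=\obold}\Gammaalpha\fbold_0$, the lemma is equivalent to the statement that these nine vectors span a $9$-dimensional subspace of $\PhaseSpace(\R^3)$.

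Second, I would compute each tangent vector by differentiating the expression in~\eqref{eq:Gammaalpha_notation} at $\alphabold=\obold$. The nine coordinates split naturally as one dilation, one phase, three Lorentz boosts, one time translation, and three spatial translations. With the explicit expression $\fbold_0=\abs{\SSS^3}^{-1/2}(2/(1+\abs{\cdot}^2),0)$ and $v_0=S\fbold_0$ in hand, each partial derivative has a closed form; the phase derivative gives $(0,f_0)$, the spatial translations give $(-\partial_{x_i}f_0,0)$, the time translation gives $(0,\Delta f_0)$ via $\partial_{tt}v|_{t=0}=\Delta f_0$, and dilation and boosts yield similarly explicit formulas in $\PhaseSpace$.

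Third, I would exploit the $SO(3)$-equivariance of the construction. Since $\fbold_0$ is radial, the natural unitary action of $SO(3)$ on $\PhaseSpace$ fixes $\fbold_0$ and induces a representation on tangent directions: dilation, phase and time translation are scalars, while the three boost generators and the three spatial translation generators each span a copy of the standard vector representation of $SO(3)$. Consequently, $M_0$ block-diagonalizes into a $3\times 3$ scalar block and, by Schur's lemma applied to two isomorphic irreducibles, a vectorial block of the form $A\otimes I_3$, with $A$ a $2\times 2$ symmetric matrix encoding the boost/translation pairings. Positive definiteness of $M_0$ thus reduces to the non-vanishing of two small determinants, each an explicit scalar computation with the formulas above.

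The main obstacle is the bookkeeping of $\PhaseSpace$ inner products involving the fractional Laplacians $(-\Delta)^{\pm 1/2}$ applied to rational functions of $\abs{x}$. These calculations, together with the verification that the two reduced determinants are strictly positive, are exactly what is carried out in the third section of~\cite{Negro18}; invoking them completes the proof.
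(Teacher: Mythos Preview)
Your proposal is correct and, in fact, more detailed than the paper's own treatment: the paper simply states that the lemma ``follows immediately from the explicit computations of the third section of~\cite{Negro18}'' with no further argument, whereas you supply the structural reasons (Gram matrix reduction to linear independence, $SO(3)$ block-diagonalization into scalar and vector pieces) before invoking that same reference. Since both arguments ultimately rest on the same cited computations, there is no substantive difference in approach.
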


We can now state the main result of this section.
\begin{prop}\label{prop:Metric_Projection}
	For every $\fbold\in\PhaseSpace$ there exists $\fboldstar\in \Mrom$ such that 
	\begin{equation}\label{eq:distance_minimize}
		\norm{\fbold-\fboldstar}_\PhaseSpace =\dist(\fbold, \Mrom), 
	\end{equation}
	and, if $\fboldstar \ne \obold$, then $\fbold-\fboldstar\, \bot\, T_{\fboldstar}\Mrom$, that is
	\begin{equation}\label{eq:Minimizer_Orthogonal}
		\begin{array}{cc}
		\Braket{\fbold-\fboldstar | \gbold}_\PhaseSpace= 0, & \forall\ \gbold\in T_{\fboldstar} \Mrom.
		\end{array}
	\end{equation}
	Moreover, there is a constant $\rho\in(0,1)$ such that, if 
	\begin{equation}\label{eq:distance_small_norm}
		\dist(\fbold, \Mrom)< \rho\norm{\fbold}_\PhaseSpace, 
	\end{equation}
	then $\fboldstar$ is uniquely determined.
\end{prop}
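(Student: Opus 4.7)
Plan: Existence and the orthogonality condition follow standard lines; the genuine content of the statement is the quantitative uniqueness under the smallness hypothesis.

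For existence, let $\fboldstar^{(n)} = c_n \Gamma_{\alphabold_n}\fbold_0 \in \Mrom$ be a minimizing sequence, where $c_n = \|\fboldstar^{(n)}\|_\PhaseSpace$ since $\Gammaalpha$ is unitary and $\|\fbold_0\|_\PhaseSpace = 1$. The triangle inequality bounds $c_n$, so up to subsequence $c_n \to c_\star \ge 0$. If $c_\star = 0$, then $\fboldstar^{(n)}\to\obold$ strongly and $\obold$ works. Otherwise, either a subsequence of $\alphabold_n$ converges in $\Arom$ --- in which case continuity of the parameterization from Lemma~\ref{lem:alphabold_bijective} produces a strong limit $\fboldstar \in \Mrom$ realizing the distance --- or some parameter escapes ($\lambda_n\to 0$ or $\infty$, $|\beta_n|\to 1$, or $|t_n|+|x_n|\to\infty$). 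In the latter case, a direct computation from the explicit form of $\Gammaalpha\fbold_0$ shows $\Gamma_{\alphabold_n}\fbold_0 \rightharpoonup \obold$ weakly in $\PhaseSpace$, whence $\liminf\|\fbold-\fboldstar^{(n)}\|_\PhaseSpace^2 \ge \|\fbold\|_\PhaseSpace^2+c_\star^2$, so $\obold$ is again optimal. Orthogonality is then immediate: for a minimizer $\fboldstar\ne\obold$ and any $\gbold\in T_{\fboldstar}\Mrom$, pick a smooth curve $\gamma\colon(-\eps,\eps)\to\Mrom$ with $\gamma(0)=\fboldstar$ and $\gamma'(0)=\gbold$ (available from Lemma~\ref{lem:alphabold_bijective}), and differentiate $t\mapsto\|\fbold-\gamma(t)\|_\PhaseSpace^2$ at $t=0$.

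For uniqueness, I exploit that each $\Gammaalpha$ is a $\PhaseSpace$-isometry sending $\Mrom$ into itself. By homogeneity I normalize $\|\fbold\|_\PhaseSpace=1$. Given one minimizer $\fboldstar_1=c_1\Gamma_{\alphabold_1}\fbold_0$, I replace $\fbold$ by $\Gamma_{\alphabold_1}^{-1}\fbold$ to reduce to $\fboldstar_1=c_1\fbold_0$ with $|c_1-1|<\rho$. Any other minimizer $\fboldstar_2=c_2\Gammaalpha\fbold_0$ then satisfies $\|\fboldstar_1-\fboldstar_2\|_\PhaseSpace<2\rho$, so by Lemma~\ref{lem:alphabold_bijective} the parameters $(c_2,\alphabold)$ lie in a fixed small neighborhood of $(c_1,\obold)$ for $\rho$ small. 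Both $(c_1,\obold)$ and $(c_2,\alphabold)$ are critical points of
\begin{equation*}
\phi(c,\alphabold):=\|\fbold-c\Gammaalpha\fbold_0\|_\PhaseSpace^2 = \|\fbold\|_\PhaseSpace^2 - 2c\Braket{\fbold|\Gammaalpha\fbold_0}_\PhaseSpace + c^2.
\end{equation*}

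The conclusion follows from a quantitative nondegeneracy of the Hessian of $\phi$ at $(c_1,\obold)$. Differentiating the identity $\|\Gammaalpha\fbold_0\|_\PhaseSpace^2\equiv 1$ once and twice at $\alphabold=\obold$ produces cancellations that make the Hessian block-diagonal: the $c$-block equals $2$, and the $\alphabold$-block equals $2c_1^2 M_0 - 2c_1 R$, where $M_0$ is the positive-definite matrix of Lemma~\ref{lem:non_singular_matrix} and $R$ has entries $\Braket{\fbold-\fboldstar_1|\partial_{\alpha_i}\partial_{\alpha_j}\Gammaalpha\fbold_0}_\PhaseSpace$ at $\alphabold=\obold$, so $\|R\|\le C\|\fbold-\fboldstar_1\|_\PhaseSpace<C\rho$. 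Taking $\rho$ small enough keeps the Hessian uniformly positive definite, and the inverse function theorem applied to $\nabla\phi$ shows that $(c_1,\obold)$ is the unique critical point in the relevant neighborhood, forcing $\fboldstar_2=\fboldstar_1$. The principal obstacle is verifying the four escape-regime computations for $\Gamma_{\alphabold_n}\fbold_0\rightharpoonup\obold$ in the existence step, and securing the bound $\|R\|\le C\|\fbold-\fboldstar_1\|_\PhaseSpace$ uniformly in a neighborhood of $\fboldstar_1$; both reduce to concrete calculations with the explicit formulas for $\Gammaalpha\fbold_0$, with the positivity input supplied by Lemma~\ref{lem:non_singular_matrix}.
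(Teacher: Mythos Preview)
Your uniqueness argument and the paper's are variants of the same idea: both feed the positive-definiteness of $M_0$ from Lemma~\ref{lem:non_singular_matrix} into an inverse-function-theorem step. You treat $(c,\alphabold)$ jointly and show the Hessian of $\phi$ is positive definite; the paper first eliminates $c$ exactly---using orthogonality to get $c=c'=\sqrt{\|\fbold\|^2-\dist(\fbold,\Mrom)^2}$---and then runs a contraction on the $\alphabold$-gradient alone (its map $P$). Your Hessian computation is correct, including the block-diagonality (the off-diagonal $\partial_c\partial_{\alpha_i}\phi$ vanishes precisely because $\fbold-\fboldstar_1\perp T_{\fboldstar_1}\Mrom$).

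There is one genuine gap. You write that since $\|\fboldstar_1-\fboldstar_2\|_\PhaseSpace<2\rho$, ``by Lemma~\ref{lem:alphabold_bijective} the parameters $(c_2,\alphabold)$ lie in a fixed small neighborhood of $(c_1,\obold)$.'' Lemma~\ref{lem:alphabold_bijective} only asserts injectivity of the parameterization; it says nothing about continuity of the inverse, and $\Arom$ is not compact. This is exactly the step the paper isolates as the claim $|\alphabold'|\le C\|\fbold_0-\Gamma_{\alphabold'}\fbold_0\|_\PhaseSpace$, and proves in two stages: first, the decay $\langle\fbold_0\,|\,\Gamma_{\sigmabold}\fbold_0\rangle_\PhaseSpace\to 0$ as $|\sigmabold|\to\infty$ confines $\alphabold'$ to a bounded set; second, the Taylor expansion with leading term $M_0$ upgrades this to the quantitative bound. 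You already have both ingredients at hand---the escape-regime weak convergence $\Gamma_{\alphabold_n}\fbold_0\rightharpoonup\obold$ from your existence step is equivalent to the first, and $M_0>0$ is the second---so the gap is fillable, but you should not attribute it to Lemma~\ref{lem:alphabold_bijective}.

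On existence: the paper simply cites \cite{Negro18}, while you sketch a direct argument. Your sketch is sound; the one point to note is that ``$\liminf\|\fbold-\fboldstar^{(n)}\|^2\ge\|\fbold\|^2+c_\star^2$'' is in fact an equality (the cross term vanishes by weak convergence), so in the escape case the minimizing sequence cannot have $c_\star>0$, and $\obold$ is already the limit along that subsequence.
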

\begin{proof}
The existence of $\fboldstar$ and the property~\eqref{eq:Minimizer_Orthogonal} have been proved in Step~1 in the proof of Proposition~5.3 of~\cite{Negro18}. To establish uniqueness, we assume that~\eqref{eq:distance_small_norm} holds for a constant $\rho$ to be determined, and we suppose that there exist $\fboldstar$ and $\fbold_{\!\star}'$ in $\Mrom\setminus\{\obold\}$ such that 
\begin{equation}\label{eq:norm_is_distance}
	\begin{array}{cc}
	\fbold=\fboldstar+\fboldbot=\fboldstarprime+\fboldbotprime, &\text{where }	\norm{\fboldbot}_\PhaseSpace=\norm{\fboldbotprime}_\PhaseSpace=\dist(\fbold, \Mrom).
	\end{array}
\end{equation}
Our goal is to show that $\fboldstar=\fboldstarprime$. We consider $\alphabold, \alphabold'\in\Arom$ such that 
\begin{equation}\label{eq:fboldstar_fboldstarprime}
	\begin{array}{cccc}
	\fboldstar=c\Gammaalpha \fbold_0& \text{and} & \fbold_{\!\star}'= c'\Gamma_{\!\alphabold'}\fbold_0, &\text{where } c=\norm{\fboldstar}_\PhaseSpace, c'=\norm{\fboldstarprime}_\PhaseSpace,
	\end{array}
\end{equation}
and, replacing $\fbold$ with $\Gamma_{\!\alphabold}^{-1}\fbold$ if needed, we can assume that $\Gammaalpha=\Gamma_{\!\obold}$. The orthogonality~\eqref{eq:Minimizer_Orthogonal} implies that $\Braket{\fboldbot|\fboldstar}_\PhaseSpace=\Braket{\fboldbotprime|\fboldstarprime}_\PhaseSpace=0$, so using~\eqref{eq:norm_is_distance} we can expand $\norm{\fbold}_\PhaseSpace^2$, yielding
\begin{equation}
	c=c'=\norm{\fboldstar}_\PhaseSpace=\norm{\fboldstarprime}_\PhaseSpace = \sqrt{\norm{\fbold}_\PhaseSpace^2- \dist(\fbold, \Mrom)^2}.
\end{equation}
It follows from these considerations that we can rewrite~\eqref{eq:norm_is_distance} as
\begin{equation}\label{eq:two_projections}
	\frac{\fbold}{c}=\fbold_0 + \frac{\fboldbot}{c} = \Gamma_{\!\alphabold'} \fbold_0+\frac{\fboldbotprime}{c},
\end{equation}
from which we infer the estimate 
\begin{equation}\label{eq:bound_alphaprime}
	\norm{\fbold_0-\Gamma_{\!\alphabold'}\fbold_0}_\PhaseSpace \le \frac{2\dist(\fbold, \Mrom)}{\sqrt{\norm{\fbold}_\PhaseSpace^2-\dist(\fbold, \Mrom)^2}}\le \frac{2\rho}{\sqrt{1-\rho^2}},
\end{equation}
and analogously,
\begin{equation}\label{eq:bound_difference_fbold_fboldzero}
	\norm{\fbold/c- \fbold_0}_\PhaseSpace\le \frac{\rho}{\sqrt{1-\rho^2}}.
\end{equation}
To finish the proof, it will suffice to show that $\alphabold'=\obold$. 

As a first step, we claim that
\begin{equation}\label{eq:alphaprime_bound}
	\abs{\alphabold'}\le C\norm{\fbold_0-\Gamma_{\!\alphabold'}\fbold_0}_\PhaseSpace,
\end{equation}
for a $C>0$. To prove this, we begin by squaring the left-hand side of \eqref{eq:bound_alphaprime},
\begin{equation}\label{eq:norm_expansion}
	\norm{\fbold_0-\Gamma_{\!\alphabold'}\fbold_0}_\PhaseSpace^2=2-2\Braket{\fbold_0 |\Gamma_{\!\alphabold'}\fbold_0},_\PhaseSpace
\end{equation}
so that $$\Braket{\fbold_0 |\Gamma_{\!\alphabold'}\fbold_0}_\PhaseSpace \ge \tfrac{1-3\rho^2}{1-\rho^2}.$$ Assuming, as we may,   that $\rho<1/\sqrt3$, the right-hand side of this inequality is strictly positive. Now, $\Braket{\fbold_0 |\Gamma_{\!\sigmabold}\fbold_0}_\PhaseSpace\to 0$ as $\abs\sigmabold\to \infty$; see for example~\cite[Lemmas 3.2 and 4.1]{Ramos12}. Thus, there must be a $C(\rho)>0$ such that $\lvert\alphabold'\rvert\le C(\rho)$. 

We can then assume, for a contradiction, that 
\begin{equation}\label{eq:contradiction_sequence}
	\begin{array}{cc}
	\Ds \frac{\norm{\fbold_0-\Gamma_{\!\alphabold_n} \fbold_0}_\PhaseSpace^2}{\abs{\alphabold_n}^2} \to 0, &\text{for a sequence }\alphabold_n\in\Arom,\ \abs{\alphabold_n}\le C(\rho).
	\end{array}
\end{equation}
There exists $\alphabold_0\in\Arom$ such that $\alphabold_n\to \alphabold_0$ up to a subsequence. If $\lvert\alphabold_0\rvert\ne 0$, then~\eqref{eq:contradiction_sequence} would imply that $\norm{\fbold_0-\Gamma_{\!\alphabold_0}\fbold_0}_\PhaseSpace=0$, but this is ruled out by Lemma~\ref{lem:alphabold_bijective}. The only remaining possibility is that $\abs{\alphabold_n}\to 0$. We record now two identities that hold for all $\alphabold\in \Arom$;
\begin{equation}\label{eq:unitary_trick}
 	\Braket{\Gammaalpha \fbold_0|\partial_{\alpha_i}\Gammaalpha \fbold_0}_\PhaseSpace= \partial_{\alpha_i}\tfrac12 \norm{\Gamma_{\!\alphabold} \fbold_0}^2_\PhaseSpace=0,
\end{equation}
where we used that $\Gammaalpha$ is unitary and
\begin{equation}\label{eq:unitary_trick_two}
	-\Braket{\Gammaalpha \fbold_0 | \partial_{\alpha_i}\partial_{\alpha_j}\Gammaalpha \fbold_0}_\PhaseSpace = \Braket{ \partial_{\alpha_i}\Gammaalpha \fbold_0 | \partial_{\alpha_j}\Gammaalpha \fbold_0}_{\PhaseSpace},
\end{equation}
which is obtained from~\eqref{eq:unitary_trick} by differentiating. Using these, we compute
\begin{equation}\label{eq:Ratio_Expansion}
	\norm{\fbold_0-\Gamma_{\!\alphabold}\fbold_0}_{\PhaseSpace}^2=2\sum_{i,j=1}^9\alpha_i\alpha_j\left.\Braket{\partial_{\sigma_i}\Gamma_{\!\sigmabold}\fbold_0 | \partial_{\sigma_j}\Gamma_{\!\sigmabold}\fbold_0}\right|_{\sigmabold=\obold}  + O(\abs{\alphabold}^3).
\end{equation}
Since the coefficients of the quadratic term are those of the matrix $M_0$ defined in~\eqref{eq:mat_scalprods}, the fact that $\abs{\alphabold_n}\to 0$ implies
\begin{equation}\label{eq:limit_ratio}
	0=\lim_{n\to \infty} \frac{\norm{\fbold_0-\Gamma_{\!\alphabold_n}\fbold_0}_{\PhaseSpace}^2}{\abs{\alphabold_n}^2}\ge2\lambda_0 >0, 
\end{equation}
where $\lambda_0$ is the minimal eigenvalue of $M_0$, which is strictly positive because of Lemma~\ref{lem:non_singular_matrix}. We have reached the desired contradiction and proved~\eqref{eq:alphaprime_bound}.

 To conclude the proof that $\alphabold'=\obold$, we define $\Fcal\colon \Arom \times \PhaseSpace\to \R^{9}$ by
\begin{equation}\label{eq:FcalFunction}
	\Fcal(\alphabold, \gbold):=\begin{bmatrix} \Ds
	\Braket{  \Gammaalpha\fbold_0  -\gbold|\partial_{\alpha_i}\Gammaalpha\fbold_0}_\PhaseSpace
	\end{bmatrix}_{i=1\ldots9}.
\end{equation}
By~\eqref{eq:Minimizer_Orthogonal}, $\Gamma_{\!\alphabold'}\fbold_0-\fbold/c=\fboldbotprime/c$ is orthogonal to the tangent space at $c\Gamma_{\!\alphabold'}\fbold_0$, which contains all the derivatives $\partial_{\alpha_i}\Gammaalpha \fbold_0$ at $\alphabold'$, so $\Fcal(\alphabold', \fbold/c)=0$. In the same way we see that $\Fcal(\obold, \fbold/c)=0$. 

Now, obviously, $\Fcal(\obold, \fbold_0)=0$. Using the identities~\eqref{eq:unitary_trick} and~\eqref{eq:unitary_trick_two} as before, we find that the Jacobian matrix $D_\alphabold \Fcal= \begin{bmatrix} \partial_{\alpha_j}\Fcal_i\end{bmatrix}_{i, j=1\ldots 9}$ at $(\obold, \fbold_0)$ is 
\begin{equation}\label{eq:Jacobian_Origin}
	D_\alphabold\Fcal(\obold, \fbold_0)=M_0,
 \end{equation}
so that, in particular, it is nonsingular. We can thus rewrite the identity $\Fcal(\alphabold', \fbold/c)=0$ as a fixed point relation;
\begin{equation}\label{eq:fixpt_eqq}
	\begin{array}{cc}
	\alphabold'=P(\alphabold', \fbold/c), & \text{where }P(\alphabold, \gbold):=\alphabold-D_\alphabold\Fcal(\obold, \fbold_0)^{-1}\Fcal(\alphabold, \gbold),
	\end{array}
\end{equation}
and the function $P$ is such that $D_\alphabold P(\obold, \fbold_0)=0$. Thus, there exists an absolute constant $\eps>0$ such that 
\begin{equation}\label{eq:matrixnorm_is_small}
	\begin{array}{cc}
		\norm{D_\alphabold P(\alphabold, \gbold)}\le\frac12, & \text{if }\abs{\alphabold}<\eps\text{ and }\norm{\gbold-\fbold_0}_\PhaseSpace <\eps.
	\end{array}
\end{equation}
Here, as is usual, the matrix norm is $\norm{M}:=\sup\Set{ \abs{Mx}/\abs{x} : x\in\R^9}$. We now require, as we may, that $\rho$ satisfies the additional condition 
\begin{equation}\label{eq:rho_bound}
	\frac{\rho}{\sqrt{1-\rho^2}}\le \frac{\eps}{2C},
\end{equation}
so that, combining~\eqref{eq:bound_alphaprime} and~\eqref{eq:alphaprime_bound}, we see that $\lvert t \alphabold'\rvert <\eps$ for all $t\in[0, 1]$, and moreover, $\norm{\fbold/c-\fbold_0}_\PhaseSpace<\eps$ by~\eqref{eq:bound_difference_fbold_fboldzero}. Thus $\norm{D_\alphabold P(t\alphabold', \fbold/c)}\le\tfrac12$, and from
\begin{equation}\label{eq:towards_alphaboldprime_vanish}
	\alphabold'=P(\alphabold', \fbold/c)=\int_0^1\frac{d}{dt}P(t\alphabold', \fbold/c)\, dt=\int_0^1 D_\alphabold P(t\alphabold', \fbold/c)\alphabold'\, dt, 
\end{equation}
where we used that $P(\obold, \fbold/c)=\obold$, we infer that
\begin{equation}\label{eq:estimate_alphaboldprime}
	\abs{\alphabold'}\le \int_0^1 \norm{D_\alphabold P(t\alphabold', \fbold/c)}\abs{\alphabold'}\, dt \le \frac12\abs{\alphabold'},
\end{equation}
so that $\lvert \alphabold' \rvert=0$, completing the proof.
\end{proof}
We now give a proof of Lemma~\ref{lem:alphabold_bijective}. We need to show that $c\Gammaalpha\fbold_0=c'\Gamma_{\!\alphabold'}\fbold_0$ implies that $c=c'$ and $\alphabold=\alphabold'$. Now, the first identity is immediate, as $c=\norm{c\vbold(0,\cdot)}_\PhaseSpace=\norm{c'\vbold'(0, \cdot)}_\PhaseSpace=c'$. Reasoning like in the proof of Proposition~\ref{prop:Metric_Projection}, we can also assume that $\Gamma_{\!\alphabold'}=\Gamma_\obold$. We are thus reduced to prove that
\begin{equation}\label{eq:injectivity}
	v_\alphabold= v_\obold\quad \Rightarrow\quad \alphabold=\obold,
\end{equation} 
where
\begin{equation}\label{v_explicit}
	\begin{array}{cc}
		v_\alphabold(t, x):=\lambda v_\theta(\lambda L^\beta(t-t_0, x-x_0)),& \alphabold=(\lambda, \theta, \beta, t_0, x_0)
	\end{array}
\end{equation}
and $v_\theta=S(\Ph_\theta \fbold_0)$. 
	We recall the energy-momentum relation 
\begin{equation}\label{eq:energy_momentum}
	( E(v_\alphabold), \Pbold(v_\alphabold) )=\lambda L^{-\beta}(E(v_\obold), \Pbold(v_\obold)),
\end{equation}
where
\begin{equation}\label{eq:energy_and_momentum_definition}
	\begin{array}{cc}
		\Ds E(v):=\int_{\R^3} \Tonde{\abs{\nabla v}^2+ (\partial_t v)^2}dx, &\Ds \Pbold(v):=\int_{\R^3}\partial_t v\,\nabla v\, dx;
	\end{array}
\end{equation}
see, for example,~\cite[Remark~2.5]{KiStVi12}. Since $v_\obold$ is radial, $\Pbold(v_\obold)=0$. Now, since $v_\alphabold=v_\obold$, then obviously $(E(v_\alphabold), \Pbold(v_\alphabold))=(E(v_\obold), \Pbold(v_\obold))$, so~\eqref{eq:energy_momentum} gives
\begin{equation}\label{eq:linear_eqs}
	\begin{array}{cccc}
		\lambda \gamma E(v_\obold) = E(v_\obold), & 
		 \lambda \gamma \beta E(v_\obold)=0, 
	&
	\text{where }\gamma:=(1-\abs\beta^2)^{-1/2},
	\end{array}
\end{equation}
from which we infer that $\lambda=1$ and $\beta=0$. To conclude, we equate the spatial Fourier transforms of $v_\theta(t-t_0, \cdot-x_0)$ and $v_0(t, \cdot)$;
\begin{equation}\label{eq:Fourier_vtheta}
	\begin{array}{cc}
	\cos((t-t_0)\abs\xi +\theta)e^{-ix_0\cdot \xi}\hat{f}_0(\xi)=\cos(t\abs\xi)\hat{f}_0(\xi), &\forall \xi\in \R^3,\,t\in\R,
	\end{array}
\end{equation}
where $f_0:=C(1+\abs{\cdot}^2)^{-1}$, so $\hat{f}_0(\xi)=Ce^{-\abs{\xi}}/\abs\xi$ for an irrelevant $C>0$, and in particular, $\hat{f}_0(\xi)\ne 0$ almost everywhere. This is only possible if $t_0=0, x_0=0$ and $\theta=0$ modulo $2\pi$, completing the proof.

\printbibliography
\end{document}